\title{Bounding the separable rank via polynomial optimization}

\author{Sander Gribling\thanks{Partially supported by SIRTEQ-grant QuIPP. \texttt{gribling@irif.fr}}
		\\
        IRIF, Universit\'e de Paris
        \and
		Monique Laurent\thanks{\texttt{M.Laurent@cwi.nl}} \\ 
		CWI and Tilburg University
		\and
        Andries Steenkamp\thanks{Supported by the European Union's EU Framework Programme for Research and Innovation Horizon 2020 under the Marie Sk\l{}odowska-Curie Actions Grant Agreement No 813211 (POEMA).\texttt{Andries.Steenkamp@cwi.nl} } \\
        CWI
}
\date{} 

\documentclass[11pt]{article}

\usepackage{fullpage}
\usepackage[utf8]{inputenc}
\usepackage[english]{babel}
\usepackage{amsmath}
\usepackage{amsthm}
\usepackage{amssymb}
\usepackage{blkarray}
\usepackage{mathtools}
\usepackage{mathrsfs}
\usepackage{xcolor}

\usepackage{booktabs}
\setlength{\heavyrulewidth}{1.5pt}
\setlength{\abovetopsep}{4pt}

\usepackage[bookmarksnumbered,hypertexnames=false,colorlinks=false,linkcolor=blue,urlcolor=blue,citecolor=blue,anchorcolor=green,breaklinks=true,pdfusetitle]{hyperref}
\usepackage[capitalize]{cleveref}

\usepackage{tikz,pgfplots}

\pgfplotsset{every axis/.style={scale only axis}}

\newtheorem{theorem}{Theorem}
\newtheorem{lemma}[theorem]{Lemma}
\newtheorem{corollary}[theorem]{Corollary}
\newtheorem{claim}[theorem]{Claim}
\newtheorem{remark}[theorem]{Remark}
\newtheorem{definition}[theorem]{Definition}
\newtheorem{proposition}[theorem]{Proposition}

\newcommand{\N}{\mathbb{N}} 		  
\newcommand{\R}{\mathbb{R}} 	      
\newcommand{\C}{\mathbb C} 			  

\newcommand{\calS}{\mathcal {S}} 
\newcommand{\calR}{{\mathcal R}}
\newcommand{\calH}{\mathcal{H}}
\renewcommand{\H}{\mathcal H} 
\newcommand{\calV}{{\mathcal V}}

\newcommand{\cone}{\mathrm{cone}}
\newcommand{\conv}{\mathrm{conv}}
\newcommand{\Span}[1]{\mathrm{Span}\left\{#1\right\}}

\newcommand{\sep}{\mathcal{SEP}}
\newcommand{\SEP}{\mathcal{SEP}}
\newcommand{\CP}{\mathcal{CP}}

\newcommand{\calM}{\mathcal {M}} 
\newcommand{\calW}{\mathcal{W}}    %
\newcommand{\calL}{\mathcal L}
\newcommand{\cL}{\mathcal L}
\newcommand{\sD}{\mathscr{D}} 
\newcommand{\bbS}{{\mathbb S}}

\newcommand{\hS}{\Sigma} 

\newcommand{\Sym}{{\text{\rm Sym}}}

\newcommand{\monoV}{[\bx,\by,\bbx,\bby]} 
\newcommand{\polyS}{\C[\bx,\by,\bbx,\bby]} 

\newcommand{\set}{{\mathcal{R}}}

\newcommand{\dps}{\mathcal{DPS}} %
\newcommand{\xisep}[1]{\xi_{#1}^{\mathrm{sep}}(\rho) } 
\newcommand{\xidsep}[1]{\xi_{#1}^{\mathrm{sep}}(\rho)}
\newcommand{\xib}[2]{\xi^{\mathrm{#1}}_{#2}}
\newcommand{\xidsepR}[1]{\xi_{#1}^{{\mathrm{sep}},\R}(\rho)}

\newcommand{\xr}{\bx_{\Re}}	   		  
\newcommand{\xim}{\bx_{\Im}}	   	  

\newcommand{\Cxox}{\C\mxox}   		  
\newcommand{\mxox}{[\bx,\bbx]} 		  
\newcommand{\mxrxi}{[\xr,\xim]} 	  
\newcommand{\Rxrxi}{\R\mxrxi}   	  
\newcommand{\Cxoxh}{\C\mxox^h}        
\newcommand{\Lr}{L^\R}   	  		  
\newcommand{\calLr}{\calL^\R} 		  

\newcommand{\Sre}{S_{\Re}} 			          
\newcommand{\sDr}{\mathscr{D}^{\R}} 	      

\newcommand{\qo}[1]{ \overline{#1} } 

\newcommand{\wh}{\widehat }
\newcommand{\wtl}{\widetilde }
\newcommand{\abs}[1]{\left|#1\right|}
\newcommand{\ol}{\overline}

\newcommand{\bx}{\mathbf{x}} 
\newcommand{\bbx}{\qo{\mathbf{x}}} 
\newcommand{\by}{\mathbf{y}} 
\newcommand{\bby}{\qo{\mathbf{y}}} 

%
\newcommand{\ba}{\mathbf{a}} 
\newcommand{\bb}{\mathbf{b}} 
\newcommand{\bi}{\mathbf{i}}
\renewcommand{\i}{\mathbf{i}}


\newcommand{\ox}{\overline{x}}

\newcommand{\cxbx}{[\bx,\bbx]} 
\newcommand{\cxbxs}{\cxbx^*}   
\newcommand{\Ccxbx}{\C\cxbx}   
\newcommand{\Ccxbxs}{\C\cxbxs} 
\newcommand{\Ccxbxh}{\Ccxbx^h} 

\newcommand{\avec}[1]{ \vec{{\mathbf{#1}}} } 

\newcommand{\tsep}{\tau_\mathrm{sep}}
\newcommand{\tausep}{\tau_{\mathrm{sep}}}

\newcommand{\taucp}{\tau_{\mathrm{cp}}}

\renewcommand{\i}{\mathbf{i}\,}
\renewcommand{\Re}{\mathrm{Re}}
\renewcommand{\Im}{\mathrm{Im}}

\newcommand{\haL}{\widehat{L}}

\newcommand{\seprank}{\mathrm{rank_{sep}}}
\newcommand{\cprank}{\mathrm{rank_{cp}}}
\newcommand{\rank}{\mathrm{rank}}
\newcommand{\birank}{\mathrm{birank}} 
\newcommand{\seprankR}{\mathrm{rank}_{\mathrm{sep}}^\R}


\newcommand{\Tr}{\mathrm{Tr}} 

\newcommand{\Mnc}{M^{\mathrm{nc}}} 
\newcommand{\Gr}{{G_{\rho}}} 	   
\DeclarePairedDelimiter\inip{\langle}{\rangle}  

\begin{document}
\maketitle

\begin{abstract}
We investigate questions related to the set $\sep_d$ consisting of the linear maps $\rho$ acting on $\C^d\otimes \C^d$ that can be written as a convex combination of rank one matrices of the form $xx^*\otimes yy^*$. Such maps are known in  quantum information theory as the separable bipartite states, while nonseparable states are called entangled. In particular we introduce
bounds for the separable rank $\seprank(\rho)$, defined as the  smallest number of rank one states  $xx^*\otimes yy^*$ entering the decomposition of a  separable state $\rho$. Our approach relies on the moment method and yields a hierarchy of semidefinite-based lower bounds, that converges to a parameter $\tausep(\rho)$, a natural convexification of the combinatorial parameter $\seprank(\rho)$.  A distinguishing feature is exploiting the positivity constraint $\rho-xx^*\otimes yy^* \succeq 0$ to impose positivity of a polynomial matrix localizing map, the dual notion of the notion of sum-of-squares polynomial matrices. Our approach extends naturally to the multipartite setting and to
the real separable rank, and it permits strengthening some known bounds for the completely positive rank.
In addition, we indicate how the moment approach also applies to define hierarchies of semidefinite relaxations for the set $\sep_d$ and permits to give new proofs,  using only tools from moment theory, for  convergence results on the DPS hierarchy from (A.C. Doherty, P.A. Parrilo and F.M. Spedalieri. Distinguishing separable and entangled states. Phys. Rev. Lett. 88(18):187904, 2002). 
\end{abstract} 

%

\section{Introduction}\label{Intro}

The main object of study in this paper is the following matrix cone
\begin{equation}\label{SEP}
\SEP_d := \cone\{xx^* \otimes yy^* \colon x\in \C^d,~y\in \C^d,~\|x\| = \|y\| = 1 \} \subseteq \H^d \otimes \H^d\simeq \H^{d^2},
\end{equation}
sometimes also denoted  as $\SEP$ when the dimension $d$ is not important. Throughout $\H^d$ denotes the cone of complex Hermitian $d\times d$  matrices and $\H^d_+$ is the subcone of Hermitian positive semidefinite matrices. Matrices in $\H^d_+$ are also known as {\em unnormalized states} and  matrices in $\H^d_+$ with trace~1 are called  {\em normalized states}.
The cone $\sep_d$ is of particular interest in the area of quantum information theory: its elements are known as the {\em (unnormalized, bipartite)  separable states} on  $\H^d \otimes \H^d$ and 
a positive semidefinite matrix $\rho \in \H^d \otimes \H^d$ that does not belong to $\SEP_d$ is said to be  \emph{entangled}. 
Entangled states can be used to observe quantum, non-classical behaviors that may be displayed by two physically separated quantum systems, as  already pointed out in the early work \cite{Einstein}.
Entanglement is  now recognized as an additional important resource that can be used in quantum information processing to carry out a great variety of tasks such as quantum computation, quantum communication, quantum cryptography and teleportation (see, e.g.,  \cite{Nielsen-Chuang,Watrous} and references therein).
Therefore, deciding whether a state is separable or entangled 
is a question of fundamental interest in quantum information theory.  Gurvits~\cite{Gurvits} has shown that 
the (weak) membership problem for the set $\sep_d\cap\{\rho: \Tr(\rho)=1\}$ is an NP-hard problem. In addition, the problem was shown to be  strongly NP-hard in~\cite{Gharibian10}.
Hence  it is important to have tractable criteria for  separability or entanglement of quantum states. Throughout, we restrict for simplicity to the case of bipartite states, acting on two copies of $\C^d$, but the treatment extends naturally to the case of $m$-partite states that act on $\C^{d_1}\otimes \ldots \otimes \C^{d_m}$ with $m\ge 2$ and $d_1,\ldots,d_m$ possibly distinct. We will return below to the question of testing separability, but first we introduce the relevant notion of separable rank, which plays a central role in this paper.

\subsubsection*{The separable rank} 
In this work we consider the following problem: given a state $\rho \in \SEP_d$, what is the smallest integer $r \in \N$ such that there exist vectors $a_1,\ldots,a_r, b_1,\ldots,b_r \in \C^d$ for which 
\begin{equation}\label{eqrho}
\rho = \sum_{\ell=1}^r a_\ell a_\ell^* \otimes b_\ell b_\ell^*.
\end{equation}
This smallest integer $r$ is called the \emph{separable rank} of $\rho$ and denoted as $\seprank(\rho)$. One  sets $\seprank(\rho)=\infty$ when $\rho$ is entangled. The separable rank has been previously studied, e.g., in~\cite{Uhlmann98,DTT00,Chen_2013a} (where it is called the \emph{optimal ensemble cardinality} or the  \emph{length} of $\rho$) and it can be seen as a `complexity measure'  of the state (with an infinite rank for entangled states).  Easy bounds on the separable rank are $\rank(\rho)\le \seprank(\rho)\le \rank(\rho)^2$, where the left most inequality can be strict (see \cite{DTT00}) and the right most one follows using Caratheodory's theorem \cite{Uhlmann98}.
We approach the problem of determining the separable rank from the moment perspective. 
We use the observation that, if $\seprank(\rho)=r$ and $\rho$ admits the decomposition (\ref{eqrho}), then 
 the sum of the $r$ atomic measures at the vectors $(a_\ell, b_\ell)\in \C^d\times \C^d$ is a measure $\mu$ whose expectation $\int 1d\mu$ is equal to $r$ and whose fourth-degree moments correspond to the entries of $\rho$. Moreover, as we will see later, this measure may  be assumed to be  supported on the semi-algebraic set
\begin{equation}\label{eqscaling}
\calV_\rho= \{(x,y) \in \C^d\times \C^d : \|x\|_\infty^2,\|y\|_\infty^2\leq \sqrt{\rho_{\max}},\ xx^* \otimes yy^* \preceq \rho\},
\end{equation}
where $\rho_{\max}$ denotes the largest diagonal entry of $\rho$. Then we obtain a lower bound on the separable rank of $\rho$, denoted $\tsep(\rho)$,  by minimizing the expectation $\int 1 \, d\mu$ over all  measures $\mu$  that are supported on $\calV_\rho$ and have fourth-degree moments corresponding to entries of $\rho$  (see~\cref{eq:tausep}).
Hence, here we view  the separable rank as a moment problem over the product of two balls. This view will enable us to design a hierarchy of tractable semidefinite based parameters, denoted  $\xidsep{t}$. These parameters provide lower bounds on the separable rank and converge to $\tsep(\rho)$ (see Section~\ref{sec: poly sep}).

In view of the definition of $\sep_d$ in \cref{SEP}, one may also view separability of a state $\rho$ as a moment problem on the bi-sphere $\bbS^{d-1}\times \bbS^{d-1}$, where  $\bbS^{d-1}=\{x\in\C^d: \|x\|=1\}$ denotes the (complex) unit sphere. 
However, this approach  does {\em not} (straightforwardly) lead to  bounds on the separable rank. Indeed, for a measure $\mu$ on the bi-sphere whose fourth-degree moments correspond to entries of $\rho$, we necessarily have~$\int 1 \, d\mu = \Tr(\rho)$. To get bounds on the separable rank, it is thus {crucial} to use another scaling for the points $(a_\ell,b_\ell)$ entering a separable decomposition of $\rho$ as, for instance, the scaling used in \cref{eqscaling}, but other scalings are possible as indicated in Section \ref{sec:tausep}. 

Our approach extends to several other settings, in particular, to the case of multipartite separable states (when $\rho$ acts on the tensor product of more than two spaces) and    to the case of real states (instead of complex valued ones). It can also be adapted to the notion of \emph{mixed separable rank}, where one tries to find factorizations of the form $\rho = \sum_{\ell=1}^r A_\ell \otimes B_\ell$ with $A_\ell, B_\ell$  Hermitian positive semidefinite matrices and $r$ as small as possible. In~\cite{de_las_Cuevas_2020} it was shown that, if $\rho$ is a diagonal matrix, then its  mixed separable rank is equal to the nonnegative rank of the associated $d \times d$ matrix consisting of the diagonal entries of $\rho$. Vavasis \cite{Vav09} has shown  that computing the nonnegative rank of a matrix is an NP-hard problem and, more recently, Shitov~\cite{Shi16b} showed   $\exists\R$-hardness of this problem. 
Hence computing the mixed separable rank has the same hardness complexity status as the nonnegative rank. 
Determining the complexity status of the separable rank remains open, but there is no reason to expect that it should be any easier than the mixed separable rank.

When using moment methods one typically works with measures supported on semi-algebraic sets, i.e., sets described by polynomial inequalities on the variables. In our approach this is also the case. Indeed  the set $\calV_\rho$ in \cref{eqscaling} is semi-algebraic since one can encode the condition $xx^* \otimes yy^* \preceq \rho$ by requiring all principal minors of $\rho-xx^* \otimes yy^*$ to be nonnegative. This would however lead to a description of the set $\calV_\rho$ with a number of polynomial constraints that is exponential in $d$. Instead, we will directly exploit the  constraint $\rho- xx^* \otimes yy^*\succeq 0$, which is of the form $G(x) \succeq 0$ for some polynomial matrix $G(x)$ (i.e., with entries polynomials in $x,\overline x$). This constraint enables us to impose positivity constraints on polynomial matrix localizing maps, a matrix analog of the usual scalar localizing maps used in the moment method (see Section \ref{sec:matrix-setting}).  
Such polynomial matrix localizing constraints can also be used to bound the completely positive rank of a completely positive matrix, and we will show that this permits to strengthen some known bounds on the completely positive rank from~\cite{GdLL17a} (see \cref{sec:CP}).

Our hierarchy of bounds $\xidsep{t}$ on the separable rank can also be used to detect entanglement. Indeed, 
as mentioned above, by Caratheodory's theorem,   the separable rank of a  state  $\rho \in \sep_d$ can be  upper bounded, e.g., by $\rank(\rho)^2\le d^4$.
We can leverage this fact and the asymptotic convergence of our hierarchy of lower bounds to detect entanglement: a state $\rho\in\H^d\otimes \H^d$ is entangled if and only if $\xidsep{t}>\rank(\rho)^2$ for some integer $t\ge 1$, i.e., there is a level of our hierarchy which is infeasible or   provides a  lower bound on $\seprank(\rho)$ which is strictly larger than Caratheodory's bound.  In addition, a certificate of entanglement is then provided by the dual semidefinite program. Hence our hierarchy of semidefinite parameters $\xidsep{t}$ can  also be used to provide a type of entanglement witnesses (see \cref{sec:mem}).

\subsubsection*{The Doherty-Parrilo-Spedalieri (DPS) hierarchy for $\sep$} 
As mentioned above a fundamental problem in quantum information theory is to have efficient criteria for checking separability or entanglement of quantum states. A second main contribution of our work concerns a hierarchy of outer approximations to the set $\sep$ that we describe now. Doherty, Parrilo, and Spedalieri~\cite{DPS04} designed what is now known as the \emph{DPS hierarchy}, a hierarchy of outer approximations $\dps_{1,t}$ ($t \geq 1$) for the set $\SEP$. It is based on the principle of {\em state extension}: if $\rho:=\sum_\ell \lambda_\ell x_\ell x_\ell^* \otimes y_\ell y^*_\ell\in\sep_d$ with $\lambda_\ell\ge 0$ then, for any integer $t\ge 1$,  $\rho$ admits an extension $\rho_{1,t} := \sum_\ell \lambda_\ell x_\ell x_\ell^* \otimes (y_\ell y^*_\ell)^{\otimes t}$ acting on $\C^d\otimes (\C^d)^{\otimes t}$. The state $\rho$ can be recovered from its extension $\rho_{1,t}$ by tracing out $t-1$ of the copies of the second space and the extension $\rho_{1,t}$ satisfies several natural   conditions such as symmetry (under permuting the $t$ copies of the second register) and the so-called {\em positive partial transpose (PPT)} criterion from \cite{Horodecki_1996} (which states that taking the transpose of some of the copies preserves positive semidefiniteness). The relaxation  $\dps_{1,t}$ consists of those $\rho$ for which a state $\rho_{1,t}$ exists satisfying these necessary conditions.  Here the state extension is one-sided (since one extends only in the $y$-direction); the two-sided analog (in both $x$- and $y$-directions) has also been considered, leading to the hierarchy $\dps_{t,t}\subseteq \dps_{1,t}$ (see Section~\ref{sec: state extension perspective} for details).   For fixed $t$, deciding membership in $\dps_{1,t}$ (or $\dps_{t,t}$) boils down to testing feasibility of a semidefinite program of size polynomial in $d$.
The DPS hierarchy is {\em complete}, in the sense that we have  equality: $\bigcap_{t\ge 1}\dps_{1,t}=\sep_d$ \cite{DPS04}.

One can also interpret the set $\sep$ in the language of moments of distributions on the bi-sphere: $\rho$ is separable if there exists an atomic measure on the bi-sphere whose fourth-degree moments agree with $\rho$ (see, e.g., \cite{dressler2020separability,HNW17,Li_2020}). Another main contribution in this paper will be to make the links between this moment approach and the DPS hierarchy more apparent. These links enable us to give an alternative proof of completeness for the DPS hierarchy that is based on the theory of positive-operator valued measures. In contrast, existing proofs rely on other tools such as quantum de Finetti theorems or sums of squares. Indeed one can also design approximation hierarchies  for $\sep_d$, starting from its definition in \cref{SEP} and applying the moment approach to the bi-sphere $\bbS^{d-1}\times \bbS^{d-1}$.
Depending on the degrees that are allowed in the $x,\ol x$ variables and the $y,\ol y$ variables, this  leads to  several possible variants of relaxations for $\sep_d$ that we explore in Section~\ref{sec:momentDPS},  denoted there as $\set_{t}$ (when the full degree is at most $2t$), $\set_{t,t}$ (when the degree in $x,\ol x$ is at most $2t$ and the same for the degree in $y,\ol y$) and $\set_{1,t}$ (when the degree in $x,\ol x$ is at most 2 and the degree in $y,\ol y$ is at most $2t$).
We provide a convergence proof for each of these hierarchies (i.e., show their completeness) using tools from the moment method (i.e., existence of an atomic representing measure under certain positivity conditions), which we apply to the setting of matrix polynomials for the hierarchy $\set_{1,t}$ (see \cref{sec:convergenceR1t}).
In addition we show that the hierarchy  $\set_{1,t}$ (resp., $\set_{t,t}$) coincides with the DPS hierarchy $\dps_{1,t}$ (resp., $\dps_{t,t}$). Therefore we offer a new convergence proof for the DPS hierarchy that is based on the moment method.

\subsubsection*{Related literature on approximation hierarchies for $\sep$}

There is a vast literature about the set $\sep$ of separable states and approximations thereof (such as the DPS hierarchy), so we only mention here some of the  results that are most relevant to this paper.
The PPT criterion, introduced in \cite{Peres1996,Horodecki_1996}, is a necessary condition for separability. While it was shown to be sufficient to ensure separability of bipartite states acting on $\C^2\otimes \C^3$ \cite{WORONOWICZ1976165}, it is in general not sufficient for separability of states acting on larger dimensional spaces (see, e.g.,  \cite{Horodecki_1997,WORONOWICZ1976165}). In fact it has been shown that no semidefinite representation exists for $\sep_d$ when $d\ge 3$ \cite{Fawzi2021}.
As mentioned above, the authors in \cite{DPS04} use symmetric state extensions and the PPT conditions to define the hierarchy $\dps_{1,t}$ ($t \geq 1$). They show it to be complete (i.e.,  $\cap_{t \geq 1} \dps_{1,t} = \sep$) using the quantum de Finetti theorem from  \cite{Caves-Fuchs-Schack} (note that this completeness proof in fact does not use the PPT conditions).

Navascues, Owari and Plenio \cite{NOP09} show a quantitative result on the convergence of the sets $\dps_{1,t}$ to $\sep_d$. Consider  $\rho\in \dps_{1,t}$, whose membership is certified by the extended state  $\rho_{1,t}$ acting on $\C^d\otimes (\C^d)^{\otimes t}$, and let  $\rho_1\in \H^d$ be obtained by tracing out the part of $\rho_{1,t}$ that acts on $(\C^d)^{\otimes t}$; then $\rho_1\otimes I_d$ is clearly separable. In \cite{NOP09} it is shown that 
\begin{equation}\label{eqrhot}
\tilde \rho:= (1-\epsilon)\rho+ \epsilon \big(\rho_1\otimes {I_d\over d}\big)\in \sep_d \ \ \text{  where } \epsilon =O\big(\big({d\over t}\big)^2\big);
\end{equation}
that is, by moving $\rho$ in the direction of $\rho_1\otimes I_d/d$ by $\epsilon= O\big(\big({d\over t}\big)^2\big)$, one finds a separable state.  

{An \emph{entanglement witness} for a state $\rho$ is any certificate that certifies $\rho \not \in \SEP_d$.
 One way to obtain such an entanglement witness  is to exhibit one of the constraints defining a relaxation of $\sep_d$ (such as $\dps_{1,t}$) that is violated by $\rho$, like for example, one of the PPT conditions. More generally one can obtain an entanglement witness for $\rho\not\in\sep_d$ by finding a hyperplane separating $\rho$ and $\sep_d$, i.e., a matrix
$W \in \H^d \otimes \H^d$ such that 
\begin{equation}\label{eqhsep}
\Tr(W\rho) > h_{\sep}(W):=\max \{ \Tr(W \sigma)\colon \sigma \in \SEP_d\},
\end{equation}
 which shows again the importance of linear optimization over the set $\SEP_d$ and of designing tractable relaxations for $\sep_d$.} The function   $h_{\sep}(W)$ in \cref{eqhsep} is  known as the {\em support function} of $\sep_d$ in the direction $W$. Analogously define the support function of $\dps_{1,t}$ as 
$$h_{\dps_{1,t}}(W):=\max \{\Tr(W\rho): \rho\in\dps_{1,t}\}.
$$
As an  application of the  quantitative result in \cref{eqrhot} the following is shown in \cite{NOP09}:
\begin{equation}\label{eqhsepdps}
h_{\sep}(W)\le h_{\dps_{1,t}}(W) \le \big(1+O\big(\Big({d\over t}\Big)^2\big)\big) h_{\sep}(W).
\end{equation}
Clearly, either \cref{eqrhot} or \cref{eqhsepdps}  implies  equality $\bigcap_{t\ge 1}\dps_{1,t}= \sep_d$, i.e., completeness of the DPS hierarchy.

Fang and Fawzi \cite{FangFawzi} investigate the DPS hierarchy from the dual sum-of-squares perspective. 
In particular, they show a representation result for matrix polynomials that are nonnegative on the sphere,  which they use to give  an alternative proof for \cref{eqhsepdps}.
Namely, they show that, if $F$ is a polynomial matrix in $d$ variables and degree $2k$ such that $0\preceq F(x)\preceq I$ on $\bbS^{d-1}$ then,  for all $t\ge C_k d$,
 $F(x)+ C'_k\big({d\over t}\big)^2 I$ is a Hermitian sum-of-squares matrix polynomial of degree $2t$ on $\bbS^{d-1}$, where $C_k,C'_k$ are constants depending only $k$.
In addition,  detailed proofs are given in \cite{FangFawzi} for the description of the dual cones of the cones  $\dps_{1,t}$:
while  the dual cone of $\sep_d$ consists of the matrices $W$ for which the polynomial $p_W:=\langle W, xx^*\otimes yy^*\rangle$ is nonnegative on the bi-sphere $\bbS^{d-1}\times \bbS^{d-1}$,  the dual cone of $\dps_{1,t}$ consists of the $W$'s for which the polynomial $\|y\|^{2(t-1)}p_W$ is a sum of Hermitian squares.

For the problem of approximating the support function $h_{\sep}(W)$, Harrow, Natarayan and Wu \cite{HNW17} propose to strengthen the set $\dps_{1,t}$ by adding equality constraints arising from the classical optimality conditions. In this way they obtain a hierarchy of bounds for $h_{\sep}(W)$, stronger than $h_{\dps_{1,t}}(W)$, that converges in finitely many steps to $h_{\sep}(W)$.

Li and Ni \cite{Li_2020} use the moment approach on the bi-sphere for testing separability of a state $\rho$ (in the general multipartite setting).
For this, given a generic sum-of-squares polynomial $F$, they consider the problem of minimizing the expectation $\int Fd\mu$ over  the probability measures $\mu$  on the bi-sphere whose degree-4 moments correspond to the entries of $\rho$, and the corresponding moment relaxations (whose constraints are essentially those in the program defining the set $\set_{t}$ in \cref{eq: dpst}). Then a separability certificate can be obtained at a finite relaxation level when the optimal solution satisfies the so-called flatness condition.  Note that the separability problem only asks for the existence of such a measure $\mu$, thus, it is a feasibility problem.
The optimization approach  in  \cite{Li_2020}, based on optimizing a generic polynomial $F$, relies on the fact that  this `encourages' flatness of an optimal solution (which then permits to get a separable decomposition and thus a certificate of separability).
Indeed Nie~\cite{Nie14} shows  that if both the objective and constraints of a polynomial optimization problem are generic, then flatness occurs at some finite relaxation level.
Dressler, Nie, and Yang~\cite{dressler2020separability} strengthen the approach in \cite{Li_2020}: they use a symmetry argument which permits to  replace the bi-sphere by its subset consisting of the points $(x,y)\in \C^d\times \C^d$ that have $x_1,y_1$ real and nonnegative.  This provides a formulation that uses  less real variables ($2(2d-1)$ instead of $4d$) and leads to stronger and more economical moment relaxations. Separability of real states is considered in  \cite{NieZhang2016}, where a similar reduction is applied, namely by restricting to the vectors $(x,y)$ in  the (real) bi-sphere   satisfying $\sum_{i=1}^d x_i\ge 0$ and $\sum_{i=1}^d y_i\ge 0$.

\subsubsection*{Related literature on factorization ranks} 
Various notions of  ``factorization ranks'' have been studied extensively in the literature such as  (versions of) tensor ranks \cite{Kolda}, nonnegative matrix factorization (NMF) rank \cite{Gillis}, positive semidefinite matrix factorization rank \cite{FGPRT}, completely positive matrix factorization rank \cite{BSM03}; we refer to these references  and further references therein for details. Given the importance of factorizations for applications, designing algorithmic methods for finding a factorization of a given type (when it exists) is a topic of ongoing research (see, e.g., \cite{Gillis,DSSV,Sponsel-Dur} and references therein). The above mentioned factorization ranks are often hard to compute (see \cite{Vav09,Shi16,Shi16b} for nonnegative rank, \cite{Shi17} for positive semidefinite rank, \cite{Hastad90} for tensor rank), which motivates the search for good bounds for a given factorization rank.  Such bounds can be obtained using a variety of techniques. For example, using dedicated combinatorial methods  (see, e.g., \cite{FGPRT} and references therein), optimization methods (see, e.g., \cite{FP16}), or using a moment-based approach as we do here. A moment-based approach has previously been used to derive hierarchies of bounds for the rank of tensors~\cite{TS15}, for the symmetric nuclear norm of tensors~\cite{Nie17}, for the nonnegative rank, the completely positive rank, the positive semidefinite rank,  and the completely positive semidefinite rank of matrices~\cite{GdLL17a}. In this paper, we consider the separable rank, a notion which has been present in the (quantum information theory) literature, although no systematic study of bounds for it has been carried out so far to the best of our knowledge.

\subsubsection*{Contents of the paper}
The paper is organized as follows. In Section \ref{sec:preliminaries} we introduce the preliminaries on polynomial optimization that we will need in the rest of the paper. 
In particular, in Section \ref{sec:matrix-setting}, we introduce some of the main notions in the general setting of sum-of-squares matrix polynomials and matrix-valued linear maps.
In Section \ref{sec:moment} we recall the moment method and present the main underlying results from real algebraic geometry and moment theory. Since some of these results are presented in the literature in the real setting while we need the complex setting, we give arguments on how to extend the results from real to complex in Appendix \ref{sec:AppA}.  Section~\ref{sec: LB} is devoted to the new  hierarchy of bounds for the separable rank.
In Section \ref{sec:extension} we indicate several extensions of our approach, in particular for the real separable rank of  real states
and for getting improved bounds on the completely positive rank. We also present numerical results on examples to illustrate the behavior of the bounds in Section \ref{secnumerics}.
Finally, in Section \ref{secDPS} we revisit the Doherty-Parrilo-Spedalieri hierarchy of relaxations for the set $\sep$ of separable states. In particular,  we provide a new, alternative proof for their completeness, that uses 
the tools from the moment approach previously developed.

\section{Preliminaries on polynomial optimization}\label{sec:preliminaries}

In this section, we group some preliminaries about polynomial optimization that we need in the rest of the paper; for a general reference we refer, e.g., to \cite{Las2001,Las2009,Laurent2009} and further references therein. We will deal with polynomial optimization in real and complex variables, which is the setting needed for the application to the set of separable states and the separable rank treated in this paper, and we  will also need to deal with polynomial matrices and matrix-valued linear maps. 

\subsection{Polynomials, linear functionals and moment matrices}\label{secNotation}
We first fix some notation that we use throughout the paper.
$\N$ denotes the set of nonnegative  integers. We set $[n]=\{1,2,...,n\}$ for an integer $n\ge 1$, $[k,n]=\{k,k+1,\ldots,n-1,n\}$ for  integers $k\le n$, and  $|\alpha| = \sum_{i = 1}^n \alpha_i$ for $\alpha\in \N^n$.

For a complex matrix $X$ we denote its transpose by $X^T$ and its conjugate transpose by $X^*$.
For a scalar $a \in \C$ its conjugate is $a^*=\overline a$ and its modulus is $|a| = \sqrt{a^*a}$.
The vector space $\C^n$ is equipped with the scalar product $\inip{x,y} = x^*y=\sum_{i = 1}^{n} x_i^*y_j$ for $x,y\in \C^n$
and the Euclidean norm of $x\in\C^n$ is  $\|x\| = \sqrt{x^*x}$.
Analogously, $\C^{n\times n}$ is equipped with the trace inner product 
$\langle X,Y\rangle=\text{Tr}(X^*Y)=\sum_{i,j=1}^n \overline X_{ij}Y_{ij}$ and $\|X\|=\sqrt{\langle X,X\rangle}$ for $X\in \C^{n\times n}$. 
A matrix $X\in \C^{n\times n}$ is called Hermitian if $X^*=X$ and we let $\calH^{n}$
denote the space of complex Hermitian $n \times n$ matrices, A matrix  $X \in \calH^{n} $ is  positive semidefinite (denoted $X \succeq 0$) if $v^* A v \geq 0$ for all $v \in \C^n$. We let $\calH^n_+$ denote the cone of Hermitian positive semidefinite matrices.

For a set $S$ in a vector space, we let $\cone(S)$ and $\conv(S)$ denote, respectively,  its conic hull and its convex hull.

\paragraph{Polynomials.}
We consider polynomials in $n$ complex variables $x_1,\ldots, x_n$ and their conjugates $\overline{x_1},\ldots,\overline{x_n}$. For $\alpha,\beta \in \N^n$ we use the short-hand $\bx^\alpha \bbx^\beta$ to denote the monomial 
\[
\bx^\alpha \bbx^\beta = \prod_{i=1}^n x_i^{\alpha_i} \prod_{j=1}^n \overline{x_j}^{\beta_j}.
\]
The degree of this monomial, denoted by $\deg(\bx^\alpha \bbx^\beta)$, is equal to $|\alpha|+|\beta| = \sum_{i=1}^n \alpha_i + \beta_i$. We collect the set of all monomials of degree at most $t \in \N\cup \{\infty\}$ in the vector $\cxbx_t$ (using some given ordering of the monomials) and  also set $[\bx,\bbx]=[\bx,\bbx]_\infty$.  We interpret $\cxbx_t$ as a set when we write $\bx^\alpha \bbx^\beta \in \cxbx_t$.  
Taking the complex linear span of all monomials in $\cxbx_t$ gives the space of polynomials with complex coefficients and  degree at most $t$:
\[
\Ccxbx_t := \Span{m ~|~ m \in \cxbx_t}=\Big\{\sum_{m\in [\bx,\bbx]_t}a_m m: a_m\in \C\Big\}.
\]
For $t=\infty$ we obtain the full polynomial ring in $\bx,\bbx$ over $\C$, also denoted as $\C[\bx,\bbx]$.
So any polynomial $p\in \C[\bx,\bbx]$ is of the form $p=\sum_{\alpha,\beta}p_{\alpha,\beta} \bx^\alpha\bbx^\beta$, where only finitely many coefficients $p_{\alpha,\beta}$ are nonzero; its  {\em degree} 
 is   the maximum degree of the monomials occurring in $p$ with a nonzero coefficient,  i.e., $\deg(p) = \max_{p_{\alpha,\beta} \neq 0} \deg(\bx^\alpha \bbx^\beta)$. 
For convenience let $\C^{\N^n\times \N^n}_0$ denote the set of vectors  $\ba=(a_{\alpha,\beta})_{(\alpha,\beta)\in\N^n\times \N^n} $ that have only finitely many nonzero entries. Then any polynomial $p$ can be written as 
$p=\ba^*[\bx,\bbx]$, where we set $\ba =( \overline p_{\alpha,\beta})\in \C^{\N^n\times \N^n}_0$ (the conjugate of the vector of coefficients of $p$).

  Conjugation on complex variables extends linearly to polynomials: for $p = \sum_{\alpha,\beta} p_{\alpha,\beta} \bx^\alpha \bbx^\beta$ we define its conjugate polynomial  $\overline p
=  \sum_{\alpha,\beta} \overline{p}_{\alpha,\beta} \bbx^\alpha \bx^\beta$. Then, $p$ is called {\em Hermitian} if $p = \overline p$. Hermitian polynomials only take real values: $p(x)\in \R$ for all $x\in \C^n$. 
We denote the space of Hermitian polynomials by $\Ccxbxh$. For instance, the polynomial $p=x+\ox$ is Hermitian as well as $p=\bi x-\bi \ox$,  but $q=x-\ox$ is not Hermitian (note $q(\bi)=2\bi\not\in \R$), where $\bi=\sqrt{-1}\in\C$.

To capture positivity on the ring of polynomials, we work with the cone of Hermitian sums of squares. 
Any polynomial of the form $q \overline q$ (for some $q\in \C[\bx,\bbx]$) is called a {\em Hermitian square} and $\Sigma[\bx,\bbx]$ (or simply $\Sigma$) denotes  the conic hull of Hermitian squares.
For any integer $t\in\N$ we let  $\hS[\bx,\bbx]_{2t} = \cone\{p\qo{p}  ~|~ p \in \cxbx_t \}=\Sigma[\bx,\bbx]\cap \C[\bx,\bbx]_{2t}$ (or simply $\Sigma_{2t}$) denote the cone of Hermitian sums of squares with degree at most $2t$.

\paragraph{The dual space of polynomials.}
The algebraic dual of the ring of polynomials $\Ccxbx$ is the vector space of all linear functionals on $\Ccxbx$. To clarify, a linear functional $L$ on $\Ccxbx$ is a linear map from $\Ccxbx$ to $\C$. For every $t\in \N \cup \{\infty \}$ we denote the dual space of $\Ccxbx_t$ by $\Ccxbxs_t$, defined as 
\[
\Ccxbxs_t = \{ L:\Ccxbx_t \rightarrow \C: L~\text{is~linear}\}.
\]
We again abbreviate $\Ccxbxs_{\infty}$ by $\Ccxbxs$. A linear functional $L \in \Ccxbxs_t$ is called \emph{Hermitian} if $L(\overline p) = \overline{L(p)}$ for all $p \in \Ccxbx_t$. A (Hermitian) linear functional $L\in \C[\bx,\bbx]_{2t}^*$ is called \emph{positive} if it maps Hermitian squares to nonnegative real numbers, i.e., if $L(p\overline p) \geq 0$ for all $p \in \Ccxbx_t$.

\paragraph{Example of linear functionals.} For any $a \in \C^{n}$  we can define the \emph{evaluation functional at $a$}, denoted $L_{a} \in \Ccxbxs$, by
\[
L_{a}(p) = p(a)  ~\text{for~every}~p \in C\cxbx.
\]
It is easy to see that $L_a$ is Hermitian and positive. 

\paragraph{Linear functionals applied to polynomial matrices.} 
It will also be useful to apply linear functionals to polynomial matrices, i.e., matrices whose entries are polynomials, by considering an entrywise action.  
That is, for a polynomial matrix  $G=(G_{ij})_{i,j=1}^m \in \Ccxbx^{m \times m}$ and a linear functional $L \in \Ccxbxs$   we define 
\[
L(G) :=  \Big( L(G_{ij}) \Big)_{i,j \in [m]}\in \C^{m\times m}.
\]

\paragraph{Moment matrices.}\label{MomentMat}
As an example, applying a linear functional to the (infinite) matrix $\cxbx \cxbx^*$ leads to the notion of  moment matrix.
Given $L \in \Ccxbxs_{2t}$,  where $t \in \N \cup \{\infty\}$, we define the {\em moment matrix} of $L$  by
\begin{equation}\label{eqMtL}
	M_t(L) := L( \cxbx_t \cxbx^*_t) =  \big( L(m \overline{m'}) \big)_{m,m' \in \cxbx_t } .
\end{equation}
If $t$ is finite then the moment matrix is said to be  {\em truncated} at {\em order} $t$. Note that $L$ 
is Hermitian if and only if its moment matrix $M_t(L)$  is Hermitian. Similarly, $L$
is positive if and only if its moment matrix $M_t(L)$ is positive semidefinite:
\begin{equation}\label{eq_pos_L_ML}
L(p\overline p)\ge 0 \ \forall p\in \C[\bx,\bbx]\ \Longleftrightarrow \ M_t(L)\succeq 0.
\end{equation}
 Indeed,  for any $p\in \C[\bx,\bbx]_t$, written as $p = \ba^* \cxbx_t \in \Ccxbx$ with $\ba\in \C^{\N^n\times \N^n}_0$,  
we have $\overline p=[\bx,\bbx]_t^*\ba$ and thus 
\begin{equation}\label{eq_pos_herm}
	L( p \overline p) = L(\ba^*\cxbx_t \cxbx_t^* \ba) = \ba^*L(\cxbx_t \cxbx^*_t)\ba = \ba^*M_t(L)\ba.
\end{equation}
More generally, if $p=\ba^*[\bx,\bbx]_t$ and $q=\bb^*[\bx,\bbx]_t$ with $\ba,\bb\in  \C^{\N^n\times \N^n}_0$, then 
$L(p\overline q)=\ba^*M_t(L)\bb.$
 If $t = \infty$ we write $M(L)$ instead of $ M_{\infty}(L)$. 
 
 Observe that the moment matrix of an evaluation functional $L_a$  at $a\in \C^d$ satisfies $M(L_a)=[a,\overline a]_t[a,\overline a]_t^*$ and thus it has rank 1. Hence,  if $L$ is a linear combination of evaluation functionals, then its moment matrix has finite rank.

\paragraph{Polynomial localizing maps $gL$.}
 Given a polynomial $g \in \Ccxbx$ and a linear functional $L \in \Ccxbxs$ we can define a new linear functional $gL \in \Ccxbxs$ by 
\begin{align*}
gL: \Ccxbx &\to \C \\
p &\mapsto L(gp).
\end{align*}
In this way, we can say that $g$ acts on $\Ccxbxs$ by mapping $L$ to $gL$. Constraints are often phrased in terms of the positivity of $gL$. As stated before, positivity of $gL$ can be characterized by positive semidefiniteness of its moment matrix:
\begin{equation}\label{eqgLpos}
gL \ \text{ is positive } \Longleftrightarrow \  L(g \cdot \cxbx \cxbx^* ) = M(gL) \succeq 0.
\end{equation}
If both $g$ and $L$ are Hermitian then $gL$ is Hermitian and hence $M(gL)$ is Hermitian. If $L$ is an evaluation map at a point $a\in\C^n$ for which $g(a) \geq 0$, then $gL$ is a positive map since we have $(gL)(p\overline p) = g(a)|p(a)|^2\ge 0$. In the literature $M(gL)$ is often called a {\em localizing moment matrix}.

\subsection{SoS-polynomial matrices and matrix-valued linear maps}\label{sec:matrix-setting}

There is a natural extension of the previously defined concepts to the matrix-valued setting. This extension will be useful, in particular, to define a matrix analog of localizing moment constraints and to provide a moment approach to the hierarchy by Doherty, Parrilo and Spedalieri \cite{DPS04}.

\paragraph{SoS-polynomial matrices.} 
A polynomial matrix $S\in \C[\bx,\bbx]^{m\times m}$ is called  an {\em SoS-polynomial matrix} if $S=U U^*$ for some polynomial matrix $U\in \C[\bx,\bbx]^{m\times k}$ and some integer $k\in \N$, or, equivalently, if 
$S\in \cone\{ \avec{p}\avec{p}^*: \avec{p} = (p_1,\ldots, p_m) \in \C\cxbx^m\}$.

\paragraph{Matrix-valued linear functionals.}
Consider a matrix-valued linear functional 
\begin{align*}
 \calL: \Ccxbx &\to \C^{m \times m} \\
p &\mapsto \calL(p)=\big( L_{ij}(p) \big)_{i,j \in [m]},
\end{align*}
where  $\calL=(L_{ij})_{i,j=1}^m$ and each $L_{ij}\in \C[\bx,\bbx]^*$ is a scalar-valued linear functional.
Then  $\calL$ is {\em  Hermitian} if $\calL(\overline p)=\calL(p)^*$, i.e., $L_{ij}(\overline p) =\overline {L_{ji}(p)}$ for all $i,j\in [m]$, for all $p\in \C[\bx,\bbx]$.
In  addition $\calL$ is said to be {\em positive} if it maps positive elements 
(i.e., Hermitian squares $p\overline p $) to positive elements (i.e., Hermitian positive semidefinite $m\times m$ matrices), i.e., if the following holds:
\begin{equation}\label{eqGLpositive}
\calL(p\overline p) = (L_{ij}(p\overline p))_{i,j=1}^m  
\succeq 0 \text{ for all } p \in \Ccxbx.
\end{equation}
In analogy to \cref{eqMtL} it is natural to define the {\em moment matrix} $M(\calL)$ as
\begin{equation}\label{eqMML}
M( \calL) := \calL(  \cxbx \cxbx^*)= (L_{ij}( \cxbx \cxbx^*))_{i,j=1}^m=(M(L_{ij}))_{i,j=1}^m,
\end{equation}
which thus acts on $\C^m\otimes \C[\bx,\bbx]$. Clearly, $M(\calL)$ is a Hermitian matrix if $\calL$ is Hermitian.
Note that $M(\calL)$ can be viewed as an $m\times m$ block-matrix whose $(i,j)$th block is the moment matrix $M(L_{ij})$.

When $\calL$ acts on a truncated polynomial space $\C[\bx,\bbx]_{2t}$  its moment matrix $M_t(\calL)$, {\em truncated at order $t$}, is defined in the obvious way by
$$M_t( \calL) := \calL(  \cxbx_t \cxbx_t^*)=(M_t(L_{ij}))_{i,j=1}^m,$$
with $M_t(\calL)=M(\calL)$ if $t=\infty$.

One may also  define the action of $\calL$ on a polynomial matrix $S=(S_{ij})_{i,j=1}^m \in \C[\bx,\bbx]^{m\times m}$ by
\begin{equation}\label{eq:LS}
\langle \calL, S\rangle:= \sum_{i,j=1}^m L_{ij}(S_{ij}).
\end{equation}
If $\calL$ and $S$ are both  Hermitian then $\langle \calL,S\rangle\in \R$.  As before,  given $g\in \C[\bx,\bbx]$ we may define a new (localizing) matrix-valued linear map $g\calL$ by:
\begin{align*}
 g\calL: \Ccxbx &\to \C^{m \times m} \\
p &\mapsto (g\calL)(p)=\calL(gp)=\big( L_{ij}(gp) \big)_{i,j \in [m]}.
\end{align*}

\paragraph{Positivity of  $\calL$ and its moment matrix $M(\calL)$.} The analog of \cref{eqgLpos} does not extend to the matrix-valued  case: If $M(\calL)$ is positive semidefinite, then   $\calL$ is positive, but the reverse implication may not hold in general. In the next two lemmas, we present alternative characterizations for positivity of a matrix-valued map $\calL$ and positivity of its moment matrix $M(\calL)$ that make this more apparent.

\begin{lemma}\label{lemGLpos}
$\calL$ is positive, i.e., \cref{eqGLpositive} holds, if and only if 
any of the following equivalent conditions holds:
\begin{align} 
v^* \calL(p\overline p)v=  \big(\sum_{i,j=1}^m \overline {v_i} v_j L_{ij}\big)(p\overline p) =(v^*\calL v)(p\overline p)
\geq 0 \text{ for all } v \in \C^m \text{ and }  p \in \Ccxbx,
\label{eq: positivity of GL as lin func} \\
M(v^*\calL v)\succeq 0 \ \text{ for all } v\in \C^m,
\label{eqGL3}\\
(v \otimes \ba)^* \, M(\calL) \, (v \otimes \ba) \geq 0 \text{ for all } v \in \C^m \text{ and } \ba
\in\C^{\N^n\times \N^n}_0.
\label{eq: positivity of ML}
\end{align}
\end{lemma}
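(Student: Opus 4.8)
The plan is to prove the equivalence of the four conditions by establishing a cycle of implications, exploiting the fact that positivity of $\calL$ is a statement about scalar quantities $v^*\calL(p\overline p)v$ while positivity of the moment matrix $M(\calL)$ is a statement about the larger space $\C^m\otimes\C[\bx,\bbx]$. First I would show that \cref{eqGLpositive} is equivalent to \cref{eq: positivity of GL as lin func}: the matrix $\calL(p\overline p)$ is Hermitian positive semidefinite if and only if $v^*\calL(p\overline p)v\ge 0$ for all $v\in\C^m$, which is just the definition of positive semidefiniteness tested against all vectors. The displayed rewriting $v^*\calL(p\overline p)v=\big(\sum_{i,j}\overline{v_i}v_j L_{ij}\big)(p\overline p)=(v^*\calL v)(p\overline p)$ is a routine expansion using linearity of the entries $L_{ij}$ together with the definition of the scalar functional $v^*\calL v:=\sum_{i,j}\overline{v_i}v_j L_{ij}$.

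Next I would connect \cref{eq: positivity of GL as lin func} to \cref{eqGL3}. For each fixed $v\in\C^m$, the quantity $v^*\calL v$ is a scalar-valued linear functional on $\C[\bx,\bbx]$, so \cref{eq_pos_L_ML} from the scalar setting applies directly: the functional $v^*\calL v$ is positive (maps Hermitian squares to nonnegative reals) if and only if its moment matrix $M(v^*\calL v)$ is positive semidefinite. Since \cref{eq: positivity of GL as lin func} asserts precisely that $(v^*\calL v)(p\overline p)\ge 0$ for all $p$ and all $v$, this is equivalent to $M(v^*\calL v)\succeq 0$ for all $v$, which is \cref{eqGL3}.

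The final link is between \cref{eqGL3} and \cref{eq: positivity of ML}, and this is where the block structure of $M(\calL)$ is the crux of the argument. The key identity is that the moment matrix of the scalar functional $v^*\calL v$ equals the "contraction" of the block moment matrix $M(\calL)$ by $v$, namely $M(v^*\calL v)=\sum_{i,j}\overline{v_i}v_j M(L_{ij})$, which follows from \cref{eqMML} and linearity. Testing $M(v^*\calL v)$ against a coefficient vector $\ba$ gives $\ba^* M(v^*\calL v)\ba=\sum_{i,j}\overline{v_i}v_j\,\ba^* M(L_{ij})\ba=(v\otimes\ba)^* M(\calL)(v\otimes\ba)$, where the last equality unpacks the tensor-product indexing of $M(\calL)$ as an $m\times m$ block matrix with $(i,j)$th block $M(L_{ij})$. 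Thus $M(v^*\calL v)\succeq 0$ for all $v$ (condition \cref{eqGL3}, tested against all $\ba$) is literally the same family of scalar inequalities as $(v\otimes\ba)^*M(\calL)(v\otimes\ba)\ge 0$ for all $v$ and all $\ba$ (condition \cref{eq: positivity of ML}).

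The main obstacle, such as it is, is purely notational rather than conceptual: one must carefully verify the tensor-product index identity $(v\otimes\ba)^*M(\calL)(v\otimes\ba)=\sum_{i,j}\overline{v_i}v_j\,\ba^* M(L_{ij})\ba$, making sure the ordering of the Kronecker factors matches the convention under which $M(\calL)$ is an $m\times m$ block matrix with blocks indexed by $[m]\times[m]$ and each block living on $\C[\bx,\bbx]$. Once this bookkeeping is settled, every implication reduces to the scalar characterization in \cref{eq_pos_L_ML} applied to the functionals $v^*\calL v$, and no genuinely new positivity argument is needed. It is worth emphasizing (as the surrounding text already flags) that the proof does \emph{not} establish positivity of $M(\calL)$ itself from positivity of $\calL$; rather, \cref{eq: positivity of ML} only asks for nonnegativity on the restricted set of \emph{product} vectors $v\otimes\ba$, which is precisely the weaker condition equivalent to positivity of $\calL$.
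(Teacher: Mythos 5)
Your proof is correct and follows essentially the same route as the paper: the first two equivalences via the scalar characterization \cref{eq_pos_L_ML} applied to the functionals $v^*\calL v$, and the last via the identity $v^*\calL(p\overline p)v=\ba^* M(v^*\calL v)\ba=(v\otimes \ba)^*M(\calL)(v\otimes\ba)$ unpacking the block structure of $M(\calL)$ from \cref{eqMML}. The only cosmetic difference is that you link \cref{eqGL3} to \cref{eq: positivity of ML} while the paper links \cref{eq: positivity of GL as lin func} to \cref{eq: positivity of ML} directly, which is the same computation.
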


\begin{proof}
The equivalence of \cref{eqGLpositive} and \cref{eq: positivity of GL as lin func} is clear. The equivalence of \cref{eq: positivity of GL as lin func} and \cref{eqGL3} follows using  \cref{eq_pos_L_ML}  
applied to each   (scalar-valued) map $v^*\calL v$. To see the equivalence of 
\cref{eq: positivity of GL as lin func} and \cref{eq: positivity of ML}, write  a polynomial $p\in \C[\bx,\bbx]$ as 
$p=\ba^*[\bx,\bbx]$ with  $\ba=(a_{\alpha,\beta})\in\C^{\N^n\times \N^n}_0$. 
Then, for any $v\in \C^m$, following \cref{eq_pos_herm},  we have:
$$v^*\calL (p\overline p)v=v^*( L_{ij}(p\overline p))_{i,j=1}^m v= v^*(  \ba^*  M(L_{ij}) \ba)_{i,j=1}^m v=
 (v\otimes \ba)^* M(\calL) v\otimes \ba,$$
using the definition of $M(\calL)$ from \cref{eqMML}.
\end{proof}

\begin{lemma}\label{lemMGLpos} 
$M(\calL)\succeq 0$ if and only if any of the following equivalent conditions holds:
\begin{align}
w^* M(\calL) w \geq 0 \text{ for all } w \in \C^m \otimes  \C^{\N^n\times \N^n}_0,
\label{eqMGpsd}\\
\langle \calL, \avec{p}\avec{p}^*\rangle=  \sum_{i,j=1}^m L_{ij}(p_i \overline{p}_j)
     \geq 0 \text{ for all } \avec{p} = (p_1,\ldots, p_m) \in \C\cxbx^m,
      \label{eq: M(ML) psd} 	\\
     \langle \calL, S\rangle \ge 0 \text{ for all SoS-polynomial matrices } S\in\C[\bx,\bbx]^{m\times m}.
      \label{eq: MMLSoS}
  \end{align}
  	\end{lemma}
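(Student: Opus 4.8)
The plan is to prove that the three conditions \cref{eqMGpsd}, \cref{eq: M(ML) psd} and \cref{eq: MMLSoS} are each equivalent to $M(\calL)\succeq 0$, through a chain of implications that merely unwind definitions and reuse the moment-matrix identity \cref{eq_pos_herm} already exploited in the proof of \cref{lemGLpos}. The equivalence of $M(\calL)\succeq 0$ with \cref{eqMGpsd} is essentially the definition of positive semidefiniteness: $M(\calL)$ is a Hermitian matrix indexed by $[m]\times(\N^n\times\N^n)$, possibly infinite when $t=\infty$, and such a matrix is positive semidefinite precisely when $w^*M(\calL)w\ge 0$ for all finitely supported test vectors $w\in\C^m\otimes\C^{\N^n\times\N^n}_0$, equivalently when all of its finite principal submatrices are positive semidefinite.

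The heart of the argument is the equivalence of \cref{eqMGpsd} and \cref{eq: M(ML) psd}, where I would set up an explicit correspondence between test vectors and tuples of polynomials, paralleling the computation in the proof of \cref{lemGLpos}. Writing $w=(\ba_1,\ldots,\ba_m)$ in block form with each $\ba_i\in\C^{\N^n\times\N^n}_0$, I associate to it the tuple $\avec{p}=(p_1,\ldots,p_m)\in\C\cxbx^m$ with $p_i=\ba_i^*[\bx,\bbx]$. Using the block structure $M(\calL)=(M(L_{ij}))_{i,j=1}^m$ from \cref{eqMML} together with the matrix-entry version $L_{ij}(p_i\overline{p}_j)=\ba_i^*M(L_{ij})\ba_j$ of \cref{eq_pos_herm}, I would compute
\[
w^*M(\calL)w=\sum_{i,j=1}^m\ba_i^*M(L_{ij})\ba_j=\sum_{i,j=1}^m L_{ij}(p_i\overline{p}_j)=\langle\calL,\avec{p}\avec{p}^*\rangle.
\]
Since $\avec{p}$ ranges over all of $\C\cxbx^m$ as $w$ ranges over $\C^m\otimes\C^{\N^n\times\N^n}_0$, this single identity yields the equivalence of \cref{eqMGpsd} and \cref{eq: M(ML) psd}.

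For the remaining equivalence of \cref{eq: M(ML) psd} and \cref{eq: MMLSoS} I would use the characterization of SoS-polynomial matrices as the conic hull $\cone\{\avec{p}\avec{p}^*:\avec{p}\in\C\cxbx^m\}$. One direction is immediate, since each $\avec{p}\avec{p}^*$ is itself an SoS-polynomial matrix; for the converse, any SoS-polynomial matrix $S=UU^*$ factors as $S=\sum_\ell\avec{u}_\ell\avec{u}_\ell^*$ over the columns $\avec{u}_\ell$ of $U$, so linearity of the pairing $S\mapsto\langle\calL,S\rangle$ gives $\langle\calL,S\rangle=\sum_\ell\langle\calL,\avec{u}_\ell\avec{u}_\ell^*\rangle\ge0$ as soon as \cref{eq: M(ML) psd} holds. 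I do not expect a genuine obstacle in this lemma: the only points demanding care are the correct placement of conjugates in the correspondence $w\leftrightarrow\avec{p}$, and the reading, in the case $t=\infty$, of positive semidefiniteness of the infinite matrix $M(\calL)$ through its finite principal submatrices, which is exactly what the finitely supported vectors of \cref{eqMGpsd} encode. It is worth emphasizing that the contrast with \cref{lemGLpos} lies precisely here: there one tests $M(\calL)$ only against rank-one tensors $v\otimes\ba$, whereas here one tests against all vectors $w$.
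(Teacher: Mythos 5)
Your proof is correct and follows essentially the same route as the paper: the block correspondence $w=(\ba_1,\ldots,\ba_m)\leftrightarrow \avec{p}=(p_1,\ldots,p_m)$ with $p_i=\ba_i^*[\bx,\bbx]$ and the identity $w^*M(\calL)w=\sum_{i,j}L_{ij}(p_i\overline{p}_j)$ is exactly the paper's computation for the equivalence of \cref{eqMGpsd} and \cref{eq: M(ML) psd}, and the equivalence with \cref{eq: MMLSoS} via SoS-matrices being conic combinations of $\avec{p}\avec{p}^*$ matches as well. Your added detail on factoring $S=UU^*$ through the columns of $U$ is a harmless elaboration of what the paper leaves implicit.
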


\begin{proof}
\cref{eqMGpsd} is clear. To see the equivalence with \cref{eq: M(ML) psd} 
consider a vector $w=(w_{i, (\alpha,\beta)})_{i, (\alpha,\beta)}$ in $ \C^m\otimes \C^{\N^n\times \N^n}_0$ and, for each $i\in [m]$, define the vector 
$\ba_i=(w_{i, (\alpha,\beta)})_{(\alpha,\beta)}\in \C^{\N^n\times \N^n}_0$, the corresponding polynomial $p_i= \ba_i^* [\bx,\bbx]$, and define the polynomial vector $\avec{p}=(p_1,\ldots,p_m)\in \C[\bx,\bbx]^m.$ Then 
$$
w^* M(\calL) w=w^*(M(L_{ij}))_{i,j=1}^m w=
\sum_{i,j=1}^m \ba_i^* (L_{ij}([\bx,\bbx][\bx,\bbx]^*))_{i,j=1}^m \ba_j 
= \sum_{i,j=1}^m L_{ij}(p_i \overline{p}_j),
$$
implying  the equivalence of \cref{eqMGpsd} and \cref{eq: M(ML) psd}.
The equivalence with \cref{eq: MMLSoS}  follows since SoS-polynomial matrices are conic combinations of terms of the form $ \avec{p}\avec{p}^*$. 
\end{proof}

\noindent
Note that \cref{eq: positivity of ML} is the restriction of \cref{eqMGpsd}, where we restrict to vectors $w$ in tensor product form $w=v\otimes \ba$.
In addition, we recover 
\cref{eq: positivity of GL as lin func} if, in \cref{eq: M(ML) psd}, we restrict to polynomials $p_1,\ldots,p_m$  of the form  $p_i = v_i p$ (for $i \in [m]$) for some $p\in\C[\bx,\bbx]$ and  $v =(v_1,\ldots,v_m)\in \C^m$. This shows  again that \cref{eq: positivity of GL as lin func} is more restrictive than \cref{eq: M(ML) psd}. 
Summarizing, we have  the following implication.

\begin{lemma}\label{lemMGLtoGL}
If $M(\calL)\succeq 0$ then $\calL$ is positive. 
\end{lemma}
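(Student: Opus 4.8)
The plan is to derive the implication directly from the two characterization lemmas that precede it, since Lemma~\ref{lemMGLtoGL} is essentially a one-line corollary of the inclusions among the positivity conditions established in Lemmas~\ref{lemGLpos} and~\ref{lemMGLpos}. The key observation is that the condition \cref{eq: positivity of ML} characterizing positivity of $\calL$ is a \emph{restriction} of the condition \cref{eqMGpsd} characterizing positive semidefiniteness of $M(\calL)$: the former quantifies only over vectors $w$ of the special tensor-product form $w = v \otimes \ba$ with $v \in \C^m$ and $\ba \in \C^{\N^n\times\N^n}_0$, whereas the latter quantifies over \emph{all} vectors $w \in \C^m \otimes \C^{\N^n\times\N^n}_0$.

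Concretely, I would argue as follows. Suppose $M(\calL) \succeq 0$. By Lemma~\ref{lemMGLpos}, and in particular the equivalence $M(\calL)\succeq 0 \Longleftrightarrow$ \cref{eqMGpsd}, we have $w^* M(\calL) w \geq 0$ for all $w \in \C^m \otimes \C^{\N^n\times\N^n}_0$. In particular this holds for every $w$ of the form $w = v \otimes \ba$ with $v \in \C^m$ and $\ba \in \C^{\N^n\times\N^n}_0$, so that $(v\otimes\ba)^* M(\calL)(v\otimes\ba) \geq 0$ for all such $v,\ba$; that is, condition \cref{eq: positivity of ML} holds. By Lemma~\ref{lemGLpos}, condition \cref{eq: positivity of ML} is equivalent to positivity of $\calL$. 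Hence $\calL$ is positive, as desired.

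There is essentially no obstacle here, which is exactly the point the authors are making: the content of the statement is that the matrix-valued analog of the scalar equivalence \cref{eqgLpos} breaks in only one direction. The preceding remark already notes that \cref{eq: positivity of ML} is the restriction of \cref{eqMGpsd} to tensor-product vectors, so the proof of Lemma~\ref{lemMGLtoGL} is just the formal recording of that set-theoretic inclusion of quantifier ranges combined with the two equivalences. If I wanted to make the proof entirely self-contained without invoking the characterizations, I could instead observe directly from definition \cref{eqGLpositive} that for any $v \in \C^m$ and $p = \ba^*[\bx,\bbx] \in \Ccxbx$ the quantity $v^*\calL(p\overline p)v$ equals $(v\otimes\ba)^* M(\calL)(v\otimes\ba)$ (this is precisely the identity computed in the proof of Lemma~\ref{lemGLpos}), which is nonnegative whenever $M(\calL)\succeq 0$; since $\calL(p\overline p)$ is then a Hermitian matrix with $v^*\calL(p\overline p)v \geq 0$ for all $v$, it is positive semidefinite, giving positivity of $\calL$. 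Either route is short; invoking the two lemmas is the cleaner presentation and is what I would write.
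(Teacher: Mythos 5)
Your proposal is correct and matches the paper exactly: the paper derives \cref{lemMGLtoGL} precisely by noting that \cref{eq: positivity of ML} (equivalent to positivity of $\calL$ via \cref{lemGLpos}) is the restriction of \cref{eqMGpsd} (equivalent to $M(\calL)\succeq 0$ via \cref{lemMGLpos}) to tensor-product vectors $w=v\otimes\ba$. Your alternative self-contained route via the identity $v^*\calL(p\overline p)v=(v\otimes\ba)^*M(\calL)(v\otimes\ba)$ is likewise just an unfolding of the same computation from the proof of \cref{lemGLpos}, so there is nothing to add.
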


\begin{remark}\label{remcomplexity}
Note that requiring positivity of the moment matrix $M(\calL)$ not only provides a stronger condition than requiring positivity of $\calL$, but it is also a condition that is computationally easier to check. To make this concrete we consider the truncated case when  $\calL$ is restricted to the subspace $ \C[\bx,\bbx]_{2t}$. 
Then, the condition $M_t(\calL)\succeq 0$ asks whether a single matrix is positive semidefinite, which can be efficiently done. On the other hand, asking whether $\calL$ is positive on sums of squares of degree at most $2t$ amounts to checking whether  $M_t(v^*\calL v) \succeq 0$ for all $v\in \C^m$, i.e.,   positive semidefiniteness of infinitely many matrices.

Note also that \cref{eq: MMLSoS} highlights  the duality relationship which exists between $m\times m$ SoS-polynomial matrices and matrix-valued linear maps $\calL$ with $M(\calL)\succeq 0$. 
\end{remark}
 
\paragraph{Link to complete positivity of $\calL$.}
We now point out a link to the notion of complete positivity. Given a linear map $\calL:\C[\bx,\bbx]\to \C^{m\times m}$ and an integer $k\in \N$ one can define a new linear map
\begin{align*}
I_k\otimes \calL:  \C[\bx,\bbx]^{k\times k} & \to \C^{k\times k}\otimes \C^{m\times m} \\
 (p_{i' j'})_{i',j'=1}^k  &\mapsto  (\calL(p_{i'j'}))_{i',j'=1}^k.
\end{align*}
Then $\calL$ is said to be {\em completely positive} if $I_k\otimes \calL$ is positive for all $k\in \N$. (See, e.g.,  \cite{paulsen_2003} for a general reference about completely positive maps.)

\begin{lemma} \label{lemCPL}
$\calL$ completely positive $\Longrightarrow$ $I_m\otimes \calL$ positive $\Longrightarrow$ $M(\calL)\succeq 0$.
\end{lemma}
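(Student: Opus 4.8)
The plan is to treat the two implications in turn; the first is immediate, and the second is where the work lies. For the first implication, complete positivity of $\calL$ means by definition that $I_k\otimes\calL$ is positive for \emph{every} $k\in\N$, so specializing to $k=m$ already gives that $I_m\otimes\calL$ is positive. No argument is needed here beyond unpacking the definition.

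For the second implication I would not reason about the moment matrix $M(\calL)$ directly, but instead invoke the characterization from \cref{lemMGLpos}: by \cref{eq: M(ML) psd}, the condition $M(\calL)\succeq 0$ holds precisely when $\sum_{i,j=1}^m L_{ij}(p_i\overline{p_j})\ge 0$ for every polynomial vector $\avec{p}=(p_1,\ldots,p_m)\in\C[\bx,\bbx]^m$. So it suffices to derive each such scalar inequality from a single application of the positive map $I_m\otimes\calL$. Given $\avec{p}$, I would view it as a column $Q\in\C[\bx,\bbx]^{m\times 1}$ and form the rank-one SoS-polynomial matrix $S:=QQ^*\in\C[\bx,\bbx]^{m\times m}$, whose entries are $S_{i'j'}=p_{i'}\overline{p_{j'}}$. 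Being an SoS-polynomial matrix, $S$ is a positive element of the domain of $I_m\otimes\calL$, so positivity of the map gives that the block matrix $(I_m\otimes\calL)(S)=\big(\calL(p_{i'}\overline{p_{j'}})\big)_{i',j'=1}^m$ is positive semidefinite, viewed as a matrix acting on $\C^m\otimes\C^m$; its $((i',a),(j',b))$-entry is $L_{ab}(p_{i'}\overline{p_{j'}})$.

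The key step is then to read off the trace-like sum by contracting with the canonical maximally entangled vector $w=\sum_{i=1}^m e_i\otimes e_i\in\C^m\otimes\C^m$. Since $w$ is supported exactly on the indices $(i',a)$ with $a=i'$, a direct computation gives $w^*(I_m\otimes\calL)(S)\,w=\sum_{i,j=1}^m L_{ij}(p_i\overline{p_j})$, which is $\ge 0$ because $(I_m\otimes\calL)(S)\succeq 0$. This is exactly the inequality demanded by \cref{eq: M(ML) psd}, and as $\avec{p}$ was arbitrary we conclude $M(\calL)\succeq 0$.

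I expect the only real care to be needed in two bookkeeping points: first, fixing the convention that the positive elements of the domain $\C[\bx,\bbx]^{m\times m}$ are the SoS-polynomial matrices (so that the rank-one $QQ^*$ is an admissible test element and positivity of $I_m\otimes\calL$ applies to it); and second, keeping straight which index of $L_{ab}$ is the block index and which is the entry index, so that the entangled contraction $w=\sum_i e_i\otimes e_i$ picks out precisely the diagonal terms $L_{ij}(p_i\overline{p_j})$ and not some off-diagonal combination. Beyond this, the argument is routine linear algebra; in fact a small extension of the same computation, carried out with a general rank-$r$ SoS test matrix $QQ^*$ and an arbitrary contraction vector $w=(w_{i'a})$ (regrouping via $g_a:=\sum_{i'}\overline{w_{i'a}}\,q_{i'}$), shows that $M(\calL)\succeq 0$ conversely implies positivity of $I_k\otimes\calL$ for all $k$, so the three conditions are actually equivalent — though only the stated direction is needed here.
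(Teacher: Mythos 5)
Your proof is correct and essentially identical to the paper's: the same reduction via \cref{eq: M(ML) psd} of \cref{lemMGLpos}, the same rank-one SoS test matrix $\avec{p}\avec{p}^*$ fed to $I_m\otimes\calL$, and your maximally entangled contraction vector $w=\sum_{i=1}^m e_i\otimes e_i$ is exactly the paper's vector $w=(w_{ii'})$ with $w_{ii'}=1$ if $i=i'$ and $0$ otherwise. One caution on your closing aside only: under the paper's notion of positive elements (pointwise positive semidefinite polynomial matrices, as used in the proof of \cref{theomainmatrix}), $M(\calL)\succeq 0$ does \emph{not} in general imply complete positivity --- the paper explicitly notes the three notions are equivalent only in the Archimedean setting --- so your claimed converse holds only under your ``SoS-matrices-are-the-positive-elements'' convention; this does not affect the proof of the stated implications.
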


\begin{proof}
The first implication is obvious. Assume $I_m\otimes \calL$ is positive, we show that $M(\calL)\succeq 0$. In view of \cref{eq: M(ML) psd} it suffices to show that $\sum_{i,j=1}^m L_{ij}(p_i\overline {p}_j)\ge 0$ for all $\avec{p}=(p_1,\ldots,p_m)\in \C[\bx,\bbx]^m$. As $\avec{p}\avec{p}^*$ is  a SoS-polynomial  matrix (and thus a positive element), it follows that 
$(I_m\otimes \calL)(\avec{p}\avec{p}^*)=  (\calL(p_{i'}\overline{p}_{j'}))_{i',j'=1}^m \succeq 0$. Consider the vector $w=(w_{ii'})_{i,i'\in [m]}$ with entries $w_{ii'}=1$ if $i=i'$ and $w_{ii'}=0$ otherwise.
Then, 
$ \sum_{i,j=1}^m L_{ij}(p_i \overline{p}_j) =w^*  (\calL(p_{i'}\overline{p}_{j'}))_{i',j'=1}^m w \ge 0$, as desired.
\end{proof}

\paragraph{Polynomial matrix localizing maps $G\otimes L$.}
Given a (scalar-valued)  linear map $L\in \C[\bx,\bbx]^*$ there is a   natural generalization of the above  notion of localizing map $gL$, where, instead of considering a scalar  polynomial $g$, we consider a polynomial matrix $G=(G_{ij})_{i,j=1}^m \in \C[\bx,\bbx]^{m\times m}$. Then, we can define  the matrix-valued linear map 
$\calL:=  (G_{ij}L)_{i,j=1}^m$, that we denote by  $G\otimes L$, by
\begin{align*}
G\otimes L:  \C[\bx,\bbx] & \to  \C^{m\times m} \\
p &\mapsto  (G\otimes L)(p):=\big( (G_{ij} L)(p) \big)_{i,j=1}^m= \big(L(G_{ij}p)\big)_{i,j=1}^m= L(Gp).
\end{align*}
Following \cref{eqMML}   the {moment matrix} of $G \otimes L$ is
\begin{equation}\label{eqMGL}
M(G\otimes L) = (G\otimes L) (\cxbx \cxbx^*)=((G_{ij}L)(\cxbx \cxbx^*))_{i,j=1}^m= L( G \otimes \cxbx \cxbx^*).
\end{equation}

\begin{remark} \label{rem: M(GL) product and psd}
	When $L=L_a$ is the (scalar-valued) evaluation map at a vector $a \in \C^n$ the moment matrix $M(G\otimes L_a)$ has indeed a tensor product structure, since we have
	 	\[
	M(G \otimes L_a) = L_a(G \otimes \cxbx \cxbx^*) = G(a) \otimes [a,\overline a][a,\overline a]^* = L_a(G) \otimes L_a(\cxbx \cxbx^*).
	\]
	In particular, if $G(a) \succeq 0$ then we have $M(G \otimes L_a) \succeq 0$. Therefore, $M(G \otimes L) \succeq 0$ when $L$ is a conic combination of evaluation maps at points at which $G$ is positive semidefinite. This property motivates using such a positivity constraint in defining our bounds for the separable rank and the completely positive rank.
\end{remark}

As observed above, $M(G\otimes L)\succeq 0$ implies that $G\otimes L$ is positive. Note that, by \cref{eqGL3}, $G\otimes L$ is positive if and only if $M((v^*Gv)  L)\succeq 0$ for all $v\in\C^m$, while, by \cref{eq: M(ML) psd}, $M(G\otimes L)\succeq 0$ if and only if $L(\avec{p}^*G \avec{p})\ge 0$ for all $\avec{p}\in \C[\bx,\bbx]^m$. In particular, for a truncated linear map $L\in \C[\bx,\bbx]^*_{2t}$,  the condition $M_t(G\otimes L)\succeq 0$ implies any of the following two equivalent conditions (the truncated analogs of \eqref{eq: positivity of GL as lin func} and \eqref{eqGL3}), which characterize positivity of $G\otimes L$ on $\Sigma_{2t}$:
\begin{align}
L(v^*Gv \cdot p\overline p)\ge 0 \ \text{ for all } v\in \C^m \text{ and } p\in \C[\bx,\bbx]_t,\label{eqMtGLa}\\
M_t((v^*Gv) L)\succeq 0\ \text{ for all } v\in \C^m. \label{eqMtGL}
\end{align}
 While it is computationally easy to check whether $M_t(G\otimes L)\succeq 0$, it is not clear how to  check the above conditions efficiently.
  For this reason, we will select the stronger moment matrix positivity condition when defining our new hierarchy of bounds for the separable rank. However, we note  that the weaker positivity condition of the localizing map will be sufficient to establish convergence properties of the bounds.

\subsection{The moment method}\label{sec:moment}
We now  state several widely used definitions and results from polynomial optimization that we will need to design our hierarchy of bounds on the separable rank and for the moment approach to the DPS approximation hierarchy of the set $\SEP$ of separable states.

	Given a set of Hermitian polynomials  $S \subseteq \Ccxbxh$ we define the \emph{positivity domain} of $S$ as
	\begin{equation}\label{poddom}
		\sD(S) := \{u \in \C^{n} ~|~ g(u) \geq 0 ~\text{for~every}~ g \in S\}.
	\end{equation}
Given a Hermitian polynomial matrix $G\in \C[\bx,\bbx]^{m\times m}$ we define 
	 the polynomial set
	\begin{equation}\label{eqSG}
	S_G:=\{v^* Gv: v\in \C^d, \|v\|=1\}\subseteq \C[\bx,\bbx]^h,
	\end{equation}
so that  the set 
\begin{equation}\label{posdomG}
		\sD(S_G) = \{u \in \C^{n} ~|~ G(u) \succeq  0\}
	\end{equation}
	corresponds to the positivity domain of $G$. 
		For $t \in \N \cup \{\infty\}$ and $S \subseteq \Ccxbxh$  the set 
	$$
	\calM(S)_{2t} := \cone\{gp\overline p ~|~ p \in \Ccxbx,~ g \in S \cup \{1\},~\deg(gp\overline p) \leq 2t\}
	$$
	denotes the \emph{quadratic module} generated by $S$, {\em truncated at order $2t$} when $t\in \N$. If $t = \infty$ we simply write $\calM (S)$. The quadratic module $\calM(S)$ is said to be {\em Archimedean} if, for some scalar $R > 0$, 
	\begin{equation}\label{ArchimedianCond}
	R - \sum_{i = 1}^{n}x_i \overline{x_i} \in \calM(S).
	\end{equation}
	Hence a quadratic module is Archimedean if it contains an algebraic certificate of boundedness of the associated positivity domain. The next lemma shows that, in the case when the algebraic certificate  in (\ref{ArchimedianCond}) belongs to  the quadratic module $\calM(S)_{2}$, 
	 the  linear functionals that are nonnegative on $\calM(S)$  are bounded. Its proof is standard (and easy) and thus omitted. 

\begin{lemma}\label{pointconv} 
	Let $S \subseteq \Ccxbxh $ be such that $ R  - \sum_{i = 1}^{n} x_i \overline{x_i} \in  \calM(S)_2$ for some $R> 0$. For any $t \in \N$ assume $L_t \in \Ccxbx_{2t}^*$ is nonnegative on $\calM(S)_{2t}$.  Then we have 
	$$
	|L_t(w)| \leq R^{|w|/2}L_t(1)~\text{for~all}~ w \in  \cxbx_{2t}.
	$$
	Moreover, if 
	\begin{equation}\label{Ltsup}
	\sup_{t\in \N} L_t(1) < \infty,
	\end{equation}
	then $\{L_t\}_{t \in \N}$ has a point-wise converging subsequence in $ \Ccxbxs$.
\end{lemma}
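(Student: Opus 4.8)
The plan is to first prove the pointwise bound $|L_t(w)|\le R^{|w|/2}L_t(1)$ for all monomials $w\in\cxbx_{2t}$, and then to extract a pointwise convergent subsequence by a Bolzano--Weierstrass/diagonal argument. For the bound, I would begin by noting that nonnegativity of $L_t$ on $\calM(S)_{2t}$ already forces $M_t(L_t)\succeq 0$: taking $g=1$ in the definition of $\calM(S)_{2t}$ gives $L_t(p\overline p)\ge 0$ for all $p\in\Ccxbx_t$, and by \cref{eq_pos_herm} this reads $\ba^*M_t(L_t)\ba\ge 0$ for every coefficient vector $\ba$, so $M_t(L_t)$ is Hermitian positive semidefinite. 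This in turn yields the Cauchy--Schwarz inequality $|L_t(p\overline q)|^2\le L_t(p\overline p)\,L_t(q\overline q)$ for all $p,q\in\Ccxbx_t$, which will be the workhorse of the estimate.

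The second ingredient is a recursion coming from the Archimedean certificate. Since $R-\sum_{i=1}^n x_i\overline{x_i}\in\calM(S)_2$, multiplying it by a Hermitian square $p\overline p$ with $\deg p\le t-1$ keeps the product inside $\calM(S)_{2t}$, so that $L_t\big((R-\sum_i x_i\overline{x_i})p\overline p\big)\ge 0$, i.e.
\[
\sum_{i=1}^n L_t\big((x_ip)\overline{(x_ip)}\big)\le R\,L_t(p\overline p)\qquad (\deg p\le t-1).
\]
Discarding all but one of the nonnegative summands gives $L_t((x_ip)\overline{(x_ip)})\le R\,L_t(p\overline p)$, and iterating this along a monomial $\bx^\gamma$ with $|\gamma|\le t$ (peeling off one factor at a time, each step valid because the intermediate degrees stay $\le t-1$) produces the diagonal bound $L_t(\bx^\gamma\bbx^\gamma)\le R^{|\gamma|}L_t(1)$.

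To pass from diagonal moments to an arbitrary monomial $w\in\cxbx_{2t}$ of degree $d=|w|$, I would factor $w=u\overline v$ with $u,v\in\cxbx_t$ monomials of degrees $\lfloor d/2\rfloor$ and $\lceil d/2\rceil$. Cauchy--Schwarz then gives $|L_t(w)|^2\le L_t(u\overline u)\,L_t(v\overline v)$, and since $u\overline u$ and $v\overline v$ are diagonal terms of the form $\bx^\gamma\bbx^\gamma$, the diagonal bound yields $|L_t(w)|^2\le R^{\deg u+\deg v}L_t(1)^2=R^{d}L_t(1)^2$, which is exactly the claim. I expect this balanced factorization to be the only genuinely delicate point: the naive choice $w=\bx^\alpha\cdot\overline{\bx^\beta}$ may violate the constraint $\deg\le t$, and one must instead distribute the $d$ conjugated and unconjugated letters of $w$ evenly between $u$ and $v$, which is always possible precisely because $d\le 2t$.

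Finally, for the convergence claim, the hypothesis $\sup_t L_t(1)<\infty$ combined with the bound gives, for each fixed monomial $w$, a uniform estimate $|L_t(w)|\le R^{|w|/2}\sup_t L_t(1)$ valid once $2t\ge\deg w$. As the set of monomials is countable, Bolzano--Weierstrass applied coordinatewise together with a diagonal extraction produces a subsequence $(L_{t_j})_j$ such that $L_{t_j}(w)$ converges for every monomial $w$; setting $L(w)$ equal to this limit and extending linearly defines a functional $L\in\Ccxbxs$ with $L_{t_j}\to L$ pointwise, since every polynomial is a finite linear combination of monomials.
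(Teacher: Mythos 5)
Your proposal is correct and is precisely the standard argument the paper has in mind when it declares the proof of \cref{pointconv} ``standard (and easy)'' and omits it: positivity on $\calM(S)_{2t}$ forces the truncated moment matrix to be Hermitian positive semidefinite (so Cauchy--Schwarz applies), multiplying the certificate $R-\sum_{i}x_i\overline{x_i}\in\calM(S)_2$ by Hermitian squares of degree at most $t-1$ stays inside $\calM(S)_{2t}$ and yields the diagonal recursion $L_t(\bx^\gamma\bbx^\gamma)\le R^{|\gamma|}L_t(1)$ for $|\gamma|\le t$, and a coordinatewise Bolzano--Weierstrass with diagonal extraction gives the pointwise limit. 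You correctly identified and handled the one delicate point, namely the balanced factorization $w=u\overline v$ with $\deg u=\lfloor|w|/2\rfloor$ and $\deg v=\lceil|w|/2\rceil$, both at most $t$, which is what makes Cauchy--Schwarz applicable for every monomial $w\in\cxbx_{2t}$.
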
	

\paragraph{Linear functionals and measures.}

The following result is central to our approach for approximating matrix factorization ranks. It is a complex analog 
of  results by Putinar \cite{Pu93} and Tchakaloff~\cite{Tchakaloff}. 
For completeness,  we will indicate in \cref{AppProofPutinar}  
how to derive from these results the following complex analog.
\begin{theorem}
\label{theomainTchakaloff}
	Let $S \subseteq  \Ccxbxh$ be a set of Hermitian polynomials such that the quadratic module $\calM(S)$ is Archimedean and consider a Hermitian linear map $L: \C[\bx,\bbx]\to \C$. Assume  that $L$
	is  nonnegative on $\calM(S)$. Then the  following holds.
\begin{itemize}
\item[(i)] (based on \cite{Pu93}) $L$ has a representing measure $\mu$ that is supported by $\sD(S)$, i.e., we have
$L(p)=\int_{\sD(S)}p d\mu$ for all $p\in\C[\bx,\bbx]$.
\item[(ii)] (based on \cite{Tchakaloff}) For any  integer $k \in \N$, there exists a linear functional $\haL: \C[\bx,\bbx]\to \C$ which coincides with $L$ on $\C[\bx,\bbx]_k$ and has a finite atomic representing measure supported by $\sD(S)$, i.e.,	
	we have
	\begin{align}	
	\haL(p) = L(p) \text{ for every } p \in \Ccxbx_k, \label{eqext}\\
	\haL =  \sum_{\ell=1}^{K} \lambda_\ell L_{v_\ell},\label{eqatom}
			\end{align}
	for some  integer $K\ge 1$, scalars  $\lambda_1,\lambda_2,...,\lambda_K>0$ and vectors $v_1,v_2,...,v_K \in \sD(S)$.
\end{itemize}

\end{theorem}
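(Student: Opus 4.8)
The plan is to reduce both parts to their classical real counterparts via the standard substitution turning $n$ complex variables into $2n$ real ones. Writing $x_j=a_j+\i b_j$ (so $\overline{x_j}=a_j-\i b_j$) with $a_j,b_j$ real induces a degree-preserving isomorphism of real algebras $\phi\colon\R[\ba,\bb]\to\Ccxbxh$ from the real polynomials in $\ba=(a_1,\dots,a_n)$, $\bb=(b_1,\dots,b_n)$ onto the Hermitian polynomials. Since $L$ is Hermitian it takes real values on $\Ccxbxh$, so $\wtl L:=L\circ\phi$ is a well-defined $\R$-linear functional on $\R[\ba,\bb]$; conversely, decomposing any $p$ as $p=\tfrac12(p+\overline p)+\i\tfrac{1}{2\i}(p-\overline p)$ into two Hermitian summands shows that $L$ is completely determined by $\wtl L$. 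Identifying $\C^n\cong\R^{2n}$ via $x_j\leftrightarrow(a_j,b_j)$ gives a homeomorphism carrying $\sD(S)$ onto the real positivity domain $\sD(\wtl S)$ of $\wtl S:=\phi^{-1}(S)$.

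Next I would check that $\phi$ respects all relevant cone structure. The key point is that $\phi$ identifies the real sum-of-squares cone with the Hermitian one $\Sigma[\bx,\bbx]$: writing the $(\ba,\bb)$-function attached to $q$ as $f+\i g$ with $f,g$ real gives $\phi^{-1}(q\overline q)=f^2+g^2$, so a Hermitian square is a sum of two real squares, while for real $f$ one has $f^2=\phi^{-1}(F\overline F)$ with $F:=\phi(f)$ Hermitian, so every real square comes from a Hermitian square. Consequently $\phi$ maps $\calM(\wtl S)$ bijectively onto $\calM(S)$, and since $\sum_j x_j\overline{x_j}=\sum_j(a_j^2+b_j^2)$ the complex Archimedean certificate $R-\sum_j x_j\overline{x_j}\in\calM(S)$ is exactly the image of the real one. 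Hence $\wtl L$ is nonnegative on the Archimedean real quadratic module $\calM(\wtl S)$.

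Part (i) then follows from the real form of Putinar's theorem \cite{Pu93}: $\wtl L$ has a representing measure $\wtl\mu$ supported on $\sD(\wtl S)$, and pushing $\wtl\mu$ through the homeomorphism $\R^{2n}\cong\C^n$ produces a measure $\mu$ on $\sD(S)$ with $L(p)=\int p\,d\mu$ (one checks this on Hermitian $p$, where it reads $\wtl L(\phi^{-1}p)=\int\phi^{-1}p\,d\wtl\mu$, and then extends by linearity). For part (ii) I would apply the real Tchakaloff theorem \cite{Tchakaloff} to $\wtl\mu$: Archimedeanity makes $\sD(\wtl S)$ compact, so all moments of $\wtl\mu$ are finite and Tchakaloff yields a finitely atomic measure $\sum_\ell\lambda_\ell\delta_{u_\ell}$ with $\lambda_\ell>0$ and $u_\ell\in\sD(\wtl S)$ matching every moment of $\wtl\mu$ up to degree $k$. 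Transporting the atoms back to $v_\ell\in\sD(S)$ and setting $\haL=\sum_\ell\lambda_\ell L_{v_\ell}$ gives a functional agreeing with $L$ on $\C[\bx,\bbx]_k$, since $\phi$ preserves degree and so degree-$k$ moments in $\bx,\bbx$ coincide with those in $\ba,\bb$.

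The substitution dictionary is routine bookkeeping; the one step that genuinely needs care, and is the crux of the reduction, is the exact identification of the Hermitian sum-of-squares cone with the real one in both directions, since it is precisely this that converts nonnegativity of $L$ on the complex quadratic module into nonnegativity of $\wtl L$ on the real one and thereby unlocks the real Positivstellensatz machinery. It is also worth noting that ``nonnegative on $\calM(S)$'' is meaningful only because every element of $\calM(S)$ is Hermitian, so that $L$ indeed returns real values there.
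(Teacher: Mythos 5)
Your proposal is correct and follows essentially the same route as the paper's own proof in Appendix \ref{sec:AppA}: the degree-preserving identification of $\Ccxbxh$ with $\Rxrxi$, the correspondence $\Re(p\overline p)=p_\Re^2+p_\Im^2$ between Hermitian and real sums of squares (and hence between $\calM(S)$ and $\calM^\R(S_\Re)$, including the Archimedean certificate), followed by the real theorems of Putinar and Tchakaloff and a pushforward of the measure/atoms through $\phi$. The only cosmetic difference is that you invoke Tchakaloff on the representing measure produced in part (i), whereas the paper cites a real statement (\cref{theomainTchakaloffREAL}(ii)) already packaged at the level of linear functionals; the substance is identical.
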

We will often apply the above theorem to a linear functional $L \in \C[\bx,\bbx]^*$ that additionally satisfies the positivity condition: $(G \otimes L) (p\overline p) \succeq 0$ for all $p \in \C[\bx,\bbx]$, for some Hermitian  polynomial matrix $G \in \C[\bx,\bbx]^{m \times m}$. Then, in view of \cref{lemGLpos} (combined with  \cref{eqMtGLa} and \cref{eqMtGL}), one may still apply Theorem \ref{theomainTchakaloff} after replacing the set $S$ by the set $S \cup S_{G}$ so that  the resulting measure $\mu$ will be supported by $\sD(S\cup S_G)\subseteq \{x: G(x)\succeq 0\}$, thus within the positivity domain  of $G$.

\paragraph{Matrix-valued linear functionals and matrix-valued measures.}

We now mention extensions of the previous results in Theorem \ref{theomainTchakaloff} from the scalar-valued  case to the matrix-valued case, that we will use for  the moment approach to the DPS hierarchy.

For the next result we use (a specification of) a result of Cimpric and Zalar \cite[Theorem~5]{CimpricZalar}, which shows an operator-valued version of Theorem \ref{theomainTchakaloff} (i).  Since the latter is stated in the real case we indicate in Appendix \ref{AppProofCimpric} how to derive from it its complex analog that we need for  the implication (ii) $\Rightarrow$ (i) in Theorem \ref{theomainmatrix} below. 
In a nutshell, this implication relies on a version of Riesz' representation theorem for positive operator valued linear maps (see, e.g., \cite{hadwin81}) combined with a density argument (for going from polynomials to continuous functions) and Putinar's Positivstellensatz.

\begin{theorem}[based on \cite{CimpricZalar}]\label{theomainmatrix}
Let $S\subseteq \C[\bx,\bbx]^h$ be a set of Hermitian polynomials such that the quadratic module $\calM(S)$ is Archimedean and let $\calL:\C[\bx,\bbx]\to \H^m$ be a Hermitian matrix-valued linear map. The following assertions are equivalent.
\begin{itemize}
\item[(i)] $\calL$ has a representing measure $\mu$ that is supported by $\sD(S)$ and takes its values in the cone $\H^m_+$ of $m\times m$ Hermitian positive semidefinite matrices.
\item[(ii)] $\calL$ is nonnegative on $\calM(S)$, i.e.,  $\calL(gp\overline p)\succeq 0$ for all $g\in S\cup\{1\}$ and $p\in \C[\bx,\bbx]$.
\item[(iii)] $M(g\calL)\succeq 0$ for all $g\in S\cup\{1\}.$
\item[(iv)] $g\calL$ is completely positive for all $g\in S\cup\{1\}.$
\end{itemize}
\end{theorem}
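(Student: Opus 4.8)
The plan is to establish the cycle of implications (i) $\Rightarrow$ (iii) $\Rightarrow$ (ii) $\Rightarrow$ (i) together with the separate equivalence (iii) $\Leftrightarrow$ (iv), so that all four conditions collapse to one. Three of these links follow directly from the lemmas of \cref{sec:matrix-setting} together with an elementary integration argument; only the implication (ii) $\Rightarrow$ (i) requires genuinely new input, namely the operator-valued Positivstellensatz of Cimpric and Zalar, and this is where the Archimedean hypothesis on $\calM(S)$ is used.

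First I would dispose of the integration direction (i) $\Rightarrow$ (iii). Assuming $\calL(p)=\int_{\sD(S)}p\,d\mu$ for a measure $\mu$ supported on $\sD(S)$ with values in $\H^m_+$, then for each $g\in S\cup\{1\}$ and each $w\in \C^m\otimes \C^{\N^n\times\N^n}_0$ I would expand $w^*M(g\calL)w$ as $\int_{\sD(S)}g(x)\,\psi(x)^*\,d\mu(x)\,\psi(x)$, where $\psi$ is the $\C^m$-valued polynomial built from $w$ and the monomial vector $\cxbx$ (the conjugations are routine). Since $g\ge 0$ on $\sD(S)$ and $d\mu(x)\succeq 0$, the integrand is pointwise nonnegative, whence $M(g\calL)\succeq 0$, i.e.\ (iii). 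To close the chain back to (ii) I would apply \cref{lemMGLtoGL} to the map $g\calL$: positivity of the moment matrix $M(g\calL)$ implies positivity of $g\calL$, which is precisely $\calL(gp\overline p)=(g\calL)(p\overline p)\succeq 0$, giving (iii) $\Rightarrow$ (ii).

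For the equivalence (iii) $\Leftrightarrow$ (iv), one direction is \cref{lemCPL} applied to $g\calL$ (complete positivity of $g\calL$ forces $M(g\calL)\succeq 0$). For the converse, $M(g\calL)\succeq 0\Rightarrow g\calL$ completely positive, I would verify that $(I_k\otimes g\calL)(\avec{q}\avec{q}^*)\succeq 0$ for every $k$ and every $\avec{q}\in\C[\bx,\bbx]^k$: contracting this $\C^{k\times k}\otimes\C^{m\times m}$-matrix against an arbitrary $w\in\C^k\otimes\C^m$ reorganizes, after collecting terms, into $\langle g\calL,\widetilde{\avec p}\,\widetilde{\avec p}^*\rangle$ for a polynomial vector $\widetilde{\avec p}\in\C[\bx,\bbx]^m$ depending linearly on $w$ and $\avec q$. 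This quantity is nonnegative by \cref{eq: M(ML) psd} of \cref{lemMGLpos}, so $M(g\calL)\succeq 0$ indeed yields complete positivity of $g\calL$.

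The main obstacle is the remaining implication (ii) $\Rightarrow$ (i), which has no elementary derivation from the preceding lemmas and is where compactness enters. Here I would appeal to the operator-valued Positivstellensatz of Cimpric and Zalar. Since their theorem is stated over the reals, the step requiring care is transporting it to the complex Hermitian setting of $\calL:\C[\bx,\bbx]\to\H^m$; I would defer this transfer to \cref{AppProofCimpric}. In outline, the Archimedean property makes $\sD(S)$ compact (from $R-\sum_i x_i\overline{x_i}\in\calM(S)$), a Riesz-type representation theorem for positive operator-valued functionals on $C(\sD(S))$ produces an $\H^m_+$-valued measure, a density argument passes from polynomials to continuous functions, and Putinar's Positivstellensatz certifies that nonnegativity of $\calL$ on $\calM(S)$ pins the support of this measure inside $\sD(S)$. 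This single implication closes the cycle (ii) $\Rightarrow$ (i) $\Rightarrow$ (iii) $\Rightarrow$ (ii), and together with (iii) $\Leftrightarrow$ (iv) it establishes the full equivalence.
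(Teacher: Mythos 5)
Your cycle (i) $\Rightarrow$ (iii) $\Rightarrow$ (ii) $\Rightarrow$ (i) is sound and essentially matches the paper: the integration argument for (i) $\Rightarrow$ (iii), the appeal to \cref{lemMGLtoGL} for (iii) $\Rightarrow$ (ii), and the deferral of (ii) $\Rightarrow$ (i) to the Cimpric--Zalar machinery in \cref{AppProofCimpric} (Riesz representation for positive operator-valued maps, density, Putinar) are all as in the paper. However, your claimed direct proof of (iii) $\Rightarrow$ (iv) has a genuine gap. Complete positivity of $g\calL$, in the sense used here, requires $(I_k\otimes g\calL)(P)\succeq 0$ for \emph{every} polynomial matrix $P\in\C[\bx,\bbx]^{k\times k}$ with $P(x)\succeq 0$ for all $x\in\C^n$ (this is exactly the class of positive elements the paper works with when proving (iv)). Your contraction of $(I_k\otimes g\calL)(\avec{q}\avec{q}^*)$ against $w\in\C^k\otimes\C^m$, which reorganizes into $\langle g\calL,\widetilde{\avec p}\,\widetilde{\avec p}^*\rangle\ge 0$, only certifies positivity on the cone of SoS-polynomial matrices. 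That cone is strictly smaller than the cone of pointwise positive semidefinite polynomial matrices: already for $k=1$ there are Hermitian polynomials nonnegative on all of $\C^n$ that are not sums of Hermitian squares (e.g.\ the Motzkin polynomial in the real variables $\xr,\xim$), and there are linear functionals nonnegative on $\Sigma$ that are negative on such a polynomial. So your argument in effect re-derives a $k$-fold version of $M(g\calL)\succeq 0$ rather than complete positivity; moreover, since it nowhere uses the Archimedean hypothesis, a correct proof along these lines cannot exist --- the paper explicitly remarks after the theorem that positivity of the moment matrix, positivity, and complete positivity are \emph{inequivalent} in general and only coincide in the Archimedean setting.

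The fix is the route the paper takes: derive (iv) from (i), not from (iii). Given the representing measure from (i) and a pointwise-PSD $P$ of degree at most $D$ (after multiplying by $g$), apply the matrix Tchakaloff theorem (\cref{theoTchakmatrix}) to the restriction of $\calL$ to $\C[\bx,\bbx]_D$ to obtain a finitely atomic representation with atoms $v_\ell\in\sD(S)$ and weights $\Lambda_\ell\in\H^m_+$, whence
\[
(I_k\otimes g\calL)(P)=\sum_{\ell=1}^K g(v_\ell)\,\Lambda_\ell\otimes P(v_\ell)\succeq 0,
\]
since $g(v_\ell)\ge 0$ and $P(v_\ell)\succeq 0$. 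The cycle then becomes (i) $\Rightarrow$ (iv) $\Rightarrow$ (iii) $\Rightarrow$ (ii) $\Rightarrow$ (i), with (iv) $\Rightarrow$ (iii) given by \cref{lemCPL} exactly as you use it. Since (i) already sits in your cycle, this repair costs nothing; your direct integration argument for (i) $\Rightarrow$ (iii) then becomes redundant (though it is correct).
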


\begin{proof}
First we show that (i) implies (iv). 
Let $k\in \N$, let $P=(p_{i'j'})_{i',j'=1}^k\in \C[\bx,\bbx]^{k\times k}$ be a polynomial matrix such that $P(x)\succeq 0$ for all $x\in \C^d$, and let $g\in S\cup \{1\}$;
we show that $(I_k\otimes g\calL)(P)\succeq 0$.
For this 
note that
$$(I_k\otimes g\calL)(P)=
(g\calL)(p_{i'j'}))_{i',j'=1}^k
= (\calL(gp_{i'j'}))_{i',j'=1}^k
= \Big(\int_{\sD(S)}  g p_{i'j'} d\mu\Big)_{i',j'=1}^k  \succeq 0.
$$
Here, the last inequality follows (for example) from \cref{theoTchakmatrix} below, using the fact that $g(x)\ge 0$ on $\sD(S)$, $P(x)=(p_{i'j'}(x))_{i',j'=1}^k \succeq 0$ for all $x$, and $\mu$ takes its values in $\H^m_+$. Indeed, say $D$ is an upper bound on the degree of $g(x)P(x)$. Then, by Theorem \ref{theoTchakmatrix} applied to $\calL$ restricted to $\C[\bx,\bbx]_D$, there exist an integer $K\in \N$, matrices $\Lambda_\ell\succeq 0$ and vectors $v_\ell\in \sD(S)$ (for $\ell\in [K])$) such that 
$ (\calL(gp_{i'j'}))_{i',j'=1}^k=\sum_{\ell=1}^K g(v_\ell) \Lambda_\ell \otimes P(v_\ell),$ which proves it is a positive semidefinite matrix.
The implication (iv) $\Longrightarrow$ (iii) follows from Lemma \ref{lemCPL} and (iii) $\Longrightarrow$ (ii) follows from Lemma \ref{lemMGLtoGL}.

Finally, for the implication (ii) $\Longrightarrow$ (i) we refer to the arguments in Appendix \ref{AppProofCimpric}.
\end{proof}

What the above result shows is that, while in general the notions of complete positivity, positivity and having a positive semidefinite moment matrix are not equivalent, these properties become equivalent when considering a linear map $\calL$ acting on an Archimedean quadratic module. We will apply these results  to the case of the quadratic module of the unit sphere (with $S=\{1-\sum_i x_i\overline{x_i}\}$) for the moment approach to the DPS hierarchy in Section \ref{sec:momentDPS}.

Finally, there is also an  analog of 
Theorem \ref{theomainTchakaloff} (ii) for the matrix-valued case.

\begin{theorem}[Kimsey \cite{Kimsey}] \label{theoTchakmatrix}
Let $S\subseteq \C[\bx,\bbx]^h$ be a set of Hermitian polynomials and let $\calL:\C[\bx,\bbx]\to \H^m$ be a Hermitian matrix-valued linear map. Assume $\calL$ has a representing measure supported by $\sD(S)$ and taking values in the cone $\H^m_+$. Then, for any integer $k\in N$, the restriction of $\calL$  to $\C[\bx,\bbx]_k$  has another representing measure that is finitely atomic; that is, there exists $K\in \N$, matrices $\Lambda_1,\ldots,\Lambda_K\in \H^m_+$ and vectors $v_1,\ldots,v_K\in \sD(S)$ such that 
$\calL(p)=\sum_{\ell=1}^K \Lambda_\ell p(v_\ell)$ for all polynomials $p\in \C[\bx,\bbx]_k$.
\end{theorem}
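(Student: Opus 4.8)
The plan is to derive the matrix-valued Tchakaloff theorem (\cref{theoTchakmatrix}) from its scalar-valued counterpart \cref{theomainTchakaloff}(ii) by a ``block-scalarization'' reduction, rather than reproving it from scratch. The key observation is that a Hermitian matrix-valued map $\calL:\C[\bx,\bbx]\to\H^m$ with a positive-semidefinite-matrix-valued representing measure $\mu$ can be encoded as a single scalar-valued functional on an enlarged polynomial ring by pairing it against test matrices. Concretely, I would introduce $m^2$ auxiliary commuting Hermitian variables (or equivalently consider the entries $L_{ij}$) and build a scalar functional $\widehat L$ on $\C[\bx,\bbx]\otimes\calH^m$ defined by $\widehat L(p\otimes E_{ij}) = L_{ij}(p)$, whose representing measure is the scalar measure $\Tr(E\,d\mu(x))$ on the product of $\sD(S)$ with the (compact) set of test matrices $E\succeq 0$, $\Tr(E)\le 1$.

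First I would set up this scalarized functional carefully and verify that nonnegativity of $\mu$ as an $\H^m_+$-valued measure translates exactly into nonnegativity of $\widehat L$ on the appropriate scalar quadratic module (the one generated by $S$ together with the semidefiniteness constraints $E\succeq 0$ encoded via $S_E$ as in \cref{eqSG}, plus the normalization). Next I would invoke \cref{theomainTchakaloff}(ii): for the fixed truncation degree $k$, there is a finitely atomic scalar measure $\sum_{\ell=1}^K \lambda_\ell L_{(v_\ell,E_\ell)}$ agreeing with $\widehat L$ up to degree $k$, supported at points $(v_\ell,E_\ell)$ with $v_\ell\in\sD(S)$ and $E_\ell\succeq 0$. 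Setting $\Lambda_\ell:=\lambda_\ell E_\ell\in\H^m_+$ and reading off the atoms in the original variables then yields $\calL(p)=\sum_{\ell=1}^K \Lambda_\ell\, p(v_\ell)$ for all $p\in\C[\bx,\bbx]_k$, which is exactly the claimed finite atomic representation.

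The main obstacle I anticipate is guaranteeing that the enlarged quadratic module remains Archimedean (so that \cref{theomainTchakaloff} applies) without assuming it for $\calM(S)$ itself --- note that, unlike \cref{theomainmatrix}, the statement of \cref{theoTchakmatrix} does \emph{not} require $\calM(S)$ to be Archimedean. This means the scalarization must be arranged so that compactness comes entirely from the existence of the representing measure $\mu$ rather than from an algebraic Archimedean certificate; in other words, I would work directly with the measure and apply a measure-theoretic Tchakaloff (the classical Bayer--Teichmann version, which only needs finiteness of the moments up to degree $k$ and no compactness beyond that of the support) rather than routing through the Putinar-based \cref{theomainTchakaloff}(ii). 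The cleanest route is therefore: push $\mu$ forward to the scalar measure $\nu$ on $\sD(S)\times\{E\succeq 0,\ \Tr E\le 1\}$, observe $\nu$ has finite $k$-th moments, apply scalar Tchakaloff to get finitely many atoms $(v_\ell,E_\ell)$ with positive weights, and repackage. The technical care lies in the bookkeeping between the $(i,j)$-indexed entrywise data and the tensor-product test-matrix formulation, and in checking that Hermiticity of $\calL$ is preserved so that each $\Lambda_\ell$ indeed lands in $\H^m_+$ rather than merely in $\calH^m$.
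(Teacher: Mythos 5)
The paper does not prove \cref{theoTchakmatrix} at all: it is imported as a black box from Kimsey \cite{Kimsey} (only \cref{theomainTchakaloff} and the implication (ii) $\Rightarrow$ (i) of \cref{theomainmatrix} are proved in Appendix A), so there is no internal proof to compare against. Judged on its own merits, your final route is essentially correct, and it is in fact a reconstruction of the standard proof of the operator-valued Tchakaloff theorem. Your decisive observation is the right one: \cref{theomainTchakaloff}(ii) is unusable here because its Archimedean hypothesis (which forces $\sD(S)$ compact and lets the classical Tchakaloff theorem suffice) is absent from \cref{theoTchakmatrix}, so compactness must come from the given measure itself, via the Bayer--Teichmann form of Tchakaloff's theorem, which needs only finitely many finite moments and places atoms inside the support. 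Your first, algebraic variant would also fail for a second reason worth recording: $\widehat L(p\otimes E_{ij}) := L_{ij}(p)$ prescribes values only on polynomials of degree at most one in the matrix variables, whereas any Positivstellensatz-based argument needs a functional on the full ring in $(x,E)$; the higher $E$-moments can only be supplied by a measure, which is exactly what your pivot provides.

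Two points in the final route should be made precise. First, ``the scalar measure $\Tr(E\,d\mu(x))$ on the product'' does not literally define a measure on $\sD(S)\times\{E\succeq 0,\ \Tr E\le 1\}$; the correct construction is a disintegration. Set $\sigma := \Tr\mu$, a finite positive scalar measure (finite because $\sigma(\sD(S)) = \Tr\calL(1) < \infty$); each entry $\mu_{ij}$ is absolutely continuous with respect to $\sigma$ (for $E\succeq 0$ one has $|E_{ij}|\le \Tr E$), so $M(x) := \frac{d\mu}{d\sigma}(x)$ exists, is measurable, and satisfies $M(x)\succeq 0$, $\Tr M(x) = 1$ $\sigma$-a.e.; then take $\nu$ to be the pushforward of $\sigma$ under $x\mapsto (x, M(x))$, whose support lies in the closed set $\sD(S)\times\{E\succeq 0,\ \Tr E = 1\}$ (note $\sD(S)$ is closed as a positivity domain). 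Second, the bookkeeping: realify both the complex variables (as in Appendix A) and the Hermitian matrix coordinates, and apply Bayer--Teichmann at degree $k+1$, since the needed monomials $\bx^\alpha\bbx^{\alpha'}E_{ij}$ with $|\alpha+\alpha'|\le k$ have joint degree at most $k+1$; all these moments are finite because $\mu$ represents $\calL$ on all of $\C[\bx,\bbx]$ and the $E$-coordinates range over a bounded set. With atoms $(v_\ell,E_\ell)$ in the support of $\nu$ and weights $\lambda_\ell>0$, each $E_\ell$ is automatically Hermitian positive semidefinite and $v_\ell\in\sD(S)$, so $\Lambda_\ell := \lambda_\ell E_\ell\in\H^m_+$ and $\calL(p)=\sum_\ell \Lambda_\ell\, p(v_\ell)$ on $\C[\bx,\bbx]_k$; this settles the Hermiticity worry you raise at the end without any extra argument.
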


\section{A hierarchy of lower bounds on the separable rank} \label{sec: LB}

In this section, we show how to use the polynomial optimization techniques developed in the previous section in order  to obtain a hierarchy of lower bounds on the separable rank. 

\subsection{The parameter $\tsep$}\label{sec:tausep}

Consider  a separable state  $\rho \in \SEP_d$. As defined earlier,  its separable rank is the smallest integer  $r\in\N$ for which there exist (nonzero)  vectors $a_1,\ldots, a_r, b_1,\ldots,b_r \in \C^d$  such that 
\begin{equation}\label{eqseprank}
\rho = \sum_{\ell=1}^r a_\ell a_\ell^* \otimes b_\ell b_\ell^*.
\end{equation}
We mention several properties that are satisfied by the vectors $a_\ell,b_\ell$ entering such a decomposition.
First of all, the vectors $a_\ell,b_\ell$ clearly satisfy the positivity condition
\begin{equation}\label{eqabpsd}
\rho- a_\ell a_\ell^*\otimes b_\ell b_\ell^*\succeq 0 \quad \text{ for all }\ell\in [r].
\end{equation}
Let
$$\rho_{\max}:=\max_{i,j\in [d]} \rho_{ij,ij}$$
denote the maximum diagonal entry of $\rho$. 
Then, in view of (\ref{eqabpsd}), the vectors $a_\ell,b_\ell$ also satisfy $|(a_\ell)_i|^2 |(b_\ell)_j|^2 \le \rho_{ij,ij}$ for all $i,j\in [d]$,
which implies the following boundedness conditions
\begin{equation}\label{eqbound}
\|a_\ell \|_\infty^2 \cdot \|b_\ell\|^2_\infty \le \rho_{\max} \quad \text{ and }
\quad 
\|a_\ell \|_2^2 \cdot \|b_\ell \|^2_2\le \Tr(\rho)\quad \text{ for all } \ell\in [r].
\end{equation}
Note that we may rescale the vectors $a_\ell,b_\ell$ so that additional properties can be assumed. For instance we may rescale them so that $\|a_\ell\|_\infty=\|b_\ell\|_\infty$, in which case we may assume without loss of generality that 
\begin{equation}\label{eqabmax}
\|a_\ell\|^2_\infty, \|b_\ell\|^2_\infty \le \sqrt{\rho_{\max}} \quad \text{ for all } \ell\in [r].
\end{equation}
Another possibility is  rescaling so that $\|a_\ell\|_2=\|b_\ell\|_2$, in which case we could instead assume  that
\begin{equation}\label{eqab2}
\|a_\ell\|_2^2 = \|b_\ell\|_2^2 \le \sqrt {\Tr(\rho)} \quad \text{ for all } \ell\in [r].
\end{equation}
Yet another possibility would be to rescale  so that $\|b_\ell\|_2=\sqrt{\Tr(\rho)}$ for all $\ell$, in which case we would have
\begin{equation}\label{eqbnorm1a}
 \|a_\ell\|_2^2 \le \sqrt{\Tr(\rho)}, \  \|b_\ell\|_2=\sqrt{\Tr(\rho)}\ \text{ for all } \ell\in [r]
\end{equation}
or, equivalently (up to rescaling), we may assume that 
\begin{equation}\label{eqbnorm1}
 \|a_\ell\|_2^2 \le \Tr(\rho), \ \|b_\ell\|_2=1\ \ \text{ for all } \ell\in [r].
\end{equation}
To fix ideas we will now  apply the first rescaling (\ref{eqabmax}), so that  each $(a_\ell,b_\ell)$ belongs to the set 
\begin{equation} \label{eq:set}
\calV_\rho:= \Big\{ (x,y) \in \C^d \times \C^d ~|~ xx^* \otimes yy^* \preceq \rho, \ \|x\|_\infty, \|y\|_\infty \leq \rho_{\max}^{1/4} \Big\}.
\end{equation}
We will consider the impact of doing other rescalings  as in \cref{eqab2}, \cref{eqbnorm1a} or \cref{eqbnorm1} later on in the paper in numerical examples. 
However, as will be noted in Remark \ref{rembound}, the localizing constraints corresponding to the  
scaling (\ref{eqabmax})
already imply the localizing constraints corresponding to the inequalities in~(\ref{eqbound}). 

From \cref{eqseprank} we  have
$${1\over r}\rho ={1\over r}\sum_{\ell=1}^r a_\ell a_\ell^*\otimes b_\ell b_\ell^* \in \conv\{xx^*\otimes yy^*: (x,y)\in \calV_\rho\},
$$
which motivates defining the following parameter
\begin{equation} \label{eq:tausep}
\tausep(\rho) := \inf\Big\{\lambda: \lambda>0,  {1\over \lambda} \rho \in \conv \{xx^*\otimes yy^*: (x,y)\in \calV_\rho\}\Big \}.
\end{equation}
From the above discussion, this parameter gives a lower bound on the separable rank.

\begin{lemma}\label{tauleqsep}
	For any  $\rho \in \SEP_d$, we have $\tsep(\rho) \leq \seprank(\rho) $. Moreover, if $\rho\not\in\SEP_d$ then $\tsep(\rho)=\seprank(\rho)=\infty$.
\end{lemma}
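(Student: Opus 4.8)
The plan is to prove the two claims of \cref{tauleqsep} separately, exploiting the characterization of $\tsep$ as the optimal value of an infimum over convex-combination scalings in \cref{eq:tausep}.

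For the inequality $\tsep(\rho)\le\seprank(\rho)$, I would first dispose of the trivial case $\seprank(\rho)=\infty$, for which the bound holds vacuously. Otherwise let $r:=\seprank(\rho)<\infty$ and take an optimal decomposition $\rho=\sum_{\ell=1}^r a_\ell a_\ell^*\otimes b_\ell b_\ell^*$ as in \cref{eqseprank}. The key observation, already recorded in the discussion preceding the lemma, is that after the rescaling leading to \cref{eqabmax} each pair $(a_\ell,b_\ell)$ lies in the set $\calV_\rho$ of \cref{eq:set}: the positivity condition \cref{eqabpsd} gives $a_\ell a_\ell^*\otimes b_\ell b_\ell^*\preceq\rho$, and the $\ell_\infty$-bound $\|a_\ell\|_\infty,\|b_\ell\|_\infty\le\rho_{\max}^{1/4}$ follows from \cref{eqabmax}. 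I must be careful that rescaling the $a_\ell,b_\ell$ does not change the rank-one terms $a_\ell a_\ell^*\otimes b_\ell b_\ell^*$, so the identity \cref{eqseprank} is preserved; this is immediate since $(ca_\ell)(ca_\ell)^*\otimes(c^{-1}b_\ell)(c^{-1}b_\ell)^*=a_\ell a_\ell^*\otimes b_\ell b_\ell^*$. Then dividing by $r$ yields
\begin{equation*}
\tfrac{1}{r}\rho=\sum_{\ell=1}^r\tfrac{1}{r}\,a_\ell a_\ell^*\otimes b_\ell b_\ell^*\in\conv\{xx^*\otimes yy^*:(x,y)\in\calV_\rho\},
\end{equation*}
which exhibits $\lambda=r$ as a feasible value in the infimum of \cref{eq:tausep}. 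Hence $\tsep(\rho)\le r=\seprank(\rho)$.

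For the second claim I would show the contrapositive-flavored statement: if $\rho\notin\SEP_d$ then both quantities equal $\infty$. That $\seprank(\rho)=\infty$ for entangled $\rho$ is the defining convention. To see $\tsep(\rho)=\infty$, I would argue that the feasible set of \cref{eq:tausep} is empty. Indeed, for any $(x,y)\in\calV_\rho$ the rank-one matrix $xx^*\otimes yy^*$ lies in $\SEP_d$ by definition \cref{SEP} (up to normalization), so any finite convex combination $\sum_k\mu_k\, x_k x_k^*\otimes y_k y_k^*$ belongs to the convex cone $\SEP_d$; consequently $\tfrac{1}{\lambda}\rho\in\SEP_d$ would force $\rho\in\SEP_d$ since $\SEP_d$ is a cone, contradicting $\rho\notin\SEP_d$. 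Thus no $\lambda>0$ is feasible, the infimum over the empty set is $+\infty$, and $\tsep(\rho)=\infty=\seprank(\rho)$.

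I do not expect a serious obstacle here; the lemma is essentially a bookkeeping consequence of the rescaling discussion and the conic structure of $\SEP_d$. The one point deserving care is the membership $(a_\ell,b_\ell)\in\calV_\rho$: I should verify both defining constraints of \cref{eq:set} explicitly rather than citing them, since \cref{eq:set} uses the specific scaling \cref{eqabmax} and the reader needs to see that the optimal separable decomposition can be normalized into this set without increasing $r$. A secondary subtlety is the closedness of the convex hull in \cref{eq:tausep}: for the upper bound only a single finite convex combination is needed, so no closure argument is required, and for the emptiness argument I only use that each generator lies in the cone $\SEP_d$, which is preserved under finite convex (indeed conic) combinations. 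Everything else is routine.
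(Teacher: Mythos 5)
Your proposal is correct and follows essentially the same route as the paper, which proves this lemma via the discussion immediately preceding it: the rescaling \cref{eqabmax} (justified by \cref{eqabpsd} and \cref{eqbound}) places each $(a_\ell,b_\ell)$ in $\calV_\rho$ without changing the rank-one terms, so that ${1\over r}\rho$ witnesses feasibility of $\lambda=r$ in \cref{eq:tausep}. Your added emptiness argument for the entangled case (each generator lies in the cone $\SEP_d$, so a feasible $\lambda$ would force $\rho\in\SEP_d$) is the intended, routine justification of the second claim.
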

The parameter $\tsep(\rho)$ does not seem any easier to compute than the separable rank.
It, however, enjoys an additional  convexity property that the  combinatorial parameter $\seprank(\rho)$ does not have.
In the next section, we will present a hierarchy  of lower bounds on $\seprank(\rho)$, constructed using  tools from polynomial optimization.
These bounds arise from convex (semidefinite) programs, they in fact also lower bound the (weaker) parameter $\tsep(\rho)$ and will be shown to asymptotically converge to it.

\subsection{Polynomial optimization approach  for $\tausep$ and $\seprank$} \label{sec: poly sep}

As above, let $\rho\in \SEP_d$ be given, together with a decomposition (\ref{eqseprank}) with $r=\seprank(\rho)$, where we assume that the points $(a_\ell,b_\ell)$ belong to the set $\calV_\rho$ in (\ref{eq:set}).
We explain how to define bounds for $\seprank(\rho)$ by using the moment method from Section \ref{sec:moment}.

For this let us consider the linear functional
\begin{equation} \label{eq:defL}
L = \sum_{\ell=1}^r L_{(a_\ell,b_\ell)},
\end{equation}
the sum of the evaluation functionals at the points entering the decomposition (\ref{eqseprank}). Then $L$  acts on the polynomial space $\C[\bx,\by,\bbx,\bby]$, where it is now convenient to denote the $2d$  variables as  $\bx=(x_1,\ldots,x_d)$ and $\by=(y_1,\ldots,y_d)$, 
corresponding to the `bipartite' structure in \cref{eqseprank}.
By construction,  $L$ corresponds to a finite atomic measure supported on the set  $\calV_\rho$. Moreover we have 
$$L(1) = \sum_{\ell=1}^r L_{(a_\ell,b_\ell)}(1) = \sum_{\ell=1}^r 1 = r=\seprank(\rho)$$ and the fourth-degree moments are given by the entries of $\rho$:
$$L(\bx\bx^* \otimes \by\by^*)=\rho.
$$
In addition, since each $(a_\ell,b_\ell)$ belongs to the set $\calV_\rho$,  it follows that
$$M(G_\rho \otimes L)=L(G_\rho\otimes \monoV \monoV^*) \succeq 0 \quad \text{ and } \quad L \ge 0 \text{ on } \calM(S_\rho),$$
after defining  the  Hermitian polynomial matrix
\begin{equation}\label{eqGrho}
\Gr(\bx,\by) := \rho - \bx \bx^* \otimes \by \by^* \in \C[\bx,\by,\bbx,\bby]^{d^2\times d^2}_4
\end{equation} 
and the localizing set of Hermitian polynomials 
\begin{equation}\label{eqSrho}
S_\rho=\Big\{ \sqrt{\rho_{\max}}-x_i\overline x_i, \sqrt{\rho_{\max}}-y_i\overline y_i : i\in [d]\Big\}\subseteq \C[\bx,\by,\bbx,\bby]^h_2.
\end{equation}
To see that $M(G_\rho\otimes L)\succeq 0$ we use Remark \ref{rem: M(GL) product and psd}.
Recall also the definition of the  set $S_{G_\rho}$ of localizing polynomials corresponding to the polynomial matrix $G_\rho$ in (\ref{eqGrho}):
 $$S_\Gr = \{v^* \Gr v: v \in \C^d \otimes \C^d\}\subseteq \C[\bx,\by,\bbx,\bby]^h_4.$$
Then, by construction,   the combined positivity domains of the sets $S$ and $ S_{\Gr}$ recover  $\calV_\rho$:
\[
\sD(S_\Gr \cup S_\rho)  = \calV_\rho=\Big\{ (x,y) \in \C^d \times \C^d ~|~ xx^* \otimes yy^* \preceq \rho, \ \|x\|_\infty, \|y\|_\infty \leq \rho_{\max}^{1/4} \Big\}.
\] 
\begin{remark}\label{rembound}
Note that the localizing constraints for the inequalities in (\ref{eqbound}) are implied by the localizing constraints for $S_\rho \cup S_{G_\rho}$.
This follows from the following two identities:
$$\rho_{\max}-x_i\ol x_i y_j\ol y_j= (\sqrt{ \rho_{\max}} -x_i\ol x_i)y_j\ol y_j +\sqrt{\rho_{\max}}(\sqrt{\rho_{\max}}-y_j\ol y_j) \in \calM(S_\rho)_4,$$
$$\Tr(\rho)-(\sum_i x_i\ol x_i)(\sum_j y_j\ol y_j)= \sum_{i,j} (\rho_{ij,ij} -x_i\ol x_i y_j\ol y_j ) \in \calM(S_{G_{\rho}})_4.$$
\end{remark}

Moreover, let us recall for future reference that 
\begin{equation}\label{eqMGLtoGL}
M(G_\rho\otimes L)\succeq 0\  \Longrightarrow\  L\ge 0 \text{ on } \calM(S_{G_\rho}),
\end{equation}
which follows from \cref{lemGLpos} and the characterization of positivity of $G_\rho\otimes L$ from \cref{eqMtGL}.
The above observations  motivate introducing the following parameters. For   $t \in \N \cup \{\infty \}$ with $t\ge 2$,  define the parameter	
	\begin{equation} \label{eqxit}
	\begin{split}
	\xidsep{t} := \inf \Big\{ L(1) ~|~ &L: \polyS_{2t}\to\C \ \text{ Hermitian\ \ s.t. }  \\
	& L(\bx\bx^*\otimes \by\by^*) = \rho, \\ 
	& L \geq 0 \text{ on }  \calM(S_\rho)_{2t}, \\
	& M_{t-2}(G_\rho\otimes L)=L(\Gr \otimes \monoV_{t-2} \monoV_{t-2}^*) \succeq 0\Big\}.
	\end{split}
	\end{equation}
For $t=\infty$ the  parameter $\xidsep{\infty}$ involves linear functionals acting on the full polynomial space  $\polyS$.
In addition, we let $\xidsep{*}$ denote  the parameter obtained by adding the constraint $\rank(M(L)) < \infty$ to the definition of  $\xidsep{\infty}$. 
One can show that the function  $\rho \mapsto \xidsep{t}$ is lower semicontinuous, the proof is analogous to that of \cite[Lemma~7]{GdLL17a} and thus omitted.  {In addition, as we will see in \cref{rem: level 2 implies ppt}, if the program defining $\xidsep{t}$ ($t\ge 2$) is feasible then $\rho$ satisfies the PPT criterion, i.e., $\rho^{T_B}\succeq 0$, where $\rho^{T_B}$ is obtained by taking the partial transpose of $\rho$ on the second register (see  (\ref{eqPPTB})). }

As is well-known, for finite $t \in \N$, the bound $\xidsep{t}$ can be expressed as a semidefinite program since  nonnegativity of $L$ on the truncated quadratic module $\calM(S_\rho)_{2t}$ can be encoded through positive semidefiniteness  of the moment matrix $M_t(L) = L([\bx,\by,\bbx,\bby]_{t}[\bx,\by,\bbx,\bby]_{t}^*)$ and of the localizing moment matrices $M_{t-1}(gL)=L(g[\bx,\by,\bbx,\bby]_{t-1}[\bx,\by,\bbx,\bby]_{t-1}^*)$ for all $g \in S_\rho$. 

By the above discussion, for any $\rho\in \SEP_d$  we have the following chain of inequalities:
\begin{equation}\label{eqchain}
\xidsep{2} \leq \xidsep{3} \leq \cdots \leq  \xidsep{\infty} \leq \xidsep{*}  \leq \seprank(\rho)  < \infty.
\end{equation}
We will now  show that the bounds $\xidsep{t}$ in fact converge to the parameter $\tausep(\rho)$. In a first step we observe that the parameters $\xidsep{t}$ converge to $\xidsep{\infty}$ and after that we show that $\xidsep{\infty}=\xidsep{*}=\tsep(\rho)$.

\begin{lemma}\label{lemconv}
Let $\rho\in\SEP_d$.  The infimum is attained in problem \eqref{eqxit} for any integer $t\ge 2$ or $t=\infty$, and we have $\lim_{t\to\infty}\xidsep{t}=\xidsep{\infty}$. 
\end{lemma}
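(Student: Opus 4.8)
The plan is to prove the two assertions---attainment of the infimum and convergence of the bounds---by combining the compactness furnished by \cref{pointconv} with the fact that all constraints defining \eqref{eqxit} are preserved under pointwise limits. The essential preliminary observation is that the quadratic module $\calM(S_\rho)$ is Archimedean with a degree-$2$ certificate: summing the generators in \eqref{eqSrho} gives
$$\sum_{i=1}^d\big(\sqrt{\rho_{\max}}-x_i\ol x_i\big)+\sum_{i=1}^d\big(\sqrt{\rho_{\max}}-y_i\ol y_i\big)=2d\sqrt{\rho_{\max}}-\sum_{i=1}^d(x_i\ol x_i+y_i\ol y_i)\in\calM(S_\rho)_2,$$
so \cref{pointconv} applies with $R=2d\sqrt{\rho_{\max}}$. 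Consequently, any feasible $L$ for \eqref{eqxit} satisfies $|L(w)|\le R^{|w|/2}L(1)$ for all monomials $w$ of degree at most $2t$, which bounds all moments of $L$ in terms of the objective value $L(1)$.

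First I would handle attainment for finite $t$. The program \eqref{eqxit} is feasible (by the chain \eqref{eqchain}, the functional $L$ from \eqref{eq:defL} is feasible at every level), and $L(1)\ge 0$ since $L(1)$ is the leading diagonal entry of the positive semidefinite matrix $M_t(L)$. Taking a minimizing sequence $L^{(k)}$, the bound above confines the finitely many moments $L^{(k)}(w)$ with $\deg w\le 2t$ to a compact set, so a subsequence converges to some $L^\star$. Since the linear constraint $L(\bx\bx^*\otimes\by\by^*)=\rho$, the positivity of $L$ on $\calM(S_\rho)_{2t}$ (equivalently $M_t(L)\succeq 0$ and $M_{t-1}(gL)\succeq 0$ for $g\in S_\rho$), and the condition $M_{t-2}(\Gr\otimes L)\succeq 0$ are all closed conditions on the moments, $L^\star$ is feasible and attains $\xidsep{t}$. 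For $t=\infty$ the same argument applies, except that the compactness step now uses the second part of \cref{pointconv}: a minimizing sequence with bounded objective has a pointwise convergent subsequence in $\polyS^*$, whose limit is feasible for the infinite-level program and attains $\xidsep{\infty}$.

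For the convergence $\lim_{t\to\infty}\xidsep{t}=\xidsep{\infty}$, the inequality $\lim_{t\to\infty}\xidsep{t}\le\xidsep{\infty}$ is immediate from the monotone chain \eqref{eqchain}. For the reverse inequality I would take, for each finite $t\ge 2$, an optimal solution $L_t$ (which exists by the first part), so that $L_t(1)=\xidsep{t}\le\xidsep{\infty}<\infty$. Since $\sup_t L_t(1)<\infty$, \cref{pointconv} yields a subsequence converging pointwise to some $L^\star\in\polyS^*$, with $L^\star(1)=\lim_t\xidsep{t}$. The limit $L^\star$ is feasible for the infinite-level program: the equality constraint involves only fourth-degree moments and is preserved in the limit; for positivity, any fixed element $gp\ol p\in\calM(S_\rho)$ has bounded degree, so $L_t(gp\ol p)\ge 0$ holds for all large $t$ and passes to $L^\star$; and any finite principal submatrix of $M(\Gr\otimes L^\star)$ is the limit of the corresponding positive semidefinite submatrices of $M_{t-2}(\Gr\otimes L_t)$, hence positive semidefinite. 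Therefore $\xidsep{\infty}\le L^\star(1)=\lim_t\xidsep{t}$, giving equality.

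The main obstacle is the feasibility verification of the limit functional $L^\star$ in the convergence step, specifically that the truncated constraints satisfied by the $L_t$ at level $t$ assemble, in the limit, into the full infinite-level constraints. This is handled by the standard principle that each condition in the infinite-level program is a condition on finitely many moments (or on a finite principal submatrix), and is hence already imposed at all sufficiently large finite levels $t$; pointwise convergence then transfers it to $L^\star$. All other steps are routine once the Archimedean bound of \cref{pointconv} supplies the requisite compactness.
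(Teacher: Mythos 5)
Your proof is correct and follows essentially the same route as the paper: the Archimedean certificate $2d\sqrt{\rho_{\max}}-\sum_i(x_i\ol x_i+y_i\ol y_i)\in\calM(S_\rho)_2$ makes \cref{pointconv} applicable, attainment follows by compactness of the bounded moment sequences, and convergence follows by extracting a pointwise limit of optimal solutions $L_t$ and checking it is feasible for $\xidsep{\infty}$. Your added detail on why each infinite-level constraint (involving only finitely many moments or finite principal submatrices) passes to the limit is a point the paper leaves implicit, but the argument is the same.
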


\begin{proof}
First we show that problem \eqref{eqxit} attains its optimum. For this note that, in view of  \cref{eqchain}, we may restrict the optimization  to linear functionals $L$ satisfying $L(1) \le \seprank(\rho)$. 
By the definition of $S_\rho$ in \eqref{eqSrho}, the quadratic module $\calM(S_\rho)$ is Archimedean since, with $R=2d\sqrt{\rho_{\max}}$,  $R-\sum_{i=1}^d (x_i\overline x_i + y_i\overline y_i )\in  \calM(S_\rho)_2$. As  $L$ is nonnegative on $\calM(S_\rho)_{2t}$, we can apply \cref{pointconv}  
	and  conclude that 	$$
	|L(w)| \leq R^{|w|/2} L(1) ~~\text{for any}~~ w \in \monoV_{2t}.
	$$
Hence we are optimizing a linear objective function over a compact set, and thus the optimum is attained.
So, for each integer $t\ge 2$, let $L_t$ be an optimum solution of problem \eqref{eqxit}. As $\sup_t L_t(1)\le \seprank(\rho)<\infty$, we can conclude from \cref{pointconv} that there exists a linear functional $L\in \C[\bx,\by,\bbx,\bby]^*$ which is the limit of a subsequence of the sequence $(L_t)_t$. Then  $L$ is feasible for $\xidsep{\infty}$, which implies
$\xidsep{\infty}\le L(1) =\lim_{t\to\infty}L_t(1)=\lim_t\xidsep{t}$. Note that this $L$ is optimal for $\xidsep{\infty}$.
\end{proof}

\begin{lemma}\label{lemallequal}
For any $\rho\in \calH^d\otimes \calH^d$ we have $\xidsep{\infty} = \xidsep{*}= \tausep(\rho)$. 
\end{lemma}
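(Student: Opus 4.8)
The plan is to prove the two nontrivial inequalities $\tausep(\rho)\le \xidsep{\infty}$ and $\xidsep{*}\le\tausep(\rho)$ and to combine them with the trivial bound $\xidsep{\infty}\le\xidsep{*}$ (which is part of \eqref{eqchain}, since $\xidsep{*}$ is obtained from $\xidsep{\infty}$ by adding the constraint $\rank(M(L))<\infty$). Reading all three quantities as elements of $[0,+\infty]$ (with value $+\infty$ when the relevant program is infeasible), these give the sandwich $\tausep(\rho)\le\xidsep{\infty}\le\xidsep{*}\le\tausep(\rho)$, which forces equality and automatically covers the entangled/infeasible case where all three equal $\infty$. Both nontrivial inequalities rest on the same bridge: passing between feasible functionals of \eqref{eqxit} and finite atomic measures supported on $\calV_\rho$.

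For $\tausep(\rho)\le\xidsep{\infty}$ I may assume $\xidsep{\infty}<\infty$, so by \cref{lemconv} the infimum is attained by some Hermitian $L$ with $L(1)=\xidsep{\infty}$, $L(\bx\bx^*\otimes\by\by^*)=\rho$, $L\ge0$ on $\calM(S_\rho)$, and $M(G_\rho\otimes L)\succeq0$. The crucial step is to merge the two positivity conditions. The matrix constraint $M(G_\rho\otimes L)\succeq0$ implies, through \eqref{eqMtGLa}--\eqref{eqMtGL} (equivalently \eqref{eqMGLtoGL}), that $L(v^*G_\rho v\cdot p\overline p)\ge0$ for all $v$ and $p$, i.e. $L\ge0$ on $\calM(S_{G_\rho})$; combined with $L\ge0$ on $\calM(S_\rho)$ this yields $L\ge0$ on $\calM(S_\rho\cup S_{G_\rho})$, which is Archimedean because $\calM(S_\rho)$ already is. Applying \cref{theomainTchakaloff}(ii) with $k=4$ then produces a finite atomic functional $\widehat L=\sum_{\ell=1}^{K}\lambda_\ell L_{(a_\ell,b_\ell)}$ with $\lambda_\ell>0$ and $(a_\ell,b_\ell)\in\sD(S_\rho\cup S_{G_\rho})=\calV_\rho$ that agrees with $L$ on all polynomials of degree at most $4$. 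Reading off the constant term and the degree-$4$ moments gives $\sum_\ell\lambda_\ell=\xidsep{\infty}$ and $\sum_\ell\lambda_\ell\,a_\ell a_\ell^*\otimes b_\ell b_\ell^*=\rho$, so $\tfrac1{\xidsep{\infty}}\rho$ is an explicit convex combination of matrices $xx^*\otimes yy^*$ with $(x,y)\in\calV_\rho$; by the definition \eqref{eq:tausep} this gives $\tausep(\rho)\le\xidsep{\infty}$.

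For $\xidsep{*}\le\tausep(\rho)$ I may assume $\tausep(\rho)<\infty$, and it suffices to show $\xidsep{*}\le\lambda$ for every $\lambda>0$ feasible in \eqref{eq:tausep}. Such a $\lambda$ provides a finite convex combination $\tfrac1\lambda\rho=\sum_{\ell=1}^{K}\gamma_\ell\,a_\ell a_\ell^*\otimes b_\ell b_\ell^*$ with $(a_\ell,b_\ell)\in\calV_\rho$. Setting $L:=\lambda\sum_{\ell=1}^{K}\gamma_\ell L_{(a_\ell,b_\ell)}$ gives a conic combination of evaluation functionals at points of $\calV_\rho$, and I verify it is feasible for $\xidsep{*}$: we have $L(1)=\lambda$ and $L(\bx\bx^*\otimes\by\by^*)=\rho$; since $\calV_\rho\subseteq\sD(S_\rho)$, each evaluation functional and hence $L$ is nonnegative on $\calM(S_\rho)$; since $(a_\ell,b_\ell)\in\calV_\rho$ means $G_\rho(a_\ell,b_\ell)=\rho-a_\ell a_\ell^*\otimes b_\ell b_\ell^*\succeq0$, \cref{rem: M(GL) product and psd} gives $M(G_\rho\otimes L_{(a_\ell,b_\ell)})\succeq0$ and therefore $M(G_\rho\otimes L)\succeq0$; finally $L$ is a finite combination of evaluation functionals, so $M(L)$ has finite rank. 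Thus $L$ is feasible for $\xidsep{*}$ with objective value $\lambda$, whence $\xidsep{*}\le\lambda$; taking the infimum over feasible $\lambda$ yields $\xidsep{*}\le\tausep(\rho)$.

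The main obstacle is the second step of the first inequality, namely correctly combining the scalar localizing constraint $L\ge0$ on $\calM(S_\rho)$ with the polynomial-matrix constraint $M(G_\rho\otimes L)\succeq0$ into a single Archimedean positivity statement on $\calM(S_\rho\cup S_{G_\rho})$, so that the representation theorem returns atoms lying inside $\calV_\rho$ (where $G_\rho\succeq0$) rather than merely in $\sD(S_\rho)$; this is exactly what the positivity constraint was designed to enforce. Everything else reduces to routine verification that the constructed functionals satisfy the defining constraints of \eqref{eqxit}.
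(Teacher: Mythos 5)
Your proposal is correct and follows essentially the same route as the paper's proof: the same splitting into $\xidsep{*}\le\tausep(\rho)$ (via the conic combination of evaluation functionals at points of $\calV_\rho$, whose moment matrix has finite rank) and $\tausep(\rho)\le\xidsep{\infty}$ (via \cref{eqMGLtoGL} to get $L\ge 0$ on $\calM(S_\rho\cup S_{G_\rho})$, followed by \cref{theomainTchakaloff}(ii) with $k=4$ to extract atoms in $\sD(S_\rho\cup S_{G_\rho})=\calV_\rho$). One cosmetic remark: your appeal to \cref{lemconv} for attainment of the infimum is unnecessary (and that lemma is stated only for $\rho\in\SEP_d$, which you have not yet established at that point); as in the paper, it suffices to show $\tausep(\rho)\le L(1)$ for \emph{every} feasible $L$ and then take the infimum.
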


\begin{proof}
As  $\xidsep{\infty}\le \xidsep{*}$  it suffices to show that $\xidsep{*}\le \tsep(\rho)$ and $\tsep(\rho)\le \xidsep{\infty}$.

First we show $\xidsep{*}\le \tsep(\rho)$. If $\tsep(\rho)=\infty$ there is nothing to prove. So assume we have a feasible solution: $\rho=\lambda \sum_{\ell=1}^K \mu_l a_\ell a_\ell^*\otimes b_\ell b_\ell^*$, where $\lambda>0$, $(a_\ell,b_\ell)\in \calV_\rho$, $\mu_\ell>0$ and $\sum_\ell \mu_\ell=1$.
Define the linear functional $L=\lambda\sum_{\ell=1}^K\mu_\ell L_{(a_\ell,b_\ell)}$. Then $L$ is feasible for $\xidsep{*}$ with $L(1)=\lambda$. Hence, $\xidsep{*}\le L(1)=\lambda$, which shows $\xidsep{*}\le \tsep(\rho)$.

Now we show $\tausep(\rho) \leq \xidsep{\infty}$. If $\xidsep{\infty}=\infty$ there is nothing to prove. 
So assume $L$ is a feasible solution to $\xidsep{\infty}$.
Then, in view of \cref{eqMGLtoGL},  $L\ge 0$ on $\calM(S_\rho \cup S_{G_\rho})$. As $\calM(S_\rho)$ is Archimedean we can apply \cref{theomainTchakaloff} (with $k=4$) and conclude that the restriction of $L$ to  $\C[\bx,\by,\bbx,\bby]_4$ is a conic combination of evaluations at points in $\sD(S_\rho\cup S_{G_\rho})=\calV_\rho$. In other words, there exist $(a_\ell,b_\ell)\in \calV_\rho$ and scalars $\mu_\ell>0$ such that 
$L(p)=\sum_{\ell=1}^K\mu_\ell p(a_\ell,b_\ell)$ for any $p\in \C[\bx,\by,\bbx,\bby]_4$.
In particular, we have $L(1)=\sum_{\ell=1}^K\mu_\ell$ and $\rho=L(\bx\bx^*\otimes \by\by^*)=\sum_{\ell=1}^K \mu_\ell \ a_\ell a_{\ell}^*\otimes b_\ell b_\ell^*$. This implies that ${1\over L(1)}\rho$ belongs to $\conv\{xx^*\otimes yy^*: (x,y)\in \calV_\rho\}$ and thus $\tsep(\rho)\le L(1)$, showing $\tsep(\rho)\le \xidsep{\infty}$.
\end{proof}

As observed earlier already, since $\SEP_d$ is a $d^4$-dimensional cone, by  Carath\'eodory theorem we have $\seprank(\rho)\le d^4$ for any $\rho\in \SEP_d$ (or, even stronger, $ \seprank(\rho)\le \rank(\rho)^2$). Based on this one can also use the bounds $\xidsep{t}$ to test (non-)membership in $\SEP_d$. The bound $\rank(\rho)^2$ in \cref{lemmembership} below can of course be replaced by any other valid upper bound on the separable rank. Such a valid bound can be obtained, e.g.,  using the {\em birank} of $\rho$, defined as the pair $(\rank(\rho),\rank(\rho^{T_B}))$. 
Indeed, as 
 $\seprank(\rho)=\seprank(\rho^{T_B})$, we have
\begin{equation}\label{eqbirank}\max\{\rank(\rho),\rank(\rho^{T_B})\}\le \seprank(\rho)\le (\min \{\rank(\rho),\rank(\rho^{T_B})\})^2.
\end{equation}

\begin{lemma}\label{lemmembership}
Let $\rho\in \calH^d\otimes \calH^d$. Then, $\rho \in \SEP_d$ if and only if $\xidsep{t}\le \rank(\rho)^2$ for all integers $t\ge 2$.
\end{lemma}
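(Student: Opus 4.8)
The plan is to prove the two implications separately, relying on the chain \eqref{eqchain}, on the identity $\xidsep{\infty}=\tausep(\rho)$ from \cref{lemallequal} (which holds for every $\rho\in\calH^d\otimes\calH^d$, not only separable ones), and on the fact from \cref{tauleqsep} that $\tausep(\rho)=\infty$ whenever $\rho$ is entangled.

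For the forward implication I would assume $\rho\in\SEP_d$ and invoke Carath\'eodory's theorem, which gives $\seprank(\rho)\le\rank(\rho)^2$. Combined with the chain \eqref{eqchain}, i.e.\ $\xidsep{t}\le\xidsep{\infty}\le\seprank(\rho)$, this yields $\xidsep{t}\le\rank(\rho)^2$ for every integer $t\ge 2$. This direction is immediate.

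The converse is the substantial one. Assuming $\xidsep{t}\le\rank(\rho)^2$ for all $t\ge 2$, the goal is to deduce $\rho\in\SEP_d$, and the key step is to upgrade this uniform bound on the finite levels into finiteness of $\xidsep{\infty}$. Since each value is finite, every program \eqref{eqxit} is feasible, so for each $t\ge 2$ I may pick a feasible functional $L_t$ with $L_t(1)\le\rank(\rho)^2+1/t$. The quadratic module $\calM(S_\rho)$ is Archimedean (with $R=2d\sqrt{\rho_{\max}}$, as in the proof of \cref{lemconv}) and $\sup_t L_t(1)<\infty$, so \cref{pointconv} provides a subsequence converging pointwise to some $L\in\C[\bx,\by,\bbx,\bby]^*$. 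Each defining constraint of $\xidsep{\infty}$ --- that $L$ be Hermitian, that $L(\bx\bx^*\otimes\by\by^*)=\rho$, that $L$ be nonnegative on $\calM(S_\rho)$, and that $M(G_\rho\otimes L)\succeq 0$ --- involves only finitely many moments of $L$ at a time and is therefore preserved under pointwise limits; hence $L$ is feasible for $\xidsep{\infty}$ with $L(1)\le\rank(\rho)^2$, giving $\xidsep{\infty}\le\rank(\rho)^2<\infty$.

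To conclude I would apply \cref{lemallequal} to obtain $\tausep(\rho)=\xidsep{\infty}<\infty$, and then use the contrapositive of \cref{tauleqsep}: finiteness of $\tausep(\rho)$ forces $\rho\in\SEP_d$. The main obstacle to keep in mind is that \cref{lemconv} is stated only for $\rho\in\SEP_d$, so I cannot cite its convergence statement $\lim_t\xidsep{t}=\xidsep{\infty}$ verbatim in the converse; instead I must reproduce its compactness argument with the hypothesis $\xidsep{t}\le\rank(\rho)^2$ serving as the \emph{a priori} bound in place of $\seprank(\rho)$, and verify carefully that pointwise limits preserve the moment-matrix positivity $M(G_\rho\otimes L)\succeq 0$, which reduces to the observation that each finite principal submatrix of $M(G_\rho\otimes L)$ depends on finitely many moments only.
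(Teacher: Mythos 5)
Your proposal is correct and follows essentially the same route as the paper: the forward direction via Carath\'eodory and the chain \eqref{eqchain}, and the converse by rerunning the compactness argument of \cref{lemconv} (via \cref{pointconv}, with the hypothesis $\xidsep{t}\le\rank(\rho)^2$ replacing $\seprank(\rho)$ as the a priori bound) to produce a feasible $L$ for $\xidsep{\infty}$, then concluding via \cref{lemallequal} and the contrapositive of \cref{tauleqsep}. Your explicit observation that \cref{lemconv} is stated only for separable $\rho$, so its argument must be reproduced rather than cited, is precisely what the paper's phrase ``we can use the same argument as in the proof of Lemma \ref{lemconv}'' glosses over, and your verification that pointwise limits preserve the moment-matrix positivity constraints is a correct filling-in of that detail.
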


\begin{proof}
The `only if' part follows from  $\xidsep{t}\le \seprank(\rho)\le \rank(\rho)^2$ when $\rho \in\SEP_d$.
Conversely, assume $\xidsep{t}\le  \rank(\rho)^2$ for all integers $t\ge 2$. 
Then, we can use the same argument as in the proof of Lemma \ref{lemconv} and conclude the existence of $L\in\C[\bx,\by,\bbx,\bby]^*$ feasible for $\xidsep{\infty}$, so that $\xidsep{\infty}\le L(1)<\infty$.
Then, by Lemma \ref{lemallequal}, we have $\tsep(\rho)<\infty$, which shows  $\rho$ is separable.
\end{proof}

\begin{remark}\label{remweakbound}
Note that all the results in this section remain valid if, in the definition (\ref{eqxit}) of the parameter $\xidsep{t}$, we omit the `tensor-type' constraint $M_{t-2}(G_\rho\otimes L)\succeq 0.$
Using this additional constraint permits however to define stronger bounds on the separable rank. The results also remain valid if,   instead of the polynomials  in the set $S_\rho$, we use either of the following sets of polynomials: 
$\{\pm(\|x\|^2-\|y\|^2), \sqrt{\Tr(\rho)}- \|y\|^2\}$ corresponding to (\ref{eqab2}), or $\{\sqrt{\Tr(\rho)}-\|x\|^2, \pm 1(\sqrt{\Tr(\rho)}-\|y\|^2)\}$ corresponding to (\ref{eqbnorm1a}) (or, equivalently, $\{\Tr(\rho)-\|x\|^2, \pm (1-\|y\|^2)\}$ corresponding to (\ref{eqbnorm1})). 
\end{remark}

\subsection{Block-diagonal reduction for the parameter $\xib{sep}{t}(\cdot)$} \label{sec: block diagonalization}

In this section we indicate how to rewrite the program (\ref{eqxit}) defining $\xib{sep}{t}(\rho)$ in a more economical way. Observe that all the terms of each of the localizing polynomials $g\in S_\rho$ and the matrix $G_\rho$ have the same degree in $\bx$ and in $\bbx$, and also the same degree in $\by$ and in $\bby$. This enables us to show (see \cref{lemL0}) that we may restrict the optimization in (\ref{eqxit}) to linear functionals $L$ that satisfy the condition
\begin{equation}\label{eqL0}
L(\bx^\alpha\bbx^{\alpha'}\by^\beta\bby^{\beta'})=0 \ \text{ if } |\alpha|\ne| \alpha' |\text{ or } |\beta| \ne |\beta'|.
\end{equation}
Note that this implies in particular that $L(\bx^\alpha\bbx^{\alpha'}\by^\beta\bby^{\beta'})=0$ if $|\alpha+\alpha'|$ or $|\beta+\beta'|$ is odd.

The computational advantage is  that, if $L$ satisfies (\ref{eqL0}), then the moment matrix $M_t(L)$ and the localizing moment matrices $M_{t-1}(gL)$ and $M_{t-2}(G_\rho\otimes L) $ have a block-diagonal form. 
To see this consider first the matrix $M_t(L)$, which is indexed by the set 
\begin{equation} \label{eq: index set}
I^t:=\{(\alpha,\alpha',\beta,\beta')\in ( \N^d)^4 :  |\alpha+\beta+\alpha'+\beta'|\le t\}
\end{equation}
(where the tuple $(\alpha,\alpha',\beta,\beta')$ corresponds to the monomial $\bx^\alpha \bbx^{\alpha'}\by^\beta\bby^{\beta'}$).
Let us partition $I^t$  into sets depending on two integers $r = |\alpha|-|\alpha'|$ and $s = |\beta|-|\beta'|$. For $r,s \in\{-t,-t+1,\ldots, t\}$ let   
\begin{equation} \label{eq: partition}
I^t_{r,s} := \{(\alpha,\alpha',\beta,\beta') \in I^t: |\alpha|- |\alpha'| =r, \ |\beta|-|\beta'| = s\},
\end{equation}
then we have 
\[
I^t = \bigcup_{r,s=-t}^t I^t_{r,s}. 
\]
Then, with respect to this partition of its index set, the matrix $M_t(L)$ is block-diagonal and thus $M_t(L)\succeq 0$ if and only if its principal submatrices indexed by the sets $I^t_{r,s}$ are positive semidefinite.  The analogous reasoning  applies to each localizing moment matrix $M_{t-1}(gL)$ for $g\in S_\rho$ (indexed by $I^{t-1}$) and to $M_{t-2}(G_\rho\otimes L)$ (indexed by $I^{t-2}$).

\begin{lemma}\label{lemL0}
In the definition of the parameter  $\xidsep{t}$ we may restrict the optimization to linear functionals satisfying the additional condition (\ref{eqL0}).
\end{lemma}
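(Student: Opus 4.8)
The plan is to prove the lemma by a \emph{symmetrization} argument, averaging an arbitrary feasible solution over a compact torus group under which the whole program \eqref{eqxit} is invariant. Concretely, for angles $\theta,\phi\in[0,2\pi)$ consider the action $\sigma_{\theta,\phi}$ on $\polyS$ induced by the substitution $x_j\mapsto e^{\i\theta}x_j$, $\overline{x_j}\mapsto e^{-\i\theta}\overline{x_j}$, $y_j\mapsto e^{\i\phi}y_j$, $\overline{y_j}\mapsto e^{-\i\phi}\overline{y_j}$. On a monomial it acts by
$$\sigma_{\theta,\phi}(\bx^\alpha\bbx^{\alpha'}\by^\beta\bby^{\beta'}) = e^{\i\theta(|\alpha|-|\alpha'|)}e^{\i\phi(|\beta|-|\beta'|)}\,\bx^\alpha\bbx^{\alpha'}\by^\beta\bby^{\beta'}.$$
This map preserves degrees and commutes with conjugation, i.e.\ $\sigma_{\theta,\phi}(\overline p)=\overline{\sigma_{\theta,\phi}(p)}$, which is checked directly on monomials.

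Given a feasible $L$ for $\xidsep{t}$, I would define the averaged functional
$$\tilde L(p):=\frac{1}{(2\pi)^2}\int_0^{2\pi}\!\!\int_0^{2\pi} L\big(\sigma_{\theta,\phi}(p)\big)\,d\theta\,d\phi,\qquad p\in\polyS_{2t}.$$
Using $\tfrac{1}{2\pi}\int_0^{2\pi}e^{\i k\theta}\,d\theta=1$ for $k=0$ and $0$ otherwise, one obtains $\tilde L(\bx^\alpha\bbx^{\alpha'}\by^\beta\bby^{\beta'})=L(\bx^\alpha\bbx^{\alpha'}\by^\beta\bby^{\beta'})$ when $|\alpha|=|\alpha'|$ and $|\beta|=|\beta'|$, and $\tilde L(\bx^\alpha\bbx^{\alpha'}\by^\beta\bby^{\beta'})=0$ otherwise; that is, $\tilde L$ satisfies condition \eqref{eqL0} by construction. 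It then suffices to show $\tilde L$ is again feasible with the same objective value $\tilde L(1)=L(1)$: since restricting a feasible set can only raise the infimum, and the averaging exhibits, for each feasible $L$, a feasible solution obeying \eqref{eqL0} with the same value, the parameter $\xidsep{t}$ is unchanged.

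The feasibility check proceeds constraint by constraint, and the crux is that all the data defining \eqref{eqxit} are $\sigma_{\theta,\phi}$-invariant. The objective $L(1)$ and the moment condition $L(\bx\bx^*\otimes\by\by^*)=\rho$ are preserved because the relevant monomials ($1$, and the entries $x_i\overline{x_k}y_j\overline{y_l}$) are degree-balanced and hence fixed by $\sigma_{\theta,\phi}$; Hermitianity of $\tilde L$ follows since $\sigma_{\theta,\phi}$ commutes with conjugation. Each polynomial in $S_\rho$ from \eqref{eqSrho} (involving only $x_i\overline{x_i}$ and $y_i\overline{y_i}$) and each entry of $\Gr=\rho-\bx\bx^*\otimes\by\by^*$ from \eqref{eqGrho} (involving only constants and $x_i\overline{x_k}y_j\overline{y_l}$) is pointwise fixed by $\sigma_{\theta,\phi}$. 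Hence, for $g\in S_\rho\cup\{1\}$ and $p\in\polyS_t$, writing $q:=\sigma_{\theta,\phi}(p)$ we have $\sigma_{\theta,\phi}(g\,p\overline p)=g\,q\overline q\in\calM(S_\rho)_{2t}$, so $\tilde L(g p\overline p)$ is an average of the nonnegative numbers $L(g q\overline q)$ and stays nonnegative. For the tensor-type constraint I would invoke the sesquilinear characterization of $M_{t-2}(\Gr\otimes L)\succeq 0$ from \cref{lemMGLpos} (namely \eqref{eq: M(ML) psd}): invariance of the entries of $\Gr$ yields $\sigma_{\theta,\phi}\big(\sum_{i,j}(\Gr)_{ij}p_i\overline{p_j}\big)=\sum_{i,j}(\Gr)_{ij}q_i\overline{q_j}$ with $q_i=\sigma_{\theta,\phi}(p_i)$ of the same degree, so $M_{t-2}(\Gr\otimes\tilde L)\succeq 0$ follows by averaging the corresponding nonnegative expressions for $L$.

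The main obstacle—essentially the only non-bookkeeping point—is this last verification that the two positivity constraints survive averaging. It hinges on two facts that must be checked carefully: that $\sigma_{\theta,\phi}$ is degree-preserving and conjugation-compatible (so that $p\overline p$ and the quadratic forms $\sum_{i,j}(\Gr)_{ij}p_i\overline{p_j}$ are sent to expressions of the same type and degree, keeping us inside $\calM(S_\rho)_{2t}$ and the relevant truncation), and that $S_\rho$ and $\Gr$ are genuinely pointwise fixed by the action rather than merely invariant as sets. Both reduce to the observation, already implicit in the statement, that every term of each localizing polynomial and of $\Gr$ has matching degrees in $\bx,\bbx$ and in $\by,\bby$.
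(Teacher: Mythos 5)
Your proof is correct, and it is complete: the torus action $\sigma_{\theta,\phi}$ is an algebra homomorphism commuting with conjugation, the Fourier orthogonality computation shows the average satisfies \eqref{eqL0} with $\tilde L(1)=L(1)$ and $\tilde L(\bx\bx^*\otimes\by\by^*)=\rho$, and since every generator $g\in S_\rho\cup\{1\}$ and every entry of $G_\rho$ consists of degree-balanced terms, positivity on $\calM(S_\rho)_{2t}$ and the condition $M_{t-2}(G_\rho\otimes\tilde L)\succeq 0$ (via the sesquilinear characterization \eqref{eq: M(ML) psd} of \cref{lemMGLpos}) survive averaging. However, your route differs from the paper's own proof of \cref{lemL0}. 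The paper defines $\tilde L$ directly, by setting it equal to $L$ on balanced monomials and to zero otherwise (which is exactly the functional your integral produces), and then verifies feasibility not by invariance but by exhibiting block structure: with respect to the partition $I^t=\bigcup_{r,s}I^t_{r,s}$ from \eqref{eq: partition}, each of $M_t(\tilde L)$, $M_{t-1}(g\tilde L)$ for $g\in S_\rho$, and $M_{t-2}(G_\rho\otimes\tilde L)$ is block-diagonal, with each diagonal block coinciding with a principal submatrix of the corresponding (positive semidefinite) matrix for $L$. The paper does sketch your symmetrization idea in the paragraph immediately after its proof (``An alternative way to arrive at \eqref{eqL0} by exploiting sign symmetries''), but only at the level of invoking the Haar measure of $\mathbb T\times\mathbb T$, without the constraint-by-constraint verification you carry out. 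The trade-off: your averaging argument is more conceptual and generalizes immediately to any compact group leaving the program data invariant, whereas the paper's hands-on proof makes the block-diagonal decomposition explicit, which is precisely the structure exploited computationally right afterwards (the size reductions reported in Table~\ref{tableblockdiag} and the reformulations in Section~\ref{sec:momentDPS}).
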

\begin{proof}
Assume $L$ is feasible for $\xidsep{t}$; we construct another feasible solution $\tilde L$ with the same objective value: $\tilde L(1)=L(1)$, and satisfying (\ref{eqL0}).
For this define $\tilde L(\bx^\alpha \bbx^{\alpha'} \by ^\beta\bby^{\beta'})=L(\bx^\alpha \bbx^{\alpha'} \by ^\beta\bby^{\beta'})$ if $|\alpha|=|\alpha'|$ and $|\beta|=|\beta'|$,  and 
$\tilde L(\bx^\alpha\bbx^{\alpha'}\by^\beta\bby^{\beta'})=0$ otherwise. Then, $\tilde L(1)=L(1)$ and, by construction,  $\tilde L$ satisfies (\ref{eqL0}). We claim that $\tilde L$ is feasible for program (\ref{eqxit}).  Clearly, we have $\tilde L(\bx\bx^*\otimes \by\by^*)=\rho$.
We now show that $M_t(\tilde L)\succeq 0$, $M_{t-1}(g\tilde L)\succeq 0$ for $g\in S_\rho$, and 
$M_{t-2}(G_\rho\otimes \tilde L)\succeq 0$. 

We first show that $M_t(\tilde L)\succeq 0$. We use the partitioning $I^t = \cup_{r,s=-t}^t I^t_{r,s}$ of the row/column indices. 

As the principal submatrix of $M_t(\tilde L)$ indexed by $I^t_{r,s}$ only involves evaluations of $L$ at monomials of the form $\bx^\gamma\bbx^{\gamma'}\by^\delta\bby^{\delta'}$ with $|\gamma|=|\gamma'|$ and $|\delta|=|\delta'|$,  it coincides with the principal submatrix of $M_t(L)$ indexed by  $I^t_{r,s}$ and thus it is positive semidefinite.
Hence, by construction, the matrix $M_t(\tilde L)$ is block-diagonal with respect to the partition $I_t=\cup_{r,s=0}^t I_{r,s}$ of its index set, with positive semidefinite diagonal blocks, which implies  $M_t(\tilde L)\succeq 0$.

Consider now a localizing polynomial $g \in S_\rho$. Note that all its terms have the same degree in $\bx$ and $\bbx$ and also the same degree in $\by$ and $\bby$ (equal to 0 or 1). We consider the  partition of the index set of $M_{t-1}(gL)$ as $I^{t-1}=\cup_{r,s=-t+1}^{t-1} I^{t-1}_{r,s}.$ Again, the principal submatrix of $M_{t-1}(gL)$ indexed by $I^{t-1}_{r,s}$ involves only values $\tilde L(\bx^\gamma \bbx^{\gamma'}\by^\delta \bby^{\delta'})$ with $|\gamma|=|\gamma'|$ and $|\delta|=|\delta'|$ and thus it coincides with the principal submatrix of $M_{t-1}(gL)$ indexed by $I^{t-1}_{r,s}$.
Hence, the matrix $M_{t-1}(g\tilde L)$ is block-diagonal with respect to the partition $I_{t-1}=\cup I^{t-1}_{r,s}$ of its index set, with positive semidefinite diagonal blocks, which implies  $M_{t-1}(g\tilde L)\succeq 0$.

The analogous reasoning applies to showing that $M_{t-2}(G_\rho\otimes \tilde L)\succeq 0$. For this we consider the partition of its index set $[d]^2\times I_{t-2}$ into $\cup_{r,s=-t+2}^{t-2} ([d]^2\times I^{t-2}_{r,s})$ and observe that $M_{t-2}(G_\rho\otimes \tilde L)$ is block-diagonal with respect to this partition, with positive semidefinite diagonal blocks.
\end{proof}

\paragraph{An alternative way to arrive at \cref{eqL0} by exploiting sign symmetries.}
Let $\mathbb T$ be the \emph{circle group}, the multiplicative group of all complex numbers of modulus $1$:
\[
\mathbb T = \{z \in \C \colon \abs{z} = 1\}.
\]
The set $\SEP_d$ is naturally invariant under the action of $(w_x, w_y) \in \mathbb T \times \mathbb T$ on vectors $(x,y) \in \C^d \times \C^d$ given by $(w_x,w_y) (x,y) = (w_x x, w_y y)$ (and its extension to states). Indeed, we have 
\[
(w_x,w_y) \cdot (xx^* \otimes yy^*) = (w_x x)(w_x x)^* \otimes (w_y y) (w_y y)^* = xx^* \otimes yy^*.
\]
Likewise, the localizing constraints are invariant under this group action, and this group action extends to the linear functionals $L$ used as variables in the definition of
$\xidsep{t}$. Since $\mathbb T \times \mathbb T$ admits a Haar measure, in the derivation of $\xidsep{t}$ we may therefore restrict to linear functionals that are invariant under this group action.
That is, we may assume that 
\[
L(\bx^\alpha \bbx^{\alpha'} \by^\beta \bby^{\beta'}) = w_x^{\abs{\alpha} - \abs{\alpha'}} w_y^{\abs{\beta} - \abs{\beta'}} L(\bx^\alpha \bbx^{\alpha'} \by^\beta \bby^{\beta'})\quad \text{ for all } (w_x,w_y)\in \mathbb T\times\mathbb T.
\]
This implies that 
\begin{equation*}
L(\bx^\alpha\bbx^{\alpha'}\by^\beta\bby^{\beta'})=0 \ \text{ if } |\alpha|\ne| \alpha' |\text{ or } |\beta| \ne |\beta'|.
\end{equation*}
Indeed, suppose for example that $\abs{\alpha} - \abs{\alpha'} =: r \neq 0$. Then using the above with $w_x = e^{\i \pi /r}$ shows that $L(\bx^\alpha \bbx^{\alpha'} \by^\beta \bby^{\beta'}) = - L(\bx^\alpha \bbx^{\alpha'} \by^\beta \bby^{\beta'})$ and hence $L(\bx^\alpha \bbx^{\alpha'} \by^\beta \bby^{\beta'})=0$. 

Note that Dressler, Nie and Yang  \cite{dressler2020separability} used this same group action to argue that, alternatively,  one may restrict to  $(x,y)\in \C^d\times \C^d$ having leading coordinates that are real nonnegative:  $x_1,y_1\ge 0$. While this permits to eliminate variables (and work with $2(2d-1)$ instead of $4d$ real variables), this reduction does not permit to block-diagonalize the moment matrices as indicated above. We also refer to \cite{GP} and the recent paper \cite{TSOS} for more details about exploiting sign symmetries.

\paragraph{Block-diagonal reduction example.}
To illustrate the effect of the block-diagonalization we consider an example with $\rho\in \mathcal H^3\otimes \mathcal H^3\simeq \mathcal H^9$ (i.e., $d_1=d_2=3$) 
and relaxation order $t=3$.  In Table \ref{tableblockdiag} we indicate the respective sizes of the matrices involved in the program for $\xisep{3}$ with  and without block-diagonalization (in column `block' and `non-block', respectively). There, `\# entries' stands for $\sum_i m_i^2$, where $m_i$ are the sizes of the matrices involved in the program, and `\# variables' indicates the total number of variables  in each case.
The last line indicates the typical run time for such an instance, we collect the computational details later in \cref{secnumerics}.  Note that the full program cannot be solved and thus  block-diagonalization is crucial to enable computation. For the next case $(d_1,d_2)=(2,6)$ or $(4,4)$ one can compute the bound of order $t=2$ but not the bound of order $t=3$ even after block-diagonalization.

\begin{table}[!htbp]
	\centering
	\caption{Matrix sizes block- vs. non-block-diagonalized.}\label{tableblockdiag}
	\begin{tabular}{| c | c | c |} 
		\toprule
		Matrix & block  & non-block \\
		\hline \hline 
		$M_3(L)$  & $25 \times (12 \times 12 \text{ to } 96 \times 96)$ & $455 \times 455$\\
		$M_2(gL)$  & $78 \times (6 \times 6 \text{ to } 38 \times 38)$ & $6 \times (91 \times 91)$ \\
		$M_1(G_{\rho} \otimes L)$ &  $5 \times (36 \times 36 \text{ to } 108\times 108)$  & $234\times 234$ \\
		\hline 
		\# entries & $110480$ & $286624$  \\
		\# variables & 6952 & 18564 \\
		\hline
	run time & {4.6 min}  & memory error \\
			\bottomrule
	\end{tabular}
\end{table}

\begin{remark}
{As observed above, using the block-diagonalized version of the program for $\xisep{3}$ is crucial to be able to compute the bounds for some larger matrix sizes. We note however that the optimal solution to this program will not satisfy the flatness condition $\rank\, M_t(L)=\rank\, M_{t-1}(L)$ 
(with $t=2,3$). Indeed one can check that this flatness condition  can hold only in the trivial case $\rho=0$. 
Intuitively this can be (roughly) explained by noting that, due to its symmetric structure,~$L$  tends to lie within the interior of the feasible region.   Hence our approach, which produces lower bounds on $\seprank(\rho)$, can be viewed as being complementary to the approach in, e.g., \cite{dressler2020separability,Li_2020,NieZhang2016}, which uses flatness to produce separable decompositions of $\rho$ and thus upper bounds on $\seprank(\rho)$.} \end{remark}

\section{Extensions and connections to other matrix factorization ranks}\label{sec:extension}

Here   we explain some simple extensions of the approach given in the previous section to related notions of factorization ranks.

Without going into details let us mention that the approach 
generalizes in a straightforward way to the separable rank of \emph{multipartite} separable quantum states. In that case we have  an $n$-partite quantum state $\rho$ acting on $(\C^{d})^{\otimes n}$, and separability means that $\rho$   belongs to the set 
\[\mathrm{cone}\{x_1 x_1^* \otimes x_2 x_2^* \otimes \ldots \otimes x_n x_n^*: x_1, \ldots, x_n \in \C^d, \|x_i\| = 1 \ (i \in [n])\}.\]
In addition, one can use a different local dimension $d_i$ for each part (i.e., $x_i \in \C^{d_i}$). 

The approach also extends to an alternative (but equivalent) definition of separability, which  uses mixed states instead of pure states, i.e., where one requires $\rho$ to be of the form $\rho = \sum_{\ell=1}^r A_\ell \otimes B_\ell$ with $A_\ell, B_\ell \in \mathcal H_+^d$. Analogously, the smallest such integer $r$ is called the \emph{mixed separable rank} of $\rho$. 
This notion  has been considered, e.g.,  in~\cite{CDN19,de_las_Cuevas_2020,dressler2020separability} and mixed separable decompositions are called $S$-decompositions in \cite{NieZhang2016} (which deals with real states).
To define bounds on the mixed separable rank one can follow the same approach as in \cref{sec: LB} but one here has to introduce more variables. Indeed, we now need variables $\bx=(x_{ij})_{1\le i\le j\le d}$ and $\by=(y_{ij})_{1\le i\le j\le d}$  to model the entries of the matrices $A_\ell\in \H^d_+$ and  $B_\ell\in\H^d_+$ (while we previously only needed variables $(x_i)_{i\in [d]}$ and $(y_i)_{i\in [d]}$  to model the vectors $a_\ell\in\C^d$ and $b_\ell\in \C^d$) and one should assume that the corresponding  Hermitian matrices $X=(x_{ij})$ and $Y=(y_{ij})$ are  positive semidefinite. One may again scale the variables so that they satisfy a boundedness condition $|x_{ij}|,|y_{ij}|\le \sqrt {\rho_{\max}}$. This enables to design hierarchies of lower bounds that converge to the mixed separable analog of the parameter $\tau_{\text{sep}}(\rho)$. The details are analogous and thus omitted.

In what follows we mention two other possible extensions, for the real separable rank and for the completely positive rank, where we give some more details as well as some numerical results.

\subsection{Specialization to bipartite real states} \label{sec: real separable rank}
The treatment in Section \ref{sec: LB} for the separable rank can be adapted in an obvious manner to the case of the {\em real} separable rank. Here we are given a real symmetric bipartite state $\rho\in \calS^d\otimes \calS^d$, where $\calS^d$ is the set of real symmetric $d\times d$ matrices. 
Then $\rho$ is called {\em real separable} if it admits a decomposition (\ref{eqseprank}) with all vectors $a_\ell,b_\ell\in \R^d$ real valued,
and the  smallest $r$ for which such a decomposition exists is the {\em real separable rank}, denoted 
$\seprankR(\rho)$.
{Note that it can be that a real state is separable but not real separable; this is the case for the state Sep3 discussed in Section \ref{secnumerics}.}
One can define in an analogous manner the corresponding parameter $\tau_{\mathrm sep}^\R(\rho)$ and the hierarchy of bounds $\xidsepR{t}$ that converge asymptotically to $\tau_{\mathrm sep}^\R(\rho)$. The difference in the formulation of these parameters is that we now replace the complex conjugate by the real transpose operation and work with linear functionals $L$ acting on the real polynomial space $\R[\bx,\by]_{2t}$. So the parameter $\xidsepR{t}$ reads
	\begin{equation} \label{eqxitR}
	\begin{split}
	\xidsepR{t} := \inf \Big\{ L(1) ~|~ &L: \R[\bx,\by]_{2t}\to\R \ \text{  s.t. }  \\
	& L(\bx\bx^T\otimes \by\by^T) = \rho, \\ 
	& L \geq 0 \text{ on }  \calM(S_\rho)_{2t}, \\ 
	& M_{t-2}(G_\rho\otimes L)=L(\Gr \otimes [\bx,\by]_{t-2}[\bx,\by]_{t-2}^T) \succeq 0\Big\}.
	\end{split}
	\end{equation}
Again we may impose an additional block-diagonal structure on the positive semidefinite matrices entering this program. Indeed, since the polynomials involved in the constraints leading to the above program have the property that all their terms have an even degree in $\bx$ and an even degree in $\by$, we may assume that the variable $L$ satisfies the condition
\begin{equation}\label{eqL0R}
L(\bx^\alpha\by^\beta)=0 \ \text{ if } |\alpha| \text{ or } |\beta| \text{ is odd.}
\end{equation}
Note that this is the real analog of condition (\ref{eqL0}) in the complex case. The additional constraint (\ref{eqL0R}) permits to replace each of the positive semidefinite  constraints for the matrices $M_t(L)$, $M_{t-1}(gL)$ for $g\in S_\rho$,  and $M_{t-2}(G_\rho\otimes L)$ by four smaller positive semidefinite constraints, each of size roughly 1/4 of the original size. For this let $I^t$ denote the index set of the matrix $M_t(L)$ which we partition into $I^t=\cup_{a,b\in \{0,1\}}I^t_{a,b}$, where $I^t_{a,b}$ consists of the pairs $(\alpha,\beta)\in I^t$ with given parity $|\alpha|\equiv a$, $|\beta|\equiv b$ modulo 2. Then, with respect to this partition of its index set, the matrix $M_t(L)$ is block-diagonal and thus $M_t(L)\succeq 0$ if and only if  $M_t(L)[I^t_{a,b}]\succeq 0$ for $a,b\in \{0,1\}$. The same block-diagonalization applies to the matrices $M_{t-1}(gL)$ for $g\in S_\rho$. For the matrix $M_{t-2}(G_\rho\otimes L)$ we consider the block-diagonalization obtained by partitioning its index set as
$\cup_{a,b\in \{0,1\}} ([d^2] \times I^{t-2}_{a,b})$.

Some numerical results on the behaviour of the bounds will be given in the next section.

\subsection{Numerical results for bipartite complex and real states}\label{secnumerics}

Here we collect some numerical results that illustrate the behaviour of the bounds $\xidsep{t}$ and $\xidsepR{t}$ for different choices of localizing constraints, see \cref{table2,table3,table4} for examples at order $t =2,3,4$ respectively. Computations were made in Windows using Julia \cite{bezanson2017julia}, JuMP \cite{DunningHuchetteLubin2017} and MOSEK \cite{Mosek}  with hardware specifications: i7-8750 CPU with 32 Gb Memory.\footnote{The code is available at: \url{https://github.com/JAndriesJ/sep-rank}}
For our examples we will use the separable states Sep1, Sep2, and Sep3, and the entangled state Ent1 that we describe now. For numerical stability we do the computations with a scaling of these states so that they have trace equal to 1. We present the examples in matrix form with lines drawn to indicate the block structure $\rho=\big(\big(\rho_{ij,i'j'} \big)_{j,j'\in [d_2]}\big)_{i,i' \in [d_1]}$. Zero-valued entries are left blank.
\begin{equation*}
	\mathrm{Sep1} := 
	\left[\begin{array}{cc|cc}
		1   &  &  &   \\
		&  &  &   \\
		\hline
		&  &  & \  \\
		&  &  &    1  \\
	\end{array}\right]
	~;~
	\mathrm{Sep2} := 
	\left[\begin{array}{cc|cc}
		2   &   1 & 1 & 1  \\
		1   &   1 & 1 & 1  \\
		\hline
		1   &   1 & 1 & 1  \\
		1   &   1 & 1 & 2  \\
	\end{array}\right]
\end{equation*}	

\begin{equation*}
	\mathrm{Sep3} := 
	\left[\begin{array}{ccc|ccc}  
		4     &    &    &    &    &     \\
		& 4  & 2     &    &    & 2     \\
		& 2     & 2     &1      & -1    &     \\
		\hline
		&    & 1     & 2     & 1     & -1     \\
		&    & -1    & 1     & 5     & 1      \\
		& 2     &    & -1    & 1 & 2          \\
	\end{array}\right]
	~;~
	\mathrm{Ent1} = 
	\left[\begin{array}{ccc|ccc|ccc}
		1& & & &1& & & &1\\
		&2& &1& & & & & \\
		& &\frac{1}{2}& & & &1& & \\
		\hline
		&1& &\frac{1}{2}& & & & & \\
		1& & & &1& & & &1\\
		& & & & &2& &1& \\
		\hline
		& &1& & & &2& & \\
		& & & & &1& &\frac{1}{2}& \\
		1& & & &1& & & &1
	\end{array}\right].
\end{equation*}	
The separable states Sep1, Sep2, and Sep3 were previously studied for example in \cite{Chen_2012}, where it was moreover shown that for a separable state $\rho$ with local dimensions $(d_1,d_2)=(2,3)$ and birank $(r,s)$ one has $\seprank(\rho)=\max\{r,s\}$.  The entangled state Ent1 was constructed by Choi in~\cite{Choi82} as the first example in dimension $(d_1,d_2) = (3,3)$ of an entangled state $\rho$ that satisfies the PPT condition. 

In \cref{sec:tausep} we provided three different choices of localizing constraints in (\ref{eqabmax}), (\ref{eqab2}) and (\ref{eqbnorm1}),  that we denote here as S1, S2 and S3, respectively.  The examples show that the different choices lead to incomparable bounds. Indeed, let us the notation S1~$<$~S2 as short hand for ``there exists a $\rho$ such that $\xidsep{t}$ (using scaling $S1$) $< \xidsep{t}$ (using scaling $S2$)''. Then at level $t=2$ the state Sep1 demonstrates  both S3~$<$~S1 and S2~$<$~S1, and at level $t=3$ Sep2 demonstrates both S2~$<$~S3 and S1~$<$~S3 and Sep3 demonstrates both S1~$<$~S2 and S3~$<$~S2. A case where the various constraints differ in ability to detect entanglement is provided by the state Ent1 at order $ t=2$.	

As mentioned in \cref{sec: real separable rank}, there exist real states $\rho \in \calS^d\otimes \calS^d$ that are separable but do not admit a decomposition using real vectors $a_\ell, b_\ell \in \R^d$. Our bound $\xidsepR{2}$ provides a proof of the latter for the state Sep3: its real separable rank is infinity since our lower bound is infeasible (i.e., there exists a dual certificate that proves $\seprankR(\text{Sep3}) = \infty$). 

Finally we note that one sometimes needs to go beyond level $t=2$ (and thus beyond the PPT criterion) to reveal entanglement: with the localizing constraints S3 the bound for Ent1 is feasible at $t=2$, but infeasible at $t=3$.

\medskip
In addition, we show in \cref{scat} a scatter plot of the bound $\xi_{3}^{\mathrm{sep}}(\rho)$  vs.~its computation time in seconds for 100 random complex matrices $\rho$ grouped and colored by the respective scalings S1, S2 and S3.
These  matrices  are defined by
$\rho = \sum_{j=1}^{5} a^{(j)} {({a}^{(j)})}^* \otimes b^{(j)} {({b}^{(j)})}^*, $
where $a^{(j)},b^{(j)} \in \C^3$ are random vectors whose entries are of the form $ x + \i y$ with  $x,y \in \mathcal{N}(0,1)$. (We also normalize the trace here for numerical stability.) This construction guarantees separability and provides the upper bound $\seprank(\rho)\le 5$. Such states also satisfy the reverse inequality $\seprank(\rho) \geq 5$ almost surely (since $\rank(\rho)=5$ almost surely). We use this class of examples merely to test the quality of the bounds.
From the figure we can draw the following observations. First, the bounds are concentrated around the means 2.7, 3.4 and 3.3 for the scalings S1, S2 and S3, respectively.
Second, in this class of examples the S1 rescaling yields inferior bounds as compared to S2 and S3.
Third, out of the hundred examples and for the three different scalings considered, no bound exceeded the value 4.

\begin{table}[!htbp]
	\centering
	\caption{Examples and numerical bounds level $t=2$}\label{table2}
	\begin{tabular}{|c c c | c c c | c c c |c|c|}
		\toprule
		$\rho$ & $(d_1,d_2)$ & $\birank(\rho)$   &  \multicolumn{3}{c}{$\xidsep{2}$}  &  \multicolumn{3}{c}{$\xidsepR{2}$} & $\seprank(\rho)$  & time \\
		\hline\hline
		{}&{}&{}&S1&S2&S3&S1&S2&S3&&\\	
		\hline
		Sep1\cite{Chen_2012}&(2, 2)&(2, 2)&\textbf{2.0}&1.0&1.0&\textbf{2.0}&1.0&1.0&2&$<1$\\
		Sep2\cite{Chen_2012} &(2, 2)&(3, 3)&1.421&1.0&1.0&1.421&1.0&1.0&3&$<1$ \\
		Sep3\cite{Chen_2012} &(2, 3)&(4, 6)&1.333&1.0&1.0&*&*&*&6&$<1$  \\
		Ent1\cite{Choi82}&(3, 3)&(4, 4)&2.069&\textbf{*}&1.525&2.069&\textbf{*}&1.525&$\infty$&$<1$ \\
		\bottomrule
	\end{tabular}
	
	\caption{Examples and numerical bounds level $t=3$}\label{table3}
	\begin{tabular}{|c c c | c c c | c c c |c|c|}
		\toprule
		$\rho$ & $(d_1,d_2)$ & $\birank(\rho)$   &  \multicolumn{3}{c}{$\xidsep{3}$}  &  \multicolumn{3}{c}{$\xidsepR{3}$} & $\seprank(\rho)$ & time \\
		\hline \hline
		 {}&{}&{}&S1&S2&S3&S1&S2&S3&&\\	
		\hline			
		Sep1&(2, 2)&(2, 2)&\textbf{2.0}&\textbf{2.0}&\textbf{2.0}&\textbf{2.0}&\textbf{2.0}&\textbf{2.0}&2&$<1$	\\
		Sep2&(2, 2)&(3, 3)&1.909&2.0&\textbf{2.178}&1.909&2.0&2.178&3&$2$	\\
		Sep3&(2, 3)&(4, 6)&2.423&3.0&2.790&*&*&*&6&25	\\
		Ent1&(3, 3)&(4, 4)&-&-&\textbf{*}&-&\textbf{*}&\textbf{*}&$\infty$&267 \\
		\bottomrule
	\end{tabular}
		
		\caption{Examples and numerical bounds level $t=4$}\label{table4}
		\begin{tabular}{|c c c | c c c | c c c |c|c|}
			\toprule
			$\rho$ & $(d_1,d_2)$ & $\birank(\rho)$   &  \multicolumn{3}{c}{$\xidsep{4}$}  &  \multicolumn{3}{c}{$\xidsepR{4}$} & $\seprank(\rho)$ & time \\
			\hline \hline
			{}&{}&{}&S1&S2&S3&S1&S2&S3&&\\	
			\hline			
			Sep1&(2, 2)&(2, 2)&\textbf{2.0}&\textbf{2.0}&\textbf{2.0}&\textbf{2.0}&\textbf{2.0}&\textbf{2.0}&2&105	\\
			Sep2&(2, 2)&(3, 3)&\textbf{3.0}&\textbf{3.0}&\textbf{3.0}&\textbf{3.0}&\textbf{3.0}&\textbf{3.0}&3& 332 \\
			\bottomrule
			\multicolumn{11}{@{}p{5in}}{\footnotesize Run time given in seconds}\\
			\multicolumn{11}{@{}p{5in}}{\footnotesize * : Infeasibility certificate returned}\\
			\multicolumn{11}{@{}p{5in}}{\footnotesize - : Solver could not reach a conclusion (not a memory error)} \\
	\end{tabular}

\end{table}	

\begin{figure}[ht]
	\centering
	\begin{tikzpicture} [scale=1]
		\begin{axis}[enlargelimits=false,axis on top,ylabel =time (s), xlabel = $\xi_{3}^{\mathrm{sep}}(\rho)$, 
			xtick={2.00, 2.25, 2.50, 2.75, 3.00, 3.25, 3.50, 3.75, 4.00},ytick={100, 200, 300, 400, 500, 600, 700},
			xticklabel style={tick align = outside},
			yticklabel style={tick align = outside}
			]
			\addplot graphics
			[xmin=2.00,xmax=4.10,ymin=80,ymax=740]
			{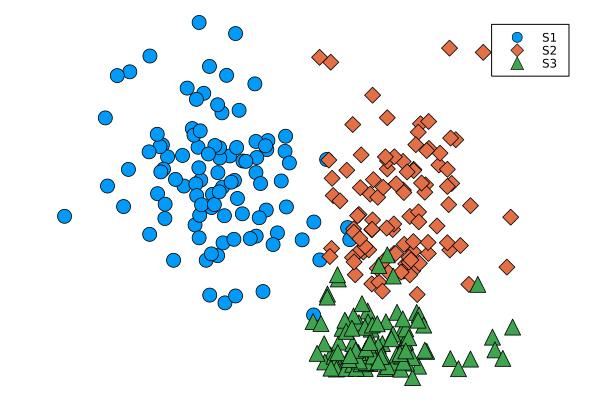};
		\end{axis}
	\end{tikzpicture}
	\caption{Scatter plot of $\xi_{3}^{\mathrm{sep}}(\rho)$  vs computation time (seconds) for 100 random matrices, grouped and colored by rescalings S1, S2 and S3.}
	\label{scat}
\end{figure}

\subsection{Stronger bounds for  the completely positive rank} \label{sec:CP}

For a given integer $d \in \N$, the cone of completely positive $d \times d$ matrices is defined as 
\[
\CP_d := \mathrm{cone}\{ xx^T :  x \in \R^d_{+} \}. 
\]
The cone of completely positive matrices and its dual, the cone of copositive matrices, are well known for their expressive power. For example, many NP-hard problems can be formulated as linear optimization problems over these cones~\cite{dKP02,Bur09}. We refer to~\cite{BSM03} for many structural properties about the cone $\CP_d$. As in the case of separable states, given a completely positive matrix $A$ one can ask what is the smallest integer $r \in \N$ such that $A$ admits a decomposition of the form 
\begin{equation}\label{eqAcp}
A = \sum_{\ell=1}^r a_\ell a_\ell^T
\end{equation}
for entrywise nonnegative vectors $a_\ell \in \R^d_{+}$ ($\ell \in [r]$). The smallest such $r$ is called the \emph{completely positive rank} of $A$ and denoted as $\cprank(A)$. In~\cite{FP16} the authors defined the parameter $\taucp(A)$ as
\begin{equation} \label{eq:taucp}
\taucp(A) := \inf\Big\{\lambda: \lambda>0,  {1\over \lambda} \rho \in \conv \{xx^T: x \in \R^d_{+},\ xx^T \leq A,\ xx^T \preceq A\} \Big \}
\end{equation}
to lower bound the completely positive rank (as well as  an SDP-based bound $\taucp^{\mathrm{sos}}(A)$).  In~\cite{GdLL17a} the authors studied (among others) the completely positive rank from the polynomial optimization perspective and  derived a hierarchy of semidefinite programming bounds, denoted here as  {$\xib{cp}{t,(2019)}(A)$.} There the fact was  used that, if $xx^T \preceq A$, then also $(xx^T)^{\otimes \ell} \preceq A^{\otimes \ell}$ for all $\ell \in \N$ and therefore the following constraints are valid
\begin{equation} \label{eq: tensor}
L((\bx\bx^T)^{\otimes \ell}) \preceq A^{\otimes \ell} \qquad \text{for all } \ell \in \N
\end{equation}
for the linear functional arising from the atomic decomposition (\ref{eqAcp}). 
Based on this the following bounds are defined in~\cite{GdLL17a} and shown to converge to $\taucp(A)$ as $t \to \infty$:
	\begin{equation} \label{eqxitcp}
	\begin{split}
	{\xib{cp}{t,(2019)}(A) }:= \inf \Big\{ L(1): &\ L \in [\bx]_{2t}^*,  \\
	& L(\bx\bx^T) = A, \\ 
	& L \geq 0 \text{ on }  \calM(\{\sqrt{A_{ii}} x_i - x_i^2 : i \in [d]\})_{2t}, \\ 
	& L \geq 0 \text{ on }  \calM(\{A_{ij} - x_i x_j : i,j \in [d], i \neq j\})_{2t}, \\
	& L((\bx\bx^T)^{\otimes \ell}) \preceq A^{\otimes \ell} \text{ for all } \ell \in [t] \Big\}.
	\end{split}
	\end{equation}
The same convergence result holds if we replace the last constraint in (\ref{eqxitcp}) with the constraint
\begin{equation} \label{eq: v CP}
L \geq 0 \qquad \text{ on } \calM(\{v^T (A-\bx\bx^T) v: v \in \R^d\})_{2t}.
\end{equation}
Using the same reasoning as in \cref{sec: poly sep}, we see that we can strengthen the parameter $\xib{cp}{t,(2019)}(A)$ by adding the constraint
\begin{equation} \label{eq: GL cp}
M_{t-1}((A-\bx\bx^T)\otimes L) = L((A-\bx\bx^T) \otimes [\bx]_{t-1} [\bx]_{t-1}^T) \succeq 0.
\end{equation}
{Let $\xib{cp}{t}(A)$ denote the parameter defined in this way, so that $\xib{cp}{t,(2019)}(A)\le \xib{cp}{t}(A)$.} Note that   \cref{lemGLpos,lemMGLtoGL} show that \cref{eq: GL cp} implies \cref{eq: v CP}. 
We now show that \cref{eq: GL cp} in fact implies \cref{eq: tensor}, which means that adding \cref{eq: GL cp} strengthens both approaches provided in \cite{GdLL17a}; we present below numerical examples that illustrate this.
To do so, we introduce the following notation. Let $\langle x \rangle$ denote the vector of noncommutative monomials in the variables $x_1,\ldots, x_d$. Then we can define the \emph{noncommutative localizing matrix}
\begin{equation} \label{eq: Mnc}
\Mnc((A-xx^T)\otimes L) := L( (A-xx^T) \otimes \langle x\rangle \langle x\rangle^T).
\end{equation}
Note that $M((A-xx^*)\otimes L) \succeq 0$ if and only if $\Mnc((A-xx^*)\otimes L) \succeq 0$ (since the latter is obtained by duplicating rows/columns of the former). 

\begin{lemma}
	Consider $A \in \R^{d \times d}$ and $L \in \R[\bx]^*$. If $L(\bx\bx^T) = A$ and $M((A-\bx\bx^T)\otimes L) \succeq 0$, then \cref{eq: tensor} holds, i.e., 
	\[
	L((\bx\bx^T)^{\otimes \ell}) \preceq A^{\otimes \ell} \text{ for all } \ell \in \N.
	\]
\end{lemma}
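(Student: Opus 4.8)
The plan is to prove the inequalities $L((\bx\bx^T)^{\otimes\ell})\preceq A^{\otimes\ell}$ (for $\ell\ge 1$; the case $\ell=0$ is vacuous since it concerns $L(1)$ only) by reading them off, one value of $\ell$ at a time, from the positive semidefiniteness of a well-chosen principal submatrix of the \emph{noncommutative} localizing matrix $\Mnc((A-\bx\bx^T)\otimes L)$ of \cref{eq: Mnc}, and then to close the argument by induction on $\ell$.

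First I would pass to the noncommutative matrix: as observed just before the statement, $M((A-\bx\bx^T)\otimes L)\succeq 0$ holds if and only if $\Mnc((A-\bx\bx^T)\otimes L)\succeq 0$, since the latter is obtained from the former by duplicating rows and columns. Its rows and columns are indexed by pairs $(p,w)$ with $p\in[d]$ and $w$ a word, and its $((p,w),(q,w'))$ entry is $A_{pq}L(x^wx^{w'})-L(x_px_qx^wx^{w'})$, where $x^w$ denotes the (commutative) monomial associated with the word $w$.

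The key step is to identify the right submatrix. Fix $\ell\ge 1$ and let $N_\ell$ be the principal submatrix indexed by all $(p,w)$ with $w$ of length $\ell-1$. Writing the row index as $(i_1,(i_2,\ldots,i_\ell))=:(i_1,\ldots,i_\ell)=I\in[d]^\ell$ and the column index similarly as $J$, a direct computation using $L(\bx\bx^T)=A$ shows that the $(I,J)$ entry of $N_\ell$ equals
\[
A_{i_1j_1}\,\big[L((\bx\bx^T)^{\otimes(\ell-1)})\big]_{(i_2,\ldots,i_\ell),(j_2,\ldots,j_\ell)} - \big[L((\bx\bx^T)^{\otimes\ell})\big]_{I,J},
\]
so that, as matrices,
\[
N_\ell = A\otimes L((\bx\bx^T)^{\otimes(\ell-1)}) - L((\bx\bx^T)^{\otimes\ell}).
\]
Being a principal submatrix of a positive semidefinite matrix, $N_\ell\succeq 0$, which gives the recursion $L((\bx\bx^T)^{\otimes\ell})\preceq A\otimes L((\bx\bx^T)^{\otimes(\ell-1)})$.

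Finally I would induct on $\ell$. For $\ell=1$ the claim reads $L(\bx\bx^T)=A\preceq A$, which holds with equality. For the inductive step I would use the telescoping identity
\[
A^{\otimes\ell} - L((\bx\bx^T)^{\otimes\ell}) = A\otimes\big(A^{\otimes(\ell-1)} - L((\bx\bx^T)^{\otimes(\ell-1)})\big) + N_\ell,
\]
where $N_\ell\succeq 0$ by the previous step and $A^{\otimes(\ell-1)}-L((\bx\bx^T)^{\otimes(\ell-1)})\succeq 0$ by the induction hypothesis. The one point that needs care is that the first summand is positive semidefinite: this follows because the Kronecker product of two positive semidefinite matrices is positive semidefinite, \emph{provided} $A\succeq 0$, which holds here since $A$ is a completely positive matrix and hence positive semidefinite. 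I expect this monotonicity-under-tensoring step, together with the index bookkeeping needed to identify $N_\ell$ with $A\otimes L((\bx\bx^T)^{\otimes(\ell-1)}) - L((\bx\bx^T)^{\otimes\ell})$, to be the main things to get right; the rest is routine.
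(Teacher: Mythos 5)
Your proof is correct and takes essentially the same route as the paper's: both pass to the noncommutative localizing matrix $\Mnc((A-\bx\bx^T)\otimes L)$, read off from its degree-graded diagonal blocks that $A\otimes L((\bx\bx^T)^{\otimes (\ell-1)})-L((\bx\bx^T)^{\otimes \ell})\succeq 0$ for each $\ell$, and then telescope down to $A^{\otimes \ell}$ using $L(\bx\bx^T)=A$. If anything, you are slightly more careful than the paper, which in its chain of inequalities silently uses that tensoring with $A$ preserves the semidefinite order (i.e., that $A\succeq 0$, which holds in context since $A$ is completely positive) — a point you flag explicitly.
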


\begin{proof}
	As observed above, $M((A-\bx\bx^T)\otimes L) \succeq 0$ if and only if $\Mnc((A-\bx\bx^T)\otimes L) \succeq 0$. Note that for each $\ell \in \N$, the matrix $\Mnc((A-\bx\bx^T)\otimes L)$ contains $L( (A-\bx\bx^T) \otimes (\bx\bx^T)^{\otimes \ell-1})$ as a principal submatrix. To see this write  the vector $\langle x\rangle$ of noncommutative monomials  as 
	$
	1 \oplus_{\ell \in \N} x^{\otimes \ell}
	$
	by grouping the monomials according to their degree.
	With respect to this partition of its index set  
	the matrix $\Mnc((A-xx^*)\otimes L)$ 
	has  the matrices $L( (A-\bx\bx^*) \otimes (\bx\bx^*)^{\otimes \ell-1})$ as its   diagonal blocks. 
Since $\Mnc((A-\bx\bx^*)\otimes L) \succeq 0$, we  obtain
	\[
	A \otimes L((\bx\bx^T)^{\otimes \ell-1}) \succeq L((\bx\bx^T)^{\otimes \ell}) \qquad \text{ for all } \ell \in \N.
	\]
	Combined with $L(\bx\bx^*) = A$ this permits to show \cref{eq: tensor}:
	\[
	 L((\bx\bx^T)^{\otimes \ell} \preceq A \otimes  L((\bx\bx^T)^{\otimes (\ell-1)} \preceq A^{\otimes 2} \otimes  L((\bx\bx^T)^{\otimes (\ell-2)} \preceq \cdots \preceq A^{\otimes( \ell-1)} \otimes L(\bx\bx^T) = A^{\otimes \ell}. \qedhere
	\]
\end{proof}

We conclude this section with some numerical results. To demonstrate the impact of the constraints (\ref{eq: GL cp}) we compare our bounds $\xib{cp}{3}(A)$ to the bounds 
$\xib{cp}{3,(2019)}(A)$ from \cite{GdLL17a} on the cp-rank of some matrices $A$ known to have a high cp-rank, taken from \cite{BOMZE2014208}. The boldface entries in Table \ref{CP_bounds} show a strict improvement in the bounds. For these computations we used the high precision solver SDPA-GMP \cite{5612693} because MOSEK \cite{Mosek} and SDPA \cite{Yamashita10ahigh-performance, Yamashita03implementationand} could not certify solutions.\footnote{The code is available at: \url{https://github.com/JAndriesJ/ju-cp-rank}}

\begin{table}[!htbp]\label{CP_bounds}
	\centering
	\caption{Bounds for completely positive rank at level t=3.}
	\begin{tabular}{|c c c c c c c  |} 
		\toprule
		$A$ &  $\rank(A)$ &$n$& $\lfloor \frac{n^2}{4}\rfloor$ & $\xib{cp}{3,(2019)}(A)$ & $\xib{cp}{3}(A)$ & $\cprank(A)$ \\ [0.5ex] 
		\hline\hline
		$M_7$ & 7&7&12&10.5&\textbf{11.4} & 14    \\ 
		\hline
		$\wtl M_7$  &7&7&12&10.5&10.5 &  14    \\  
		\hline 
		$\wtl M_8$   &8&8&16&13.82 &\textbf{14.5} & 18     \\ 
		\hline  
		$\wtl M_9$   &9&9&20&17.74&\textbf{18.4}&  26    \\  
		\bottomrule
	\end{tabular}
\end{table}

\section{Entanglement witnesses}\label{secDPS}

The moment approach we have developed in the previous section for bounding the separable rank of a state $\rho$ can be viewed as searching for a (non-normalized) measure on the product of two balls, with the additional property that, for any point $(x,y)$ in its support, we have $\rho - xx^* \otimes yy^* \succeq 0$.  
We will first observe in Section \ref{sec:mem} how  this approach can also be used to detect entanglement, i.e., non-membership in the set $\SEP$.

As mentioned earlier  
one can  also capture the set $\SEP$ by viewing it as a moment problem 
on the bi-sphere (the product of two unit spheres). In the rest of this section we will show that this second moment approach  corresponds exactly to the well-known state extension perspective  that leads to the Doherty-Parrilo-Spedalieri hierarchy of approximations of $\sep$ from \cite{DPS04}.

\subsection{Entanglement witnesses based on the hierarchy of parameters $\xidsep{t}$}\label{sec:mem}
Our approach to design lower bounds on the separable rank also directly leads to a way to detect non-membership in the set $\SEP$ or,  in other words,  to a way to witness entanglement of a state. Indeed, as shown in \cref{lemmembership}, a state $\rho$ is separable if and only if $\xidsep{t} \leq \rank(\rho)^2$ for all $t \ge 2$. In other words, $\rho$ is entangled if and only if $\xidsep{t}>\rank(\rho)^2$ for some integer $t\ge 2$ (which includes $\xidsep{t}=\infty$ in case the program defining 
$\xidsep{t}$ is infeasible).

In order to get a certificate of entanglement it is therefore convenient to consider the dual semidefinite program to the program (\ref{eqxit}) defining the parameter 
$\xidsep{t}$, which reads:
	\begin{equation} \label{eqxit dual}
	\begin{split}
\sup \Big\{ \langle \rho, \Lambda\rangle  ~|~  \Lambda \in \C^{d \times d} \otimes \C^{d \times d} &  \text{ Hermitian  s.t. }  \\
	 1-\langle \Lambda, xx^*\otimes yy^*\rangle \in 
	  &\ \calM(S_\rho)_{2t}
	 + \cone\{ \langle G_\rho, \vec{p} \vec{p}^*\rangle: \vec p \in (\C[\bx,\bbx,\by,\bby]_{t-2})^{d^2} \} \Big\}.
	\end{split}
	\end{equation}
	
	\begin{lemma}
	For any integer $t \geq 2$, the matrix $\Lambda = 0$ is a strictly feasible solution  for \eqref{eqxit dual}.
	\end{lemma}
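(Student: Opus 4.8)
The plan is to unfold what strict feasibility means for $\Lambda = 0$ and reduce it to an interior-point statement. Setting $\Lambda = 0$ kills the term $\langle \Lambda, xx^*\otimes yy^*\rangle$, so the constraint in \eqref{eqxit dual} becomes simply $1 \in \mathcal C_t$, where I write
\[
\mathcal{C}_t := \calM(S_\rho)_{2t} + \cone\{\langle G_\rho, \vec p \vec p^*\rangle : \vec p \in (\C[\bx,\bbx,\by,\bby]_{t-2})^{d^2}\}
\]
viewed as a convex cone in the finite-dimensional real space $V$ of Hermitian polynomials of degree at most $2t$ (note $1\le\deg\le 4\le 2t$ since $t\ge2$). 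Strict feasibility here means $1 \in \mathrm{int}(\mathcal C_t)$, so this is exactly what I would aim to prove. Feasibility itself is immediate, since $1 = 1\cdot\overline 1 \in \Sigma_{2t}\subseteq\calM(S_\rho)_{2t}\subseteq\mathcal C_t$.

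To access the interior I would use the standard characterization via the dual cone: for a convex cone with nonempty interior, $1 \in \mathrm{int}(\mathcal C_t)$ if and only if $L(1) > 0$ for every nonzero $L \in \mathcal C_t^*$. Nonemptiness of $\mathrm{int}(\mathcal C_t)$ is clear because $\mathcal C_t \supseteq \Sigma_{2t}$ and the sum-of-squares cone $\Sigma_{2t}$ already has nonempty interior in $V$ (e.g.\ any Hermitian sum of squares with a positive definite Gram matrix is an interior point). Next I would identify $\mathcal C_t^*$: since the dual of a Minkowski sum is the intersection of the duals, combining \eqref{eq_pos_L_ML} (for the quadratic-module summand) with the characterization \eqref{eq: M(ML) psd} of Lemma \ref{lemMGLpos} (for the $G_\rho$-summand), the cone $\mathcal C_t^*$ consists precisely of the Hermitian functionals $L$ satisfying $M_t(L)\succeq 0$, $M_{t-1}(gL)\succeq 0$ for $g\in S_\rho$, and $M_{t-2}(G_\rho\otimes L)\succeq 0$ — that is, the feasible functionals of \eqref{eqxit} with the affine constraint dropped. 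In particular every $L\in\mathcal C_t^*$ is nonnegative on $\calM(S_\rho)_{2t}$.

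The key step is then to show that any $L\in\mathcal C_t^*$ with $L(1)=0$ is identically zero. Here I would exploit that $\calM(S_\rho)$ is Archimedean: as recorded in the proof of Lemma \ref{lemconv}, one has $R - \sum_{i=1}^d(x_i\overline x_i + y_i\overline y_i)\in\calM(S_\rho)_2$ with $R = 2d\sqrt{\rho_{\max}}$. Consequently Lemma \ref{pointconv} applies to $L$ and yields $|L(w)| \le R^{|w|/2}\,L(1) = 0$ for every monomial $w\in\monoV_{2t}$, forcing $L\equiv 0$ on $V$. This establishes $L(1)>0$ for all nonzero $L\in\mathcal C_t^*$, hence $1\in\mathrm{int}(\mathcal C_t)$, i.e.\ $\Lambda=0$ is strictly feasible for \eqref{eqxit dual}.

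I expect the only delicate point to be the topological bookkeeping underlying the interior-via-dual-cone characterization (ensuring that $\mathrm{int}(\mathcal C_t)=\mathrm{int}(\overline{\mathcal C_t})$ and that the characterization is valid). This is harmless and again handled by Lemma \ref{pointconv}: it shows the slice $\{L\in\mathcal C_t^* : L(1)=1\}$ is bounded, so $\mathcal C_t^*$ has a compact base and is a closed, pointed cone, which together with the already-noted nonemptiness of $\mathrm{int}(\mathcal C_t)$ legitimizes the duality argument. The genuinely substantive ingredient is thus the Archimedean moment bound of Lemma \ref{pointconv}; the rest is formal convex duality.
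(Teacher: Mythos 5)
Your proof is correct, but it takes a genuinely different route from the paper's. The paper argues by explicit certificates: it first shows that $\Lambda=\lambda I_{d^2}$ is feasible for all $0<\lambda\le R^{-2}$ by manufacturing $R^2-\sum_{i,j}x_i\ol x_i y_j\ol y_j\in\calM(S_\rho)_{2t}$ out of products of the Archimedean certificates $R-\sum_i x_i\ol x_i$ and $R-\sum_i y_i\ol y_i$, and then extends to every $\Lambda$ with $\|\Lambda\|\le R^{-2}$ by splitting $1-\langle\Lambda, xx^*\otimes yy^*\rangle$ into the term for $\|\Lambda\| I_{d^2}$ plus the Hermitian square term $\langle \|\Lambda\| I_{d^2}-\Lambda, xx^*\otimes yy^*\rangle$. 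This yields a concrete ball of radius $R^{-2}$ inside the feasible region of \eqref{eqxit dual}, uses only the quadratic-module summand of your cone $\mathcal{C}_t$ (the $G_\rho$-summand is never touched), and interprets ``strictly feasible'' as exactly this ball statement. You instead run a separation/duality argument: you identify $\mathcal{C}_t^*$ (via \eqref{eq_pos_L_ML} and Lemma \ref{lemMGLpos}) with the moment-side cone of \eqref{eqxit} without the affine constraint, use the Archimedean bound of Lemma \ref{pointconv} to show any $L\in\mathcal{C}_t^*$ with $L(1)=0$ vanishes identically, and conclude $1\in\mathrm{int}(\mathcal{C}_t)$; all the supporting steps check out (the Gram-map argument does give $\Sigma_{2t}$ nonempty interior in the Hermitian polynomial space, which legitimizes both the interior-via-dual characterization and $\mathrm{int}(\mathcal{C}_t)=\mathrm{int}(\ol{\mathcal{C}_t})$, and in fact only the inclusion $\mathcal{C}_t^*\subseteq \calM(S_\rho)_{2t}^*$ is needed, so your full identification of $\mathcal{C}_t^*$ and the compact-base remark are more than necessary). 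The trade-off: your conclusion is actually \emph{stronger} than what the paper proves --- interiority of $1$ in the cone $\mathcal{C}_t$ inside the full polynomial space is the textbook Slater condition for the conic pair, whereas a ball in $\Lambda$-space only witnesses interiority along the $d^4$-dimensional subspace of perturbations $\langle\Lambda, xx^*\otimes yy^*\rangle$, and your version is the formally cleaner hypothesis for the strong-duality claim the paper draws next --- but it is non-constructive, while the paper's argument delivers an explicit feasibility radius. Both proofs ultimately rest on the same ingredient, the Archimedean certificate $R-\sum_i(x_i\ol x_i+y_i\ol y_i)\in\calM(S_\rho)_2$.
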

	
\begin{proof}
	First we observe that, for small  $\lambda>0$, the matrix $\Lambda = \lambda \cdot I_{d^2}$ is a feasible solution for (\ref{eqxit dual}).
	For this we show that  the polynomial $1-\lambda \langle I, xx^*\otimes yy^*\rangle=1-\lambda \sum_i\sum_j x_i\ol x_iy_j\ol y_j$ lies in the quadratic module $\calM(S_\rho)_{2t}$ for small $\lambda>0$.  We know that there exists a scalar $R>0$  such that $R - \sum_{i} x_i \ol x_i - \sum_j y_j \ol y_j\in \cal M(S_\rho)$. Then also $R-\sum_{i} x_i \ol x_i$ and 
	$R- \sum_iy_i \ol y_i$ lie in $\calM(S_\rho)_{2t}$, as well as 
	$(R-\sum_{i} x_i \ol x_i)(R+ 	\sum_iy_i \ol y_i)$ and 
	$(R+\sum_{i} x_i \ol x_i)(R- 	\sum_iy_i \ol y_i)$. Adding up the latter two polynomials we obtain that the polynomial 
	$R^2- \sum_i\sum_j x_i\ol x_i y_j\ol y_j$ belongs to $\calM(S_\rho)_{2t}$, which shows that 		
	$\Lambda = \lambda \cdot I_{d^2}$ is feasible for all $0<\lambda\le R^{-2}$. 
	
	We now show that any $\Lambda$ satisfying  $\|\Lambda\|\leq R^{-2}$ is feasible (which shows there is a ball contained in the feasible region of~\eqref{eqxit dual}). 
	For this write
	\[
	1- \langle \Lambda,xx^*\otimes yy^*\rangle = \underbrace{1- \langle \|\Lambda\| I_{d^2}, xx^*\otimes yy^*\rangle}_{(a)} +  
	\underbrace{\langle \|\Lambda\| I_{d^2} - \Lambda, xx^*\otimes yy^* \rangle}_{(b)}.
	\]
	In the first part of the proof we have shown that term $(a)$ belongs to $\calM(S_\rho)_{2t}$ if $\|\Lambda\| \leq R^{-2}$. In addition,  term $(b)$ is a sum of squares since $\|\Lambda\| I_{d^2} - \Lambda$ is positive semidefinite. Together, this shows $1-\langle \Lambda, xx^*\otimes yy^*\rangle  \in \calM(S_\rho)_{2t}$ and therefore $\Lambda$ is feasible. 
	\end{proof}
	
As a consequence, strong duality holds between the program (\ref{eqxit}) defining $\xidsep{t}$ and its dual~\eqref{eqxit dual}. 
That is, if the program (\ref{eqxit dual}) is bounded then its optimal value is finite and equal to $\xidsep{t}$ and, otherwise, its optimal value is equal to $\infty$ and thus $ \xidsep{t}$  is infeasible.
Therefore, we obtain that $\rho$ is entangled if and only if, for some integer $t\ge 2$,
  there exists a matrix $\Lambda \in \C^{d \times d} \otimes \C^{d \times d}$ which is feasible for \eqref{eqxit dual} and satisfies $\langle \rho, \Lambda\rangle>\rank(\rho)^2$. In that case such matrix $\Lambda$ provides a certificate that the state $\rho$ is entangled. 

\subsection{The Doherty-Parrilo-Spedalieri hierarchy: moment perspective}\label{sec:momentDPS}

Recall definition (\ref{SEP}) of the set of separable states $\SEP_d$,
so $\rho \in\SEP_d$ if and only if it is of the form
\begin{equation}\label{decrho}
\rho=\sum_{\ell=1}^r \lambda_\ell \ a_\ell a_\ell^*\otimes b_\ell b_\ell^*,
\end{equation}
where $\lambda_\ell>0$, $a_\ell,b_\ell\in \C^d$ with $\|a_\ell \|=1=\|b_\ell\|$. To this decomposition we can associate a linear functional  on $\C[\bx,\bbx,\by,\bby]$ that is a conic combination of evaluation functionals at points on the bi-sphere: $L = \sum_{\ell=1}^r \lambda_\ell L_{(a_\ell,b_\ell)}$. By construction, this linear functional is positive on Hermitian squares, it vanishes  on the ideal generated by $1-\|\bx\|^2$ and $1-\|\by\|^2$ (called the {\em bi-sphere ideal} for short) and it satisfies $L(xx^* \otimes yy^*) = \rho$. This naturally suggests a hierarchy of outer approximations to the set $\SEP$: a state $\rho$ belongs to the $t$-th level of this hierarchy if there exists an $L$ that satisfies these constraints for polynomials of degree at most~$2t$. Formally, we consider the set 
\begin{equation} \label{eq: dpst}
\begin{split}
	\set_{t} := \{ \rho \in \calH^d\otimes \calH^d :\ &\exists L:\C[\bx, \qo \bx, \by, \qo \by]_{2t} \to \C \text{ Hermitian s.t.  }\\
	& L(\bx\bx^*\otimes \by\by^*)=\rho,\\
	& L = 0 \text{ on } \mathcal I(1-\|\bx\|^2,1-\|\by\|^2 )_{2t},\\
	& M_t(L)\succeq 0 
	\}.
\end{split}
\end{equation}
We will show in Section \ref{sec: state extension perspective}  that this set is in fact closely related to the DPS hierarchy of outer approximations to the set $\sep_d$: if we introduce separate degree-bounds on the  $\bx,\bbx$ variables and the $\by,\bby$ variables, then we recover the original formulation from \cite{DPS04}. 

First we note that we can easily show that the sets $\set_{t}$ converge to $\sep$, i.e.,   $\SEP_d=\bigcap_{t\ge 2}\set_{t}$, using the tools from polynomial optimization (\cref{theomainTchakaloff}).

\begin{proposition} \label{lem: convergence set t}
We have: $\SEP_d=\bigcap_{t\ge 2} \set_{t}.$
\end{proposition}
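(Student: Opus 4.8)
The plan is to prove the two inclusions $\SEP_d \subseteq \bigcap_{t\ge 2}\set_t$ and $\bigcap_{t\ge 2}\set_t \subseteq \SEP_d$ separately.

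The easy inclusion is $\SEP_d \subseteq \set_t$ for every $t\ge 2$. Given $\rho\in\SEP_d$ with a decomposition (\ref{decrho}) using unit vectors $a_\ell,b_\ell$ on the bi-sphere, I would simply take $L=\sum_{\ell=1}^r \lambda_\ell L_{(a_\ell,b_\ell)}$ and verify that it is feasible for the program defining $\set_t$. Indeed, $L$ is Hermitian (being a real conic combination of Hermitian evaluation functionals), it satisfies $L(\bx\bx^*\otimes\by\by^*)=\sum_\ell \lambda_\ell a_\ell a_\ell^*\otimes b_\ell b_\ell^*=\rho$, it vanishes on the bi-sphere ideal since each evaluation point $(a_\ell,b_\ell)$ satisfies $\|a_\ell\|^2=\|b_\ell\|^2=1$, and its moment matrix $M_t(L)=\sum_\ell \lambda_\ell [a_\ell,\ldots][a_\ell,\ldots]^*$ is a conic combination of rank-one positive semidefinite matrices, hence positive semidefinite. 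This establishes $\SEP_d\subseteq\bigcap_{t\ge 2}\set_t$.

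For the reverse (and more substantial) inclusion, suppose $\rho\in\bigcap_{t\ge 2}\set_t$. The plan is to extract from the family of feasible functionals $(L_t)_t$ a single functional $L$ on the full polynomial ring, then apply the moment-theoretic machinery of Theorem \ref{theomainTchakaloff}. First I would note that the bi-sphere quadratic module is Archimedean: the constraint $L=0$ on the ideal generated by $1-\|\bx\|^2$ and $1-\|\by\|^2$ forces $L(x_i\overline{x_i})$ and $L(y_i\overline{y_i})$ to be controlled, and $2-\sum_i(x_i\overline{x_i}+y_i\overline{y_i})$ lies in the relevant quadratic module (modulo the ideal). Applying \cref{pointconv} together with the uniform bound $L_t(1)=\Tr(\rho)$ (which holds because $L_t(\sum_{i,j}x_i\overline{x_i}y_j\overline{y_j})=\Tr(\rho)$ and the bi-sphere constraints force $L_t(1)=L_t(\|\bx\|^2\|\by\|^2)=\Tr(\rho)$), I obtain a pointwise-convergent subsequence with limit $L\in\C[\bx,\bbx,\by,\bby]^*$. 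This limiting $L$ is Hermitian, satisfies $L(\bx\bx^*\otimes\by\by^*)=\rho$, vanishes on the bi-sphere ideal, and is nonnegative on all Hermitian squares (hence on the full quadratic module $\calM(S)$ with $S=\{\pm(1-\|\bx\|^2),\pm(1-\|\by\|^2)\}$). By \cref{theomainTchakaloff}(ii) applied with $k=4$, the restriction of $L$ to degree-$4$ polynomials is a conic combination $\sum_{\ell=1}^K\mu_\ell L_{(a_\ell,b_\ell)}$ of evaluations at points $(a_\ell,b_\ell)\in\sD(S)$, i.e.\ on the bi-sphere. Evaluating at $\bx\bx^*\otimes\by\by^*$ gives $\rho=\sum_\ell \mu_\ell a_\ell a_\ell^*\otimes b_\ell b_\ell^*$, which exhibits $\rho$ as a conic combination of the required rank-one terms, so $\rho\in\SEP_d$.

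The main obstacle I anticipate is the passage to the limit and the verification of the Archimedean property in the presence of the \emph{ideal} constraint rather than inequality constraints. Specifically, one must be careful that the equality constraints $L=0$ on $\mathcal I(1-\|\bx\|^2,1-\|\by\|^2)$ for the truncated functionals $L_t$ are inherited by the limit $L$, and that they indeed yield an Archimedean quadratic module so that Theorem \ref{theomainTchakaloff} applies. I would handle this by observing that the bi-sphere ideal constraints let us replace $\sum_i x_i\overline{x_i}$ by $1$ and $\sum_i y_i\overline{y_i}$ by $1$ on the support, giving the boundedness certificate needed for \cref{pointconv}, and that pointwise convergence preserves both the linear (ideal) constraints and the positivity on $\calM(S)$. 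Everything else is a routine application of the already-established moment-theoretic results.
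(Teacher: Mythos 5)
Your proposal is correct and takes essentially the same route as the paper: the forward inclusion is the construction already sketched when $\set_t$ is introduced, and for the reverse inclusion you, like the paper, use the bound $L_t(1)=L_t(\|\bx\|^2\|\by\|^2)=\Tr(\rho)$ together with \cref{pointconv} to extract a pointwise limit $L$, and then apply \cref{theomainTchakaloff} (with $k=4$) to get an atomic representation on the bi-sphere yielding a separable decomposition of $\rho$. The only difference is cosmetic: you make explicit the encoding of the ideal constraint as the inequalities $\pm(1-\|\bx\|^2),\pm(1-\|\by\|^2)$ and the resulting Archimedean certificate, details the paper leaves implicit.
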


\begin{proof}
Assume $\rho\in \bigcap_{t\ge 2}\set_{t}$, we show  $\rho\in \SEP_d$. For any $t\ge 2$ let $L_t$ be an associated certificate of membership in $\set_{t}$.
Then we have $L_t(1)=L_t(\|\bx\|^2\|\by\|^2)= \Tr(\rho)$. Hence it follows from \cref{pointconv} that the sequence $(L_t)_t$ has a pointwise converging subsequence, with limit $L\in \C[\bx,\bbx,\by,\bby]^*$. 
Then $L\ge 0$ on $\Sigma$ and $L=0$ on $\mathcal I(1-\|\bx\|^2,1-\|\by\|^2)$. Using \cref{theomainTchakaloff} we can conclude that there exists scalars $\mu_\ell>0$ and points $(a_\ell,b_\ell)\in \C^d\times \C^d$ with $\|a_\ell\| = \|b_\ell\|=1$ such that 
$L(p)=\sum_{\ell=1}^K \mu_\ell p(a_\ell,b_\ell)$ when $p$ has degree at most 4. In particular, we obtain 
$$\rho=L(\bx\bx^*\otimes \by\by^*) =\sum_{\ell=1}^K \mu_\ell \ a_\ell a_\ell^*\otimes b_\ell b_\ell^*,$$
which shows that $\rho\in \SEP_d$.
\end{proof}

Next, we reformulate the positivity condition $M_t(L)\succeq 0$ in a way that will be useful for making the link to the DPS hierarchy.
As observed in \cref{sec: block diagonalization}, we may additionally require the linear functionals in \cref{eq: dpst} to satisfy the constraint \cref{eqL0}, which we repeat here for convenience:
\begin{equation}\label{eqL02}
L(\bx^\alpha\bbx^{\alpha'}\by^\beta\bby^{\beta'})=0 \ \text{ if } |\alpha|\ne| \alpha' |\text{ or } |\beta| \ne |\beta'|.
\end{equation}
This permits to block-diagonalize the associated moment matrix $M_t(L)$ according to the partition given in \cref{eq: index set,eq: partition}, thus permitting to replace the constraint $M_t(L)\succeq 0$ by $M_t(L)[I^t_{r,s}]\succeq0$ for $r,s\in [-t,t]$.
 In fact, using the bi-sphere ideal constraint, one can reduce the size of these matrices  
 even further  and replace the matrices $M_t(L)[I^t_{r,s}]$ by their submatrices $M_t(L)[I^{=t}_{r,s}]$, where the sets  $I^{=t}_{r,s}\subseteq I^t_{r,s}$ are defined by  \begin{equation} \label{eq: block diag hom t}
I^{=t}_{r,s} :=\left\{(\alpha,\alpha',\beta,\beta') \in (\N^d)^4: |\alpha+\alpha'+\beta+\beta'|=t, \ |\alpha|-|\alpha'| = r, \ |\beta|-|\beta'|=s\right\}. 
\end{equation}
In other words we can show
the following  reformulation of the set $\set_t$: 
\begin{equation} \label{eq: dpst SDP}
\begin{split}
	\set_{t} = \big\{ \rho \in \calH^d\otimes \calH^d :\ &\exists L:\C[\bx, \qo \bx, \by, \qo \by]_{2t} \to \C \text{ Hermitian s.t.  }\\
	& L(\bx\bx^*\otimes \by\by^*)=\rho,\\
	& L = 0 \text{ on } \mathcal I(1-\|\bx\|^2,1-\|\by\|^2 )_{2t},\\
	& M_{t}(L)[I^{=t}_{r,s}] \succeq 0 \text{ for all } r,s \in \{-t,-t+1,\ldots,t\} \big\}.
\end{split}
\end{equation}
We will show this result in a slightly different setting (closer to that of the original  formulation of the DPS hierarchy). Similar arguments as those used in the proof of Lemma \ref{prop: restrict to hom} below can be used to show the equivalence between \eqref{eq: dpst} and \eqref{eq: dpst SDP}. 

\medskip
In order to connect the moment approach on the bi-sphere to the original formulation of the DPS hierarchy we need to introduce a separate degree bound on the $\bx,\bbx$ variables and the $\by,\bby$ variables. For integers $k,t \ge 1$ we let $\C[\bx,\bbx,\by, \bby]_{k,t}$ (resp., $\C[\bx,\bbx,\by,\bby]_{=k,=t}$) denote the set of polynomials that have degree at most $k$ (resp., equal to $k$) in $\bx,\bbx$ and degree at most $t$ (resp., equal to $t$) in $\by,\bby$, and we set
\[
\Sigma_{2k,2t}= \cone\{ p \ol p: p\in \C[\bx,\bbx,\by,\bby]_{k,t}\}=\Sigma \cap \C[\bx,\bbx,\by, \bby]_{2k,2t},\quad \Sigma_{=2k,=2t}=\Sigma\cap \C[\bx,\bbx,\by,\bby]_{=2k,=2t}.
\]
We define the sets 
\begin{equation} \label{eq: dps kl}
\begin{split}
	\set_{k, t} := \{ \rho \in \calH^d\otimes \calH^d :\ &\exists L:\C[\bx, \qo \bx, \by, \qo \by]_{2k,2t} \to \C \text{ Hermitian s.t.  }\\
	& L(\bx\bx^*\otimes \by\by^*)=\rho,\\
	& L = 0 \text{ on } \mathcal I(1-\|\bx\|^2,1-\|\by\|^2 )_{2k,2t},\\
	& L \geq 0 \text{ on } \Sigma_{2k,2t} \}.
\end{split}
\end{equation}
Note the inclusion $\set_{k,t}\subseteq \set_{k+t}$. The two regimes that we will be interested in are $k=1$ and $k=t$
since, as we will show in Section \ref{sec: state extension perspective}, the sets $\set_{1,t}$ and $\set_{t,t}$ (for $t\in \N$) coincide with the approximation hierarchies $\dps_{1,t}$ and $\dps_{t,t}$ from \cite{DPS04}.

We will give a more economical reformulation for the positivity condition on $L$ in \cref{propRkl}. For this we first show that for linear functionals $L$ that vanish on the bi-sphere ideal  and satisfy \eqref{eqL02} the following two positivity conditions are equivalent: $L \geq 0$ on $\Sigma_{2k,2t}$ and $L \geq 0$ on $\Sigma_{=2k,=2t}$. That is, we only need to require positivity on \emph{homogeneous} polynomials. 

\begin{lemma} \label{prop: restrict to hom}
Let $L \in \C[\bx,\bbx,\by,\bby]_{2k,2t}^*$ be such that $L =0$ on $\mathcal I(1-\|\bx\|^2,1-\|\by\|^2 )_{2k,2t}$ and $L$ satisfies \cref{eqL02}. Then we have $L \geq 0$ on $\Sigma_{2k,2t}$ if and only if $L \geq 0$ on $\Sigma_{=2k,=2t}$.
\end{lemma}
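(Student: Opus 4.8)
The plan is to prove the two directions separately, treating $(\Rightarrow)$ as the routine inclusion and $(\Leftarrow)$ as the substantial one. For $(\Rightarrow)$ I would first record the standard bihomogeneity fact: any $S \in \Sigma$ that happens to be bihomogeneous of bidegree $(2k,2t)$, i.e. $S \in \C[\bx,\bbx,\by,\bby]_{=2k,=2t}$, is automatically a sum of Hermitian squares $r\overline r$ with each $r$ bihomogeneous of bidegree $(k,t)$. This follows by writing $S = \sum_i q_i \overline{q_i}$, decomposing each $q_i$ into its $(\bx,\bbx)$-homogeneous parts, and noting that the lowest-degree homogeneous layer of $S$ in the $(\bx,\bbx)$-grading equals $\sum_i q_i^{\min}\overline{q_i^{\min}}$, a sum of Hermitian squares which cannot vanish unless each $q_i^{\min}$ vanishes; hence all $q_i$ are $(\bx,\bbx)$-homogeneous of degree $k$, and repeating the argument in the $(\by,\bby)$-grading makes them bihomogeneous of bidegree $(k,t)$. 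Since such $r$ lie in $\C[\bx,\bbx,\by,\bby]_{k,t}$, this gives $\Sigma_{=2k,=2t} \subseteq \Sigma_{2k,2t}$, so positivity on $\Sigma_{2k,2t}$ restricts to positivity on $\Sigma_{=2k,=2t}$. This direction uses neither the ideal nor \eqref{eqL02}.

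For $(\Leftarrow)$, assume $L \ge 0$ on $\Sigma_{=2k,=2t}$ and take $p \in \C[\bx,\bbx,\by,\bby]_{k,t}$; I must show $L(p\overline p) \ge 0$. The first step is to split the problem by weight using \eqref{eqL02}. Writing $p = \sum_{r,s} p^{(r,s)}$, where $p^{(r,s)}$ collects the monomials of $p$ of weight $(|\alpha|-|\alpha'|,|\beta|-|\beta'|) = (r,s)$, the product $p^{(r,s)}\overline{p^{(r',s')}}$ consists of monomials of weight $(r-r',s-s')$, and \eqref{eqL02} forces $L$ to annihilate it unless $r = r'$ and $s = s'$. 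Hence $L(p\overline p) = \sum_{r,s} L(p^{(r,s)}\overline{p^{(r,s)}})$, and it suffices to prove $L(q\overline q) \ge 0$ for a single fixed-weight $q := p^{(r,s)} \in \C[\bx,\bbx,\by,\bby]_{k,t}$.

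The second step is to homogenize $q$ modulo the bi-sphere ideal. Every monomial of $q$ has $(\bx,\bbx)$-degree $a \equiv r \pmod 2$ with $a \le k$ and $(\by,\bby)$-degree $b \equiv s \pmod 2$ with $b \le t$; let $k^*$ (resp. $t^*$) be the largest integer $\le k$ (resp. $\le t$) congruent to $r$ (resp. $s$) modulo $2$, so $k - k^*, t - t^* \in \{0,1\}$ and $a \le k^*$, $b \le t^*$. Multiplying each monomial $m$ of bidegree $(a,b)$ by $(\|\bx\|^2)^{(k^*-a)/2}(\|\by\|^2)^{(t^*-b)/2}$ yields a polynomial $\tilde q$ that is bihomogeneous of bidegree $(k^*,t^*)$ and satisfies $\tilde q \equiv q$ modulo $\mathcal I(1-\|\bx\|^2,1-\|\by\|^2)$. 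I would then set $w := (\|\bx\|^2)^{k-k^*}(\|\by\|^2)^{t-t^*}\,\tilde q\,\overline{\tilde q}$. Since $\|\bx\|^2 = \sum_i x_i\overline{x_i}$ and $\|\by\|^2 = \sum_j y_j\overline{y_j}$ are themselves sums of Hermitian squares, $w$ is a sum of Hermitian squares of bidegree exactly $(2k,2t)$, hence $w \in \Sigma_{=2k,=2t}$; and $w \equiv \tilde q\,\overline{\tilde q} \equiv q\overline q$ modulo the ideal. Provided $w - q\overline q$ lies in the truncated ideal $\mathcal I(1-\|\bx\|^2,1-\|\by\|^2)_{2k,2t}$, the hypotheses give $L(q\overline q) = L(w) \ge 0$.

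The main obstacle is the degree bookkeeping in this final step: one must verify that $w - q\overline q$ can be written as $f\,(1-\|\bx\|^2) + g\,(1-\|\by\|^2)$ with $f \in \C[\bx,\bbx,\by,\bby]_{2k-2,2t}$ and $g \in \C[\bx,\bbx,\by,\bby]_{2k,2t-2}$, so that it genuinely belongs to the truncated ideal on which $L$ vanishes. Here the uniform weight of $q$ from the first step is essential: it guarantees that all monomials share the parities of $r$ and $s$, so a single pair $(k^*,t^*)$ homogenizes $q$ to an honestly bihomogeneous $\tilde q$ (otherwise $\tilde q\,\overline{\tilde q}$ would not land in $\Sigma_{=2k,=2t}$), and the multiplication by $(\|\bx\|^2)^{k-k^*}(\|\by\|^2)^{t-t^*}$ absorbs the parity defect. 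The telescoping identities $(\|\bx\|^2)^i - 1 = (\|\bx\|^2 - 1)\sum_{\ell < i}(\|\bx\|^2)^\ell$ and its $\by$-analogue make the required ideal decomposition explicit while keeping all multipliers within the stated degree bounds.
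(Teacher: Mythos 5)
Your proof is correct and follows essentially the same route as the paper: both arguments use \cref{eqL02} to kill cross terms in a decomposition of $p$, then homogenize each piece modulo the bi-sphere ideal by padding monomials with even powers of $\|\bx\|^2,\|\by\|^2$, absorb the remaining parity defect with one extra factor of $\|\bx\|^2$ and/or $\|\by\|^2$, and invoke positivity on $\Sigma_{=2k,=2t}$. The only (harmless) differences are cosmetic: you decompose by exact weight $(r,s)$ where the paper decomposes only by the parities of $|\alpha+\alpha'|$ and $|\beta+\beta'|$ (parity is all that is needed for the homogenization), and you spell out the easy direction and the truncated-ideal degree bookkeeping that the paper leaves implicit.
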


\begin{proof}
Assume $L \geq 0$ on $\Sigma_{=2k,=2t}$. We show that $L$ is positive on Hermitian squares in $\Sigma_{2k,2t}$. Let  $p  \in \C[\bx,\bbx,\by,\bby]_{k,t}$, we want to show that $L(p \ol p)\ge 0$. For this we decompose $p$ as  $p=p_{00}+p_{01}+p_{10}+p_{11}$, where, for $a,b\in \{0,1\}$,  we group in $p_{ab}$ the terms of $p$ that involve a monomial $\bx^\alpha\bbx^{\alpha'}\by^\beta\bby^{\beta'}$ with
	$|\alpha+\alpha'| \equiv k-a$ modulo 2 and $|\beta+\beta'| \equiv t-b$ modulo 2. 
	Then we have 
	\[
	L(p \ol p) = \sum_{a,b \in \{0,1\}} L(p_{ab}\ol p_{ab}),
	\]
	where  we use \cref{eqL02} to see that $L(p_{ab}\ol p_{a'b'})=0$ if $(a,b)\ne (a',b')$.
Hence it remains to show that $L(p_{ab}\ol p_{ab})\ge 0$ for $a,b\in\{0,1\}$. Write $p_{ab}=\sum c_{\alpha\alpha'\beta\beta'} 	\bx^\alpha\bbx^{\alpha'}\by^\beta\bby^{\beta'}$. We define the polynomial
\[
q_{ab}= \sum c_{\alpha\alpha'\beta\beta'} 	\bx^\alpha\bbx^{\alpha'}\by^\beta\bby^{\beta'}  \|\bx\|^{k - a - |\alpha + \alpha'|}\|\by\|^{t-b-|\beta+\beta'|}.
\]
Note that in each term the powers of $\|\bx\|$ and $\|\by\|$ are by construction both even and nonnegative and therefore the polynomial $q_{ab}$ is homogeneous of degree $k-a$ in $\bx,\bbx$ and of degree $t-b$ in $\by,\bby$. Since $L$ vanishes on the truncated ideal generated by $1-\|\bx\|^2$ and $1-\|\by\|^2$ we have 
\[
L(p_{ab}\ol p_{ab})= L(q_{ab} \ol q_{ab})= L(q_{ab} \ol q_{ab}\|\bx\|^{2a}\|\by\|^{2b})\ge 0,
\]
where the last inequality follows from the fact that $q_{ab} \ol q_{ab}\|\bx\|^{2a}\|\by\|^{2b} \in \Sigma_{=2k,=2t}$.	
\end{proof}

We now proceed to define the analog of \cref{eq: block diag hom t} for the $(k,t)$-setting. Given two integers $r \in \{-k,-k+2,-k+4,\ldots,k\}$ and $s \in \{-t, -t+2,-t+4,\ldots,t\}$ define the set of (exponents of) monomials 
\begin{equation} \label{eq: partition kl}
I^{=k,=t}_{r,s} :=\left\{(\alpha,\alpha',\beta,\beta') \in (\N^d)^4: |\alpha+\alpha'|=k, \ |\alpha|-|\alpha'|=r, \ |\beta+\beta'| = t, \ |\beta|-|\beta'|=s \right\}. 
\end{equation}
Note that we restrict our attention to $r \equiv k \bmod 2$ and $s \equiv t \bmod 2$. If $r,s$ do not satisfy these  conditions  then $I^{=k,=t}_{r,s}=\emptyset$.
  We then have the following semidefinite representation of $\set_{k,t}$.

\begin{proposition}\label{propRkl}
For $k,t \in \N$ we have 
\begin{equation} \label{eq: dpskl SDP}
\begin{split}
	\set_{k,t} = \big\{ \rho \in \calH^d\otimes \calH^d :\ &\exists L:\C[\bx, \qo \bx, \by, \qo \by]_{2k,2t} \to \C \text{ Hermitian s.t.  }\\
	& L(\bx\bx^*\otimes \by\by^*)=\rho,\\
	& L = 0 \text{ on } \mathcal I(1-\|\bx\|^2,1-\|\by\|^2 )_{2k,2t},\\
	& M_{k,\ell}(L)[I^{=k,=t}_{r,s}] \succeq 0 \text{ for all } r \in [-k,k], s\in [-t,t] 
 \big\}.
\end{split}
\end{equation}
\end{proposition}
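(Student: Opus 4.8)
The plan is to reduce both sides of \eqref{eq: dpskl SDP} to one and the same description, by first restricting the certifying functional $L$ to satisfy the sign-symmetry condition \eqref{eqL02}, and then invoking \cref{prop: restrict to hom} together with a block-diagonalization of the moment matrix $M_{k,t}(L)$ (the moment matrix indexed by the monomials of degree $k$ in $\bx,\bbx$ and degree $t$ in $\by,\bby$). Let $\mathcal T$ denote the set on the right-hand side of \eqref{eq: dpskl SDP}; the goal is to show $\set_{k,t}=\mathcal T$.

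First I would argue that in the definition of both $\set_{k,t}$ and $\mathcal T$ one may assume without loss of generality that the certificate $L$ satisfies \eqref{eqL02}. This follows by averaging $L$ over the Haar measure of the circle group action of $\mathbb T\times\mathbb T$ described in \cref{sec: block diagonalization}: the averaged functional $\tilde L$ annihilates every monomial $\bx^\alpha\bbx^{\alpha'}\by^\beta\bby^{\beta'}$ with $|\alpha|\ne|\alpha'|$ or $|\beta|\ne|\beta'|$, and agrees with $L$ on the remaining monomials. One then checks that each defining constraint is preserved: the moment constraint $L(\bx\bx^*\otimes\by\by^*)=\rho$ only involves monomials with $|\alpha|=|\alpha'|=|\beta|=|\beta'|=1$; the generators $1-\|\bx\|^2$ and $1-\|\by\|^2$ of the ideal, as well as the cones $\Sigma_{2k,2t}$ and $\Sigma_{=2k,=2t}$, are invariant under the action, so vanishing on the ideal and positivity on these cones pass to $\tilde L$.

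The key point I would stress, and the step where one must be careful, is that averaging also leaves each block $M_{k,t}(L)[I^{=k,=t}_{r,s}]$ unchanged. Indeed, for two indices $u=(\alpha,\alpha',\beta,\beta')$ and $u'=(\gamma,\gamma',\delta,\delta')$ lying in the same block $I^{=k,=t}_{r,s}$, the conditions $|\alpha+\alpha'|=|\gamma+\gamma'|=k$, $|\alpha|-|\alpha'|=|\gamma|-|\gamma'|=r$ force $|\alpha|=|\gamma|=(k+r)/2$ and $|\alpha'|=|\gamma'|=(k-r)/2$ (recall $r\equiv k\bmod 2$), and analogously in the $\by$-variables; hence the monomial $m_u\overline{m_{u'}}=\bx^{\alpha+\gamma'}\bbx^{\alpha'+\gamma}\by^{\beta+\delta'}\bby^{\beta'+\delta}$ satisfies $|\alpha+\gamma'|=|\alpha'+\gamma|=k$ and $|\beta+\delta'|=|\beta'+\delta|=t$. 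These are exactly the moments fixed by the averaging, so the block conditions $M_{k,t}(L)[I^{=k,=t}_{r,s}]\succeq0$ hold for $L$ if and only if they hold for $\tilde L$. Thus both membership descriptions are unaffected by replacing $L$ with $\tilde L$, and we may assume \eqref{eqL02} throughout; the remaining steps are then routine.

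Once $L$ satisfies \eqref{eqL02}, I would finish in two steps. By \eqref{eqL02} the restriction of $M_{k,t}(L)$ to the bidegree-$(k,t)$ homogeneous index set is block-diagonal with blocks $M_{k,t}(L)[I^{=k,=t}_{r,s}]$; hence, writing a homogeneous $p\in\C[\bx,\bbx,\by,\bby]_{=k,=t}$ through its coefficient vectors $\vec c^{\,(r,s)}$ on each block, formula \eqref{eq_pos_herm} gives $L(p\overline p)=\sum_{r,s}(\vec c^{\,(r,s)})^*M_{k,t}(L)[I^{=k,=t}_{r,s}]\,\vec c^{\,(r,s)}$. Choosing $p$ supported on a single block shows that $L\ge0$ on $\Sigma_{=2k,=2t}$ is equivalent to $M_{k,t}(L)[I^{=k,=t}_{r,s}]\succeq0$ for all $r\in[-k,k]$, $s\in[-t,t]$. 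Finally, \cref{prop: restrict to hom} gives, under \eqref{eqL02} and vanishing on the bi-sphere ideal, the equivalence of $L\ge0$ on $\Sigma_{2k,2t}$ and $L\ge0$ on $\Sigma_{=2k,=2t}$. Chaining these equivalences yields $\set_{k,t}=\mathcal T$, which is the claim.
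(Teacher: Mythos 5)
Your proof is correct and follows essentially the same route as the paper's: restrict to functionals satisfying \eqref{eqL02} via the $\mathbb T\times\mathbb T$ averaging argument, invoke \cref{prop: restrict to hom} to pass from $\Sigma_{2k,2t}$ to $\Sigma_{=2k,=2t}$, and block-diagonalize $M_{k,t}(L)$ along the sets $I^{=k,=t}_{r,s}$ exactly as the paper does (your degree computation $|\alpha+\gamma'|=|\alpha'+\gamma|=k$ matches the paper's $r-r'$, $s-s'$ calculation). If anything, you are slightly more careful than the paper on the inclusion of the right-hand set into $\set_{k,t}$, by verifying explicitly that the averaging fixes the entries of each block $M_{k,t}(L)[I^{=k,=t}_{r,s}]$, a point the paper leaves implicit.
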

\begin{proof}
As mentioned above, we may add the constraint \eqref{eqL02} to the program  \eqref{eq: dps kl}. It then follows from \cref{prop: restrict to hom} that we may replace the condition $L \geq 0$ on $\Sigma_{2k,2t}$ with $L \geq 0$ on $\Sigma_{=2k,=2t}$. Finally we observe that the index sets $I^{=k,=t}_{r,s}$ block-diagonalize $M_{=k,=t}(L)$. Indeed, let $p \in \C[\bx,\bbx,\by,\bby]_{=k,=t}$ and write $p = \sum_{r,s} p_{r,s}$ where $p_{r,s}$ is the polynomial corresponding to the terms of $p$ with exponents in $I^{=k,=t}_{r,s}$. Then $p_{r,s} \ol p_{r',s'}$ is a linear combination of monomials of the form 
\[
\bx^{\alpha}\bbx^{\alpha'} \by^{\beta} \bby^{\beta'} \bbx^{\gamma}\bx^{\gamma'} \bby^{\delta} \by^{\delta'},
\]
where we have the following for the degrees in $\bx,\bbx$. By assumption $|\alpha|-|\alpha'|=r$ and $|\gamma|-|\gamma'|=r'$, and therefore the degree in $\bx$ minus the degree in $\bbx$ is $(|\alpha| +|\gamma'|) - (|\alpha'|+|\gamma|) = r-r'$. Similarly, the degree in $\by$ minus the degree in $\bby$ equals $s-s'$. Hence, if $(r,s) \neq (r',s')$, then \cref{eqL02} shows that $L(p_{r,s} \ol p_{r',s'})=0$.
\end{proof}

Finally, we observe the following alternative formulation of the positivity conditions in \cref{eq: dpskl SDP} in terms of the noncommutative moment matrices (cf.~\cref{eq: Mnc}):   
\begin{equation} \label{eq: ppt on L}
M_{k,t}(L)[I^{=k,=t}_{r,s}] \succeq 0
\Longleftrightarrow L\big( (\bx\bx^*)^{\otimes (k+r)/2} \otimes (\bbx\, \bbx^*)^{\otimes (k-r)/2} \otimes (\by\by^*)^{\otimes (t+s)/2} \otimes (\bby \, \bby^*)^{\otimes (t-s)/2}\big) \succeq 0.
\end{equation}
Although less efficient, this  reformulation will permit to connect the program \eqref{eq: dpskl SDP} to the original formulation of the DPS hierarchy $\dps_{1,t}$ (see the proof of Proposition \ref{prop: equality}).

The analog of \cref{lem: convergence set t} holds  for the sets  $\set_{k,t}$:
$$\bigcap_{k,t \ge 1}\set_{k,t}= \bigcap_{t\ge 2} \set_{t,t} =\SEP_d;$$  the argument is similar, based on  standard tools from polynomial optimization (\cref{theomainTchakaloff}).
In fact,   
even the (weaker) sets $\set_{1,t}$ already converge to $\SEP$, i.e.,  we have
\begin{equation}\label{eqR1t}
\bigcap_{t\ge 2} \set_{1,t}=\SEP_d;
\end{equation}
 in other words, in the moment approach it is sufficient to let only the degree in $\by,\bby$ grow. We will show this  in \cref{theoR1t} below, using 
 the tools about matrix-valued polynomial optimization (\cref{theomainmatrix}).

\subsection{Convergence of the sets $\set_{1,t}$ to $\SEP$}\label{sec:convergenceR1t}

We first reformulate the set $\set_{1,t}$ from \cref{eq: dps kl,eq: dpskl SDP} (case $k=1$) in terms of matrix-valued linear functionals $\cL$ on the polynomial space $\C[\by,\bby]$.

\begin{lemma}
For $t \in \N$ we have 
\begin{equation} \label{eq: dps1l SDP}
\begin{split}
	\set_{1,t} = \big\{ \rho \in \calH^d\otimes \calH^d :\ &\exists \cL:\C[\by, \bby]_{2t} \to \H^d \text{ Hermitian s.t.  }\\
	& \cL(\by\by^*)=\rho,\\
	& \cL = 0 \text{ on } \mathcal I(1-\|\by\|^2 )_{2t},\\
	& M_{t}(\cL) \succeq 0 \big\}.
\end{split}
\end{equation}
\end{lemma}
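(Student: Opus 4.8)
The plan is to set up an explicit dictionary between the scalar linear functional $L$ in the definition \eqref{eq: dps kl} of $\set_{1,t}$ (the case $k=1$, acting on $\C[\bx,\bbx,\by,\bby]_{2,2t}$) and the matrix-valued functional $\cL$ of \eqref{eq: dps1l SDP} (acting on $\C[\by,\bby]_{2t}$ with values in $\H^d$), and then to check that this dictionary carries each of the four defining constraints of one formulation into the corresponding constraint of the other. The bridge I would use is
\[
\cL(q)_{ij} = L(x_i\overline{x_j}\, q), \qquad i,j\in[d],\ q\in\C[\by,\bby]_{2t},
\]
read from $L$ to $\cL$ in one direction and, together with the supplementary rules $L(q)=\Tr\,\cL(q)$ for $q\in\C[\by,\bby]_{2t}$ and $L=0$ on all $\bx$-monomials of odd total degree, read from $\cL$ to $L$ in the other.

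First I would invoke the circle-group reduction of \cref{sec: block diagonalization}: all constraints defining $\set_{1,t}$ are invariant under $(w_x,w_y)\cdot(x,y)=(w_x x,w_y y)$, so I may assume the witness $L$ satisfies the sign-symmetry condition \eqref{eqL02} and in particular vanishes on every monomial whose $\bx$- and $\bbx$-degrees differ. For such an $L$, setting $\cL(q)_{ij}=L(x_i\overline{x_j}q)$, Hermiticity of $\cL$ follows from Hermiticity of $L$, the identity $\cL(\by\by^*)=\rho$ holds because the entries of $\cL(\by\by^*)$ are the numbers $L(x_i\overline{x_j}y_k\overline{y_l})$, i.e.\ the entries of $L(\bx\bx^*\otimes\by\by^*)=\rho$, and $\cL$ vanishes on $\mathcal I(1-\|\by\|^2)_{2t}$ because $x_i\overline{x_j}(1-\|\by\|^2)r$ lies in the bi-sphere ideal. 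Conversely, given a feasible $\cL$ I define $L$ by the three rules above; here the only delicate point is the $\bx$-sphere ideal, which is forced by the definition $L(q)=\Tr\,\cL(q)$, since $L((1-\|\bx\|^2)q)=\Tr\,\cL(q)-\sum_i\cL(q)_{ii}=0$.

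The main step is the equivalence of the positivity constraints: $L\ge 0$ on $\Sigma_{2,2t}$ if and only if $M_t(\cL)\succeq 0$. I would write a generic $p\in\C[\bx,\bbx,\by,\bby]_{1,t}$ as $p=p_0+\sum_i x_ip_i+\sum_j\overline{x_j}q_j$ with $p_0,p_i,q_j\in\C[\by,\bby]_t$, expand $p\overline p$, and apply $L$; sign symmetry annihilates every term whose $\bx$- and $\bbx$-degrees differ, leaving
\[
L(p\overline p)=\Tr\,\cL(p_0\overline{p_0})+\langle\cL,\avec{p}\,\avec{p}^{\,*}\rangle+\langle\cL,\overline{\avec{q}}\,\overline{\avec{q}}^{\,*}\rangle,
\]
where $\avec{p}=(p_1,\dots,p_d)$, $\overline{\avec{q}}=(\overline{q_1},\dots,\overline{q_d})$, and the scalar term is reached through the $\bx$-sphere ideal. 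By \cref{lemMGLpos} (condition \eqref{eq: M(ML) psd}), $M_t(\cL)\succeq 0$ makes each of the three summands nonnegative, so $L\ge0$ on $\Sigma_{2,2t}$. For the reverse implication, taking $p=\sum_i x_ir_i$ for arbitrary $\avec{r}\in\C[\by,\bby]_t^d$ gives $L(p\overline p)=\langle\cL,\avec{r}\,\avec{r}^{\,*}\rangle$, so positivity of $L$ on $\Sigma_{2,2t}$ forces $\langle\cL,\avec{r}\,\avec{r}^{\,*}\rangle\ge0$ for all $\avec{r}$, which by \cref{lemMGLpos} is exactly $M_t(\cL)\succeq0$.

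I expect the harder part to be this positivity equivalence, and within it the treatment of the constant-in-$\bx$ component $p_0$: it is the one place where the $\bx$-sphere-ideal constraint genuinely enters (through $L(p_0\overline{p_0})=\Tr\,\cL(p_0\overline{p_0})$), and one must verify that the degree-one-in-$\bx$ cross terms vanish by sign symmetry rather than being silently discarded. The remaining checks (Hermiticity, the moment identity $\cL(\by\by^*)=\rho$, and the matching of the two ideal constraints) are routine bookkeeping, once the tensor-ordering convention identifying $\H^d\otimes\H^d$ with the block structure of $\cL(\by\by^*)$ is fixed.
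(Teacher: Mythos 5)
Your proposal is correct and takes essentially the same route as the paper's own proof: the same dictionary $\cL(q)_{ij}=L(x_i\overline{x_j}\,q)$ in one direction, the same converse construction (trace on pure $\by,\bby$-polynomials, vanishing elsewhere), the same decomposition of a test polynomial by its degree in $\bx,\bbx$, and the same appeal to \cref{lemMGLpos} to convert $M_t(\cL)\succeq 0$ into nonnegativity of $\langle\cL,\avec{p}\,\avec{p}^*\rangle$. One small imprecision: your supplementary rule ``$L=0$ on all $\bx$-monomials of odd total degree'' leaves $L$ undefined on monomials of $(\bx,\bbx)$-bidegree $(2,0)$ and $(0,2)$, which are precisely the cross terms between $\sum_i x_ip_i$ and $\sum_j\overline{x_j}q_j$ in your expansion of $L(p\overline p)$; the rule should be, as in the paper, $L(\bx^\alpha\bbx^{\alpha'}q)=0$ whenever $(|\alpha|,|\alpha'|)\notin\{(0,0),(1,1)\}$ --- your own phrase ``annihilates every term whose $\bx$- and $\bbx$-degrees differ'' already invokes this stronger vanishing implicitly, so the fix is cosmetic rather than structural.
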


\begin{proof}
Let us use $\widehat \set_{1,t}$ to denote the set defined in \eqref{eq: dps1l SDP}. We show that $\set_{1,t} = \widehat \set_{1,t}$ using the formulation of $\set_{1,t}$ given in \cref{eq: dps kl}.

First consider $\rho \in \set_{1,t}$ and let $L:\C[\bx, \bbx, \by, \bby]_{2,2t} \to \C$ be an associated certificate. Define $\cL:\C[\by,\bby]_{2t} \to \H^d$ by 
\[
\cL(p) = L(\bx \bx^* p) = \big(L(x_i \overline x_j p \big)_{i,j=1}^d  \qquad \text{ for all } p \in \C[\by,\bby]_{2t}.
\]
So $\calL=(L_{ij})_{i,j=1}^d$ with $L_{ij}(p)=L(x_i\ol x_j p)$.
By construction $\cL(\by\by^*)=\rho$. To see that $\cL = 0$ on $\mathcal I(1-\|\by\|^2)_{2t}$, it suffices to observe that, for any $p \in \mathcal I(1-\|\by\|^2)_{2t}$ and any $i,j \in [d]$, the polynomial $x_i \overline x_j p$ lies in $\mathcal I(1-\|\bx\|^2,1-\|\by\|^2)_{2,2t}$. 
To show that $M_t(\cL) \succeq 0$ we use  (the degree truncated version of) \cref{lemMGLpos}. That is, we use that $M_t(\cL) \succeq 0$ is equivalent to $\sum_{i,j \in [d]} L_{ij}(p_i \overline p_j) \geq 0$ for all $ (p_1,\ldots, p_d) \in (\C[\by,\bby]_t)^d$. We have $\sum_{i,j \in [d]} L_{ij}(p_i \overline p_j) = L((\sum_i x_i p_i)(\sum_i x_i p_i)^*) \geq 0$, where the last inequality follows from the fact that $(\sum_i x_i p_i)(\sum_i x_i p_i)^* \in \Sigma_{2,2t}$. This shows that if $\rho \in \set_{1,t}$, then $\rho \in \widehat \set_{1,t}$. 

Conversely, let $\rho \in \widehat \set_{1,t}$ and let $\cL:\C[\by,\bby]_{2t} \to \H^d$ be an associated certificate. We write $\cL(p) = \big(L_{ij}(p)\big)_{i,j \in [d]}$ with $L_{ij} \in \C[\by,\bby]_{2t}^*$ for all $i,j \in [d]$. We define a linear functional $L$ on $\C[\bx,\bbx,\by,\bby]_{2,2t}$ as follows. For a polynomial $p \in \C[\by,\bby]_{2t}$ we set $L(p) = \sum_{i\in[d]} L_{ii}(p)$ and, for each $i,j \in [d]$, we set $L(x_i \overline x_j p) = L_{ij}(p)$. We extend $L$ to $\C[\bx,\bbx,\by,\bby]_{2,2t}$ by setting $L(x^{\alpha} \overline x^{\beta} p) = 0$ for all $\alpha, \beta \in \N^d$ with $(|\alpha|,|\beta|) \not \in \{(0,0), (1,1)\}$, and then extending by linearity. We show that $L$ is a certificate for $\rho \in \set_{1,t}$. First observe that $L(\bx \bx^* \otimes \by \by^*) = \big(L(x_i \overline x_j \by \by^*)\big)_{i,j\in [d]} = \cL(\by \by^*) = \rho$. By construction we have $L((1-\sum_{i} x_i \overline x_i) p)=0$ for all $p \in \C[\by,\bby]_{2t}$. Moreover, by assumption, $\cL((1-\sum_i y_i \overline y_i) p)=0$ for all $p \in \C[\by,\bby]_{2t-2}$. Using the construction of $L$, this implies that $L((1-\sum_i y_i \overline y_i) p) = 0$ for all $p \in \C[\bx,\bbx,\by,\bby]_{2,2t-2}$. Together, this shows that $L=0$ on $\mathcal I(1-\|\bx\|^2,1-\|\by\|^2)_{2,2t}$.

 It remains to show that $L \geq 0$ on $\Sigma_{2,2t}$. Let $p \in \C[\bx,\bbx,\by,\bby]_{1,t}$, we show that $L(p\ol p)\ge 0$. 
 For this, write $p = p_0 + p_1 + p_2$, where $p_0$ has degree $0$ in $\bx,\bbx$, $p_1$ has degree $(1,0)$  in $(\bx,\bbx)$, and $p_2$ has degree  $(0,1)$ in $(\bx,\bbx)$. By definition, $L(p_a \ol p_b)=0$ if $a \neq b$ and thus $L(p\ol p)=L(p_0\ol p_0)+L(p_1\ol p_1)+L(p_2\ol p_2)$. We have $L(p_0 \ol p_0) = \sum_{i=1}^d L_{ii}(p_0\ol p_0)\ge 0$, since $M_t(L_{ii})\succeq 0$ for each $i\in [d]$ as $M_t(\calL)\succeq 0$. Next we show that $L(p_1 \ol p_1) \geq 0$. To do so, write $p_1 = \sum_{i=1}^d x_i q_i$ where $q_i \in \C[\by,\bby]_{t}$ for $i \in [d]$. It then follows that $L(p_1 \ol p_1) = \sum_{i,j=1} ^d L(x_i \overline x_j q_i \ol q_j) = \sum_{i,j=1}^d L_{ij}(q_i\ol q_j)=\langle \cL, \vec {q} \vec{q}^*\rangle \geq 0$ where $\vec{q} = (q_1,\ldots, q_d) \in (\C[\by,\bby]_{t})^d$ and the last inequality follows from $M_{t}(\cL) \succeq 0$ (using~\cref{lemMGLpos}). This also directly implies that $L(p_2\ol p_2)\ge 0$. It follows that $L \geq 0$ on $\Sigma_{2,2t}$ and thus $\rho \in \set_{1,t}$. 
\end{proof}

We can now show the convergence of the sets $\calR_{1,t}$ to $\SEP$. The proof is analogous to that of \cref{lem: convergence set t}, except that it now relies on the results for matrix-valued polynomial optimization (\cref{theomainmatrix,theoTchakmatrix}).

\begin{theorem}\label{theoR1t}
We have $\SEP_d = \bigcap_{t \geq 2} \set_{1,t}$.
\end{theorem}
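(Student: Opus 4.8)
The plan is to prove the two inclusions separately, following the scheme of \cref{lem: convergence set t} but in the matrix-valued setting, and using the reformulation of $\set_{1,t}$ in \cref{eq: dps1l SDP}. For the easy inclusion $\SEP_d\subseteq\bigcap_{t\ge 2}\set_{1,t}$, I would take a decomposition $\rho=\sum_{\ell}\lambda_\ell\, a_\ell a_\ell^*\otimes b_\ell b_\ell^*$ with $\lambda_\ell>0$ and $\|b_\ell\|=1$, and define the Hermitian matrix-valued map $\cL:\C[\by,\bby]\to\H^d$ by $\cL(p)=\sum_\ell \lambda_\ell\, p(b_\ell)\, a_\ell a_\ell^*$. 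Then $\cL(\by\by^*)=\sum_\ell\lambda_\ell\, a_\ell a_\ell^*\otimes b_\ell b_\ell^*=\rho$, and $\cL$ vanishes on $\mathcal{I}(1-\|\by\|^2)$ since $\|b_\ell\|=1$. Moreover $M_t(\cL)=\sum_\ell \lambda_\ell\,(a_\ell a_\ell^*)\otimes\big([b_\ell,\overline{b_\ell}]_t[b_\ell,\overline{b_\ell}]_t^*\big)\succeq 0$ as a conic combination of tensor products of positive semidefinite matrices, so $\rho\in\set_{1,t}$ for every $t$.

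For the hard inclusion $\bigcap_{t\ge 2}\set_{1,t}\subseteq\SEP_d$, I would fix $\rho$ in the intersection with certificates $\cL_t:\C[\by,\bby]_{2t}\to\H^d$ and first extract a limit functional $\cL:\C[\by,\bby]\to\H^d$. The key point is uniform boundedness. Consider the scalar functional $\Tr\circ\cL_t=\sum_{i}(\cL_t)_{ii}$: since $M_t(\cL_t)\succeq 0$ its moment matrix $\sum_i M_t((\cL_t)_{ii})$ is positive semidefinite, so $\Tr\circ\cL_t$ is positive, and it vanishes on $\mathcal{I}(1-\|\by\|^2)$ because each entry of $\cL_t$ does. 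With $S=\{1-\|\by\|^2,\ \|\by\|^2-1\}$ the module $\calM(S)$ is Archimedean (the certificate $1-\sum_i y_i\overline{y_i}$ already lies in $\calM(S)_2$), and $(\Tr\circ\cL_t)(1)=\Tr(\rho)$ is fixed, since $\cL_t(1)=\cL_t(\|\by\|^2)$ equals the partial trace of $\rho$ over the second system independently of $t$. Hence \cref{pointconv} bounds $(\Tr\circ\cL_t)(w)$ by $R^{|w|/2}\Tr(\rho)$ on every monomial $w$; combined with the Cauchy–Schwarz bound on the off-diagonal entries of the positive semidefinite block matrix $M_t(\cL_t)$, this bounds every entry $(\cL_t)_{ij}(w)$ uniformly in $t$, so a diagonal argument yields a pointwise convergent subsequence $\cL_t\to\cL$.

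The limit $\cL$ then satisfies $\cL(\by\by^*)=\rho$, vanishes on $\mathcal{I}(1-\|\by\|^2)$, and has $M(\cL)\succeq 0$. Since $\cL$ vanishes on the ideal we get $\cL\big((\pm(1-\|\by\|^2))p\overline p\big)=0$, while $M(\cL)\succeq 0$ gives $\cL(p\overline p)\succeq 0$ by \cref{lemMGLtoGL,lemMGLpos}; together these show $\cL$ is nonnegative on $\calM(S)$, i.e.\ condition (ii) of \cref{theomainmatrix} holds. Applying \cref{theomainmatrix} produces an $\H^d_+$-valued representing measure for $\cL$ supported on the unit sphere $\sD(S)$, and \cref{theoTchakmatrix} (with $k=2$) replaces it by a finite atomic one: $\cL(p)=\sum_{\ell=1}^K\Lambda_\ell\, p(b_\ell)$ on $\C[\by,\bby]_2$ with $\Lambda_\ell\in\H^d_+$ and $\|b_\ell\|=1$. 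Evaluating at $\by\by^*$ gives $\rho=\sum_\ell \Lambda_\ell\otimes b_\ell b_\ell^*$, and writing each $\Lambda_\ell=\sum_m a_{\ell m}a_{\ell m}^*$ yields a separable decomposition of $\rho$, so $\rho\in\SEP_d$.

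I expect the main obstacle to be the uniform boundedness needed for the limit extraction, because \cref{pointconv} is stated only for scalar functionals. Passing to the trace functional $\Tr\circ\cL_t$ to get the scalar bound, and then using positive semidefiniteness of the block moment matrix $M_t(\cL_t)$ to control the off-diagonal block entries, is the technical heart of the argument; once the limit $\cL$ exists, the remaining steps are a routine application of the matrix-valued moment results \cref{theomainmatrix,theoTchakmatrix}.
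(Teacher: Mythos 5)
Your proposal is correct and takes essentially the same route as the paper's proof: extract a pointwise-convergent subsequence of the certificates $\cL_t$ via \cref{pointconv}, then apply the matrix-valued moment results \cref{theomainmatrix,theoTchakmatrix} to get an atomic representation $\cL(p)=\sum_\ell \Lambda_\ell\, p(v_\ell)$ with $\Lambda_\ell\in\H^d_+$ and $\|v_\ell\|=1$, whence $\rho=\sum_\ell \Lambda_\ell\otimes v_\ell v_\ell^*\in\SEP_d$. The only difference is that you spell out steps the paper leaves implicit --- the easy inclusion $\SEP_d\subseteq\set_{1,t}$, and the reduction of the boundedness issue to the scalar \cref{pointconv} via the trace functional combined with Cauchy--Schwarz on the positive semidefinite block moment matrix $M_t(\cL_t)$ --- and these fillers are correct.
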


\begin{proof}
Assume $\rho \in  \bigcap_{t \geq 2} \set_{1,t}$ and for each $t \geq 2$ let $\cL_t$ be a corresponding certificate for membership in $\calR_{1,t}$. Using \cref{pointconv} one can show that the sequence $(\cL_t)_t$ has a pointwise converging subsequence. Let $\cL$ be its limit. It then follows from \cref{theomainmatrix,theoTchakmatrix} that there exists a $K \in \N$, matrices $\Lambda_1,\ldots, \Lambda_K \in \mathcal S_+^d$, and vectors $v_1,\ldots, v_K \in \C^d$ with $\|v_i\|=1$ such that 
\[
\cL(p) = \sum_{k=1}^K \Lambda_k p(v_k) \qquad \text{ for all } p \in \C[\by,\bby]_2.
\]
In particular, 
\[
\rho = \cL(\by \by^*) = \sum_{k=1}^K \Lambda_k \otimes v_k v_k^*,
\]
which shows that $\rho \in \SEP_d$. 
\end{proof}

\subsection{The Doherty-Parrilo-Spedalieri hierarchy: state extension perspective} \label{sec: state extension perspective}

In the previous section we  introduced the sets  $\set_{k,t}$ for  integers $k,t \ge 1$ and we mentioned that there are two regimes of interest: $k=1$ and $k=t$,  leading to the two hierarchies $\set_{1,t}$ and $\set_{t,t}$.  We now show that these hierarchies   in fact coincide with the DPS hierarchies, denoted here as  $\dps_{1,t}$ and $\dps_{t,t}$, that are defined in terms of (one-sided and two-sided) state extensions \cite{DPS04}.
 For ease of notation, we will mostly focus on the first regime and show the equality $\set_{1,t}=\dps_{1,t}$; the arguments naturally adapt to the second regime to show $\set_{t,t}=\dps_{t,t}$. This permits to recover the convergence of the DPS hierarchy to $\sep$ from the corresponding convergence result for the sets $\set_{1,t}$ (Theorem \ref{theoR1t}) obtained via the moment approach.

We begin with  giving the original formulation of the DPS hierarchy $\dps_{1,t}$ in terms of (one-sided) state extensions. To do so, we require a few definitions. 

Given a bipartite state $\rho\in \calH^d\otimes \calH^d$ it is convenient to denote the two vector spaces (aka registers) composing the tensor product space on which $\rho$ acts as $A$ and $B$. Then we may also denote $\rho$ as $\rho_{AB}$. The \emph{partial trace} of $\rho_{AB}$ with respect to the second register is the operator $\rho_A=\Tr_B(\rho)$ that acts on the first register and is defined by tracing out the second register. In the same way, $\rho_B=\Tr_A(\rho)$ is the second partial trace, which acts on the second register and is obtained by tracing out the first one.
Concretely, say $\rho=(\rho_{ij,i'j'})_{i,j,i',j'\in [d]}$ after fixing a basis of $\C^d\otimes\C^d$. Then we have
$$\rho_A=\Tr_B(\rho)=\Big(\sum_{j=1}^d \rho_{ij,i'j}\Big)_{i,i'=1}^d\quad \text{ and } \quad 
\rho_B=\Tr_A(\rho)=\Big(\sum_{i=1}^d\rho_{ij,ij'}\Big)_{j,j'=1}^d.
$$
{The {\em partial transpose} $\rho^{T_B}$ of $\rho$ with respect to the second register $B$ is 
defined by 
\begin{equation}\label{eqPPTB}
(\rho^{T_B})_{ij,i'j'}= \rho_{ij',i'j} \ \text{ for all } i,i'\in [d],\ j,j'\in [d]
\end{equation}
and  $\mathsf{T_B}$ denotes the corresponding transpose operator that acts on $\mathcal H^d\otimes \mathcal H^d$ by taking the partial transpose on the second register, so that  $\mathsf{T_B}(\rho)=\rho^{T_B}$.
 The partial transpose $\rho^{T_A}$ with respect to the first register is  defined analogously by
$(\rho^{T_A})_{ij,i'j'}=\rho_{i'j,ij'}$ for all $i,i',j,j'\in [d]$. Note that 
$\rho^{T_A}= (\rho^{T_B})^T= \overline {\rho^{T_B}}$ if $\rho$ is Hermitian, and thus $\rho^{T_B}\succeq 0$ implies $\rho^{T_A}\succeq 0$.}

Given an integer $t\ge 2$ the construction of the  relaxation $\dps_{1,t}$  relies on the following observation: If $\rho_{AB}$ has a decomposition as in (\ref{decrho}) then one may introduce $t$ copies of  the second register and define the following extended state $\rho_{A B_{[t]} }$ acting on $\C^d\otimes (\C^d)^{\otimes t}$:
\begin{equation}\label{decrhot}
\rho_{A B_{[t]} }:=  \sum_{\ell=1}^r \lambda _l \ x_\ell x_\ell^*\otimes ( y_\ell y_\ell^*)^{\otimes t}.
\end{equation}
There is a natural action of the  symmetric group $\Sym(t)$ on $(\C^d)^{\otimes t}$, defined by $\sigma(v_1\otimes \ldots \otimes v_t)=v_{\sigma(1)}\otimes \ldots \otimes v_{\sigma(t)}$ for $v_1,\ldots,v_t\in\C^d$ and $\sigma\in \Sym(t)$, and extended to the space $(\C^d)^{\otimes t}$ by linearity. Let $\Sym((\C^d)^{\otimes t})$ denote the invariant subspace of $(\C^d)^{\otimes t}$ under this action and  let $\Pi_t$ denote the projection from $ (\C^d)^{\otimes t}$ onto its invariant subspace $\Sym((\C^d)^{\otimes t})$, defined by
$$\Pi_t(w) ={1\over t!}\sum_{\sigma\in \Sym(t)} \sigma(w) \quad \text{ for } w\in  (\C^d)^{\otimes t}.$$
Then, $I_A\otimes \Pi_t$ acts onto $\C^d\otimes  (\C^d)^{\otimes t}$.

We  now present some natural properties that the extended state $\rho_{A B_{[t]}}$ from \eqref{decrhot} satisfies:
\begin{description}
\item[(1)] $\rho_{A B_{[t]}}$ is positive semidefinite.
\item[(2)] 
$\rho_{AB} =\Tr_{B_{[2:t]}}(\rho_{A B_{[t]}})$, where, in $\Tr_{B_{[2:t]}}(\rho(A B_{[t]})$, we trace out the last $t-1$ copies of the second register $B$. 
\item[(3)] $(I_A\otimes \Pi_t)\rho_{A B_{[t]}} (I_A\otimes \Pi_t)=\rho_{AB_{[t]}}$, i.e., $\rho_{A B_{[t]}}$ is symmetric in the last $t$ registers.
\item[(4)]
$I_A\otimes \mathsf{T}_B^{\otimes s} \otimes I_B^{\otimes (t-s)} (\rho_{A B_{[t]}}) \succeq 0$ for any $1\le s\le t$. 
\end{description}
For property (2) we use the fact that each vector $y_\ell$ lies in the unit sphere and the
 last property (4) follows from the fact that $\mathsf{T}_B(yy^*)=(yy^*)^T=\overline y\, \overline y^*$ and thus
$$I_A\otimes \mathsf{T}_B^{\otimes s} \otimes I_B^{\otimes (t-s)} (\rho_{A B_{[t]}}) =
\sum_{\ell=1}^r \lambda_\ell x_\ell x_\ell^* \otimes (\overline y_\ell \ \overline y_\ell^*)^{\otimes s}\otimes (y_\ell y_\ell^*)^{\otimes (t-s)}\succeq 0
$$ if $\rho_{A B_{[t]}}$ satisfies (\ref{decrhot}). Property (4) is  known as the {\em positive partial transpose} (PPT) criterion or as the Peres-Horodecki criterion \cite{Horodecki_1996}.
{Clearly, in view of the symmetry property (3), taking the partial transpose of {\em any} $s$ copies (thus not only the first $s$ ones) among the $t$ copies of the second register preserves positivity.}
The above properties are used to define the  hierarchy $\dps_{1,t}$.

\begin{definition}\label{defDPS}
For an integer $t\ge 2$ the  DPS relaxation of order $t$ is defined as 
\begin{align}
\dps_{1,t} := \big\{ \rho_{AB}\in\calH^d\otimes \calH^d:\ &\exists \rho_{1,t} \text{ Hermitian linear map acting on } \C^d\otimes (\C^d)^{\otimes t} \text{ s.t. } \label{eq:positive} \\
&\mathrm{Tr}_{B_{[2:t]}}(\rho_{1,t}) = \rho_{AB}, \label{eq:partialtrace} \\
&(I_A \otimes \Pi_t) \rho_{1,t} (I_A \otimes \Pi_t) = \rho_{1,t}, \label{eq:symmetry}\\
&I_A \otimes \mathsf T_B^{\otimes s} \otimes I_B^{\otimes (t-s)}(\rho_{1,t}) \succeq 0 \text{ for all } s \in \{0\} \cup [t] \label{eq:PPT}  \big\}.
\end{align}
\end{definition}

\begin{remark}
In the definition of the set $\dps_{1,t}$ only one part of the system is extended, which is why we  refer to this as a {\em one-sided} state  extension. One can define a stronger relaxation of $\sep$ by considering a {\em two-sided} state extension. Given two integers $k,t \ge 2$ one can define $\dps_{k,t}$
as the set of states $\rho_{AB}$ that have  an extension $\rho_{k,t}$ acting on
$(\C^d)^{\otimes k}\otimes (\C^d)^{\otimes t}$, which satisfies the appropriate analogs of the above properties (1)-(4).
One may consider in particular the case $k=t$, leading to the sets $\dps_{t,t}$ that satisfy $$\SEP_d\subseteq \dps_{t,t}\subseteq \dps_{1,t}.$$
\end{remark}

Doherty, Parrilo and Spedaglieri \cite{DPS04} show that the  relaxations $\dps_{1,t}$ converge to $\sep$. 

\begin{theorem}[{\cite{DPS04}}]\label{theoDPS}
We have $\SEP_d\subseteq \dps_{1,t+1}\subseteq \dps_{1,t}$ and $\SEP_d=\bigcap_{t\ge 1} \dps_{1,t}$. As a consequence, we also have $\SEP_d=\bigcap_{t\ge 1}\dps_{t,t}$.
\end{theorem}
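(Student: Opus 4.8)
The plan is to split the statement into three pieces --- the direct inclusion $\SEP_d\subseteq\dps_{1,t}$, the monotonicity $\dps_{1,t+1}\subseteq\dps_{1,t}$, and the completeness $\SEP_d=\bigcap_{t\ge1}\dps_{1,t}$ --- and to treat the first two directly from the state-extension definition while deriving completeness from the moment machinery already developed. The two-sided statement will then follow by a sandwich argument.

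First I would establish the chain $\SEP_d\subseteq\dps_{1,t+1}\subseteq\dps_{1,t}$. The inclusion $\SEP_d\subseteq\dps_{1,t}$ is immediate for every $t\ge1$: for a separable $\rho=\sum_\ell\lambda_\ell a_\ell a_\ell^*\otimes b_\ell b_\ell^*$ the explicit extension $\rho_{AB_{[t]}}$ of \eqref{decrhot} is a certificate, since we already checked it satisfies properties (1)--(4) of \cref{defDPS}. For monotonicity, given $\rho\in\dps_{1,t+1}$ with certificate $\rho_{1,t+1}$ acting on $\C^d\otimes(\C^d)^{\otimes(t+1)}$, I would set $\rho_{1,t}:=\Tr_{B_{t+1}}(\rho_{1,t+1})$, tracing out the last copy of the second register, and verify that $\rho_{1,t}$ inherits (1)--(4). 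Positivity (1) is preserved under partial trace; condition (2) follows from $\Tr_{B_{[2:t]}}\circ\Tr_{B_{t+1}}=\Tr_{B_{[2:t+1]}}$; symmetry (3) is inherited because $\Sym(t)\subseteq\Sym(t+1)$ fixes the traced-out register, so the partial trace over a symmetric register stays invariant under permuting the remaining copies; and for PPT (4) I would use that the partial transpose on the first $s\le t$ copies commutes with the trace on copy $t+1$, giving $I_A\otimes\mathsf T_B^{\otimes s}\otimes I_B^{\otimes(t-s)}(\rho_{1,t})=\Tr_{B_{t+1}}\bigl(I_A\otimes\mathsf T_B^{\otimes s}\otimes I_B^{\otimes(t+1-s)}(\rho_{1,t+1})\bigr)\succeq0$ as a partial trace of a positive semidefinite matrix.

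For completeness I would invoke the moment reformulation rather than the quantum de Finetti theorem used in the original proof. By \cref{prop: equality} we have $\set_{1,t}=\dps_{1,t}$ for every $t$, and by \cref{theoR1t} we have $\SEP_d=\bigcap_{t\ge2}\set_{1,t}$; hence $\bigcap_{t\ge2}\dps_{1,t}=\SEP_d$. Combining with the direct inclusion $\SEP_d\subseteq\dps_{1,t}$ (all $t\ge1$) yields the sandwich $\SEP_d\subseteq\bigcap_{t\ge1}\dps_{1,t}\subseteq\bigcap_{t\ge2}\dps_{1,t}=\SEP_d$, whence equality.

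Finally, the two-sided statement $\SEP_d=\bigcap_{t\ge1}\dps_{t,t}$ follows by sandwiching with the nesting $\SEP_d\subseteq\dps_{t,t}\subseteq\dps_{1,t}$ recorded in the remark after \cref{defDPS}: intersecting over $t$ gives $\SEP_d\subseteq\bigcap_t\dps_{t,t}\subseteq\bigcap_t\dps_{1,t}=\SEP_d$. The only genuinely new ingredient is the completeness of the one-sided hierarchy, and I expect the real difficulty to lie not in the present argument but in the bridging result \cref{prop: equality}: matching the state-extension certificate with the matrix-valued moment functional $\cL$ of \eqref{eq: dps1l SDP}, where the symmetry and PPT conditions must be translated into positivity of the (noncommutative) moment matrix. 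Within the present theorem the one delicate verification is that all four extension properties --- especially PPT --- survive the partial trace in the monotonicity step.
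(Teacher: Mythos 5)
Your proposal is correct and takes essentially the paper's route: the paper likewise derives the theorem by combining \cref{prop: equality} ($\set_{1,t}=\dps_{1,t}$) with the moment-side completeness result \cref{theoR1t}, and obtains the two-sided claim from the sandwich $\SEP_d\subseteq\dps_{t,t}\subseteq\dps_{1,t}$. Your explicit partial-trace verification that properties (1)--(4) survive tracing out the last copy (giving $\dps_{1,t+1}\subseteq\dps_{1,t}$) is sound and merely fills in a detail the paper leaves implicit --- it can alternatively be obtained by restricting a moment certificate for $\set_{1,t+1}$ to $\C[\bx,\bbx,\by,\bby]_{2,2t}$.
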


We now    show  that equality $\set_{1,t} = \dps_{1,t}$ holds for all $t \in \N$.  Therefore, Theorem  \ref{theoDPS} follows directly from Theorem \ref{theoR1t}.
Using similar arguments one can also show that $\dps_{k,t}=\set_{k,t}$ and thus $\dps_{t,t}=\set_{t,t}$.

\begin{proposition} \label{prop: equality}
For any integer $t \geq 2$ we have $\set_{1,t} = \dps_{1,t}$. 
\end{proposition}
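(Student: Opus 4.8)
The plan is to use the semidefinite reformulation \eqref{eq: dpskl SDP} of $\set_{1,t}$ from Proposition \ref{propRkl} (case $k=1$), together with the noncommutative restatement \eqref{eq: ppt on L} of its positivity conditions, and to exhibit an explicit bijection between the scalar functionals $L$ certifying membership in $\set_{1,t}$ and the extended states $\rho_{1,t}$ certifying membership in $\dps_{1,t}$. The key observation is that a degree-$t$ homogeneous polynomial in $\by,\bby$ naturally lives on the symmetric subspace, so that the homogeneous degree-$(2,2t)$ moment data of $L$ encodes exactly a Hermitian operator on $\C^d\otimes\Sym((\C^d)^{\otimes t})$; and that the partial transpose acts on the $\by$-variables simply by conjugation, $\mathsf{T}_B(\by\by^*)=\bby\bby^*$, which is precisely what turns the blocks in \eqref{eq: ppt on L} into the PPT conditions \eqref{eq:PPT}.

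Concretely, given $L$ feasible for \eqref{eq: dpskl SDP} with $k=1$, I would set $\rho_{1,t}:=L(\bx\bx^*\otimes(\by\by^*)^{\otimes t})$, interpreted via the identification of $\Sym((\C^d)^{\otimes t})$ with degree-$t$ monomials in $\by$. Since the entries of $(\by\by^*)^{\otimes t}$ depend only on the multisets of their indices, the resulting operator is invariant under $I_A\otimes\Pi_t$, giving the symmetry property \eqref{eq:symmetry} for free; positivity \eqref{eq:positive} is then exactly the $r=1$, $s=t$ instance of \eqref{eq: ppt on L}. For the PPT conditions \eqref{eq:PPT}, I use $\mathsf{T}_B(\by\by^*)=\bby\bby^*$ and the symmetry of $\rho_{1,t}$ to write, for $a+b=t$,
\[
I_A\otimes \mathsf{T}_B^{\otimes b}\otimes I_B^{\otimes a}(\rho_{1,t}) = L\big(\bx\bx^*\otimes(\by\by^*)^{\otimes a}\otimes(\bby\bby^*)^{\otimes b}\big),
\]
which is precisely the matrix appearing in \eqref{eq: ppt on L} for $r=1$, $s=a-b$, hence positive semidefinite by feasibility of $L$. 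Conversely, given $\rho_{1,t}$ feasible for $\dps_{1,t}$, I would define $L$ on homogeneous monomials of degree $(2,2t)$ by reading off the entries of $\rho_{1,t}$, extend it to $\C[\bx,\bbx,\by,\bby]_{2,2t}$ by imposing vanishing on the bi-sphere ideal together with the sign-symmetry relations \eqref{eqL0}, and verify that the four DPS conditions translate back into the constraints of \eqref{eq: dpskl SDP} (with the block positivity again supplied by the PPT conditions through \eqref{eq: ppt on L}).

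The remaining, and most delicate, step is matching the partial-trace condition \eqref{eq:partialtrace} with the moment constraint $L(\bx\bx^*\otimes\by\by^*)=\rho$. Here I would use that $L$ vanishes on $\mathcal I(1-\|\by\|^2)_{2,2t}$ to rewrite $L(\bx\bx^*\otimes\by\by^*)=L(\bx\bx^*\otimes\by\by^*\,\|\by\|^{2(t-1)})$, then expand $\|\by\|^{2(t-1)}=(\sum_k y_k\overline{y_k})^{t-1}$ and recognize the right-hand side, under the symmetric-subspace identification, as $\Tr_{B_{[2:t]}}(\rho_{1,t})$. The combinatorial bookkeeping of the multinomial coefficients arising both from the symmetrization map and from tracing out $t-1$ of the symmetric copies is where care is required, and I expect this normalization matching to be the main obstacle; by contrast, the positivity and symmetry correspondences follow transparently from \eqref{eq: ppt on L} and the definition of $\rho_{1,t}$. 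Finally, to recover $\SEP_d=\bigcap_{t\ge 1}\dps_{1,t}$ as a corollary, I would simply combine the equality $\set_{1,t}=\dps_{1,t}$ with the convergence of the moment hierarchy established in Theorem \ref{theoR1t}.
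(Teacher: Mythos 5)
Your proposal follows the paper's proof essentially step for step: the same certificate $\rho_{1,t}=L(\bx\bx^*\otimes(\by\by^*)^{\otimes t})$ in the forward direction, the same reconstruction of $L$ from the entries of $\rho_{1,t}$ via the sign-symmetry relations and multiplication by powers of $\|\bx\|^2,\|\by\|^2$ in the converse, and the same identification of the homogeneous blocks with the PPT conditions through \eqref{eq: ppt on L} (including the conjugated blocks, where one uses that the conjugate of a Hermitian positive semidefinite matrix is positive semidefinite). The combinatorial obstacle you anticipate for \eqref{eq:partialtrace} is illusory: the paper defines $\rho_{1,t}$ as an operator on the \emph{full} space $\C^d\otimes(\C^d)^{\otimes t}$ with entries $(\rho_{1,t})_{i\avec{j},i'\avec{j'}}=L(x_i\overline x_{i'}y_{j_1}\overline y_{j'_1}\cdots y_{j_t}\overline y_{j'_t})$ indexed by tuples $\avec{j},\avec{j'}\in[d]^t$ rather than by a normalized basis of $\Sym((\C^d)^{\otimes t})$, so tracing out the last $t-1$ registers yields directly $L(\bx\bx^*\otimes\by\by^*\,\|\by\|^{2(t-1)})=L(\bx\bx^*\otimes\by\by^*)=\rho_{AB}$ with no multinomial coefficients to track.
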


\begin{proof}

Assume first  $\rho_{AB}\in \set_{1,t}$, with certificate $L$ satisfying \cref{eq: dps kl} (with $k=1,\ell=t$). 
	We claim that  $\rho_{1,t}:=  L(\bx\bx^* \otimes (\by\by^*)^{\otimes t})$ is a certificate for membership of $\rho_{AB}$ in $ \dps_{1,t}$. Indeed, \cref{eq:partialtrace} holds since $\mathrm{Tr}_{B_{[2:t]}}(\rho_{1,t}) =L(\bx\bbx^*\otimes \by\bby^*)= \rho_{AB}$ 	follows using the bi-sphere ideal condition on $L$. 		The symmetry condition in \cref{eq:symmetry} holds for  $\rho_{1,t}$ since  $L$ acts on commutative polynomials, and  the   PPT condition in \cref{eq:PPT} 
		holds for  $\rho_{1,t}$ as a consequence of the positivity condition: $L\ge 0$ on $\Sigma_{2,2t}$.
		
Conversely, assume  that $\rho_{AB}\in \dps_{1,t}$, with state $\rho_{1,t}$ as certificate  satisfying (\ref{eq:positive})-(\ref{eq:PPT}). We construct a linear functional $L$ acting on $\C[\bx,\bbx,\by,\bby]_{2,2t}$ that certifies membership of $\rho_{AB}$ in $\set_{1,t}$, i.e., satisfies the program (\ref{eq: dps kl}) (with $k=1$).
In a first step we set
	\begin{equation}\label{eqrho1t}
	L(\bx\bx^* \otimes (\by\by^*)^{\otimes t}) := \rho_{1,t}.
	\end{equation}
	In other words we set
		$$L(x_i\overline x_{i'} y_{j_1}\overline y_{j'_1} \cdots y_{j_t}\overline y_{j'_t}):= (\rho_{1,t})_{i \avec{j}, i'\avec{j'}}$$ 
		for any $i,i'\in [d]$ and 
	$\avec{j}=(j_1,\ldots,j_t), \avec{j'}=(j'_1,\ldots,j'_t)\in [d]^t$.
Using the symmetry condition (\ref{eq:symmetry}),  it follows that this definition does not depend on the order of the variables $y_j$ (or $\overline y_j$).	

	Indeed, by \cref{eq:symmetry} we know that 
	\begin{equation*} \label{eq: perm invariance}
	(\rho_{1,t})_{i\avec{j}, i' \avec{j'}} = (\rho_{1,t})_{i\sigma(\avec{j}), i' \tau(\avec{j'})} \text{ for all } i,i' \in [d], \avec{j},\avec{j'} \in [d]^t
	\end{equation*}
	for all permutations $\sigma, \tau\in\Sym(t)$, where  $\sigma(\avec{j}) =  (j_{\sigma(1)}, j_{\sigma(2)},\ldots, j_{\sigma(t)})$ for  $\avec{j}=(j_1,\ldots,j_t)$. 
	which shows that 
	$$
	L(x_i\overline x_{i'} y_{j_1}\overline y_{j'_1} \cdots y_{j_t}\overline y_{j'_t})
	= 	L(x_i\overline x_{i'} y_{j_{\sigma(1)}}\overline y_{j'_{\tau(1)}} \cdots y_{j_{\sigma(t)}}\overline y_{j'_{\tau(t)}}).
	$$
	This shows that $\rho_{1,t}$ defines a linear functional $L$ acting on polynomials with degree $1$ in $\bx$, degree $1$ in $\bbx$, degree $t$ in $\by$, and degree $t$ in $\bby$. We now show how to extend this linear functional $L$ to $\C[\bx,\bbx,\by,\bby]_{2,2t}$ in such a way that it becomes a certificate for $\rho_{AB} \in \set_{1,t}$. 
	
	First we extend $L$ to all monomials $\bx^\alpha\bbx^{\alpha'}\by^\beta\bby^{\beta'}$ with degree at most 2 in $\bx,\bbx$ and degree at most $2t$ in $\by,\bby$. For this we set  
	\begin{equation}\label{eq:extend L}
	L(\bx^\alpha\bbx^{\alpha'}\by^\beta\bby^{\beta'}):= 0 \ \text{ if } |\alpha|\ne |\alpha'| \text{ or } |\beta|\ne |\beta'|.
	\end{equation}
Otherwise, $|\alpha+\alpha'|,|\beta+\beta'|$ are even and we set
\[
L(\bx^\alpha \bbx^{\alpha'} \by^{\beta} \bby^{\beta'}) := L(\|\bx\|^{2-|\alpha+\alpha'|} \|\by\|^{2t-|\beta+\beta'|} \bx^\alpha \bbx^{\alpha'} \by^{\beta} \bby^{\beta'}).
\]
By construction, $L$ is Hermitian (since $\rho_{1,t}$ is Hermitian) and $L$ vanishes on  $\mathcal I(1-\|\bx\|^2,1-\|\by\|^2)_{2,2t}$.

It remains to show that $L\ge 0$ on $\Sigma_{2,2t}$.  In view of Lemma \ref{prop: restrict to hom}, it suffices to show that  $L \geq 0$ on $\Sigma_{=2,=2t}$, or, equivalently, that the moment matrix $M_{=1,=t}(L)$, indexed by monomials $\bx^\alpha\bbx^{\alpha'}\by^\beta\bby^{\beta'}$ with  $|\alpha+\alpha'|=1$ and $|\beta+\beta'|=t$, is positive semidefinite.
	 In view of \eqref{eq:extend L} the matrix  $M_{=1,=t}(L)$ is  block-diagonal with respect to the partition of its index set according to the value of $(|\alpha|, |\beta|)$, i.e., according to the partition of $I^{=1,=t}=\bigcup_{r,s} I^{=1,=t}_{r,s}$ defined in \cref{eq: partition kl} with $-1\le r\le 1, -t\le s\le t$ and $r\equiv 1, s\equiv t$ modulo 2.
	  So we are left with the task of showing that all diagonal blocks $M_{=1,=t}(L)[I^{=1,=t}_{r,s}]$ are positive semidefinite.
For this we use \cref{eq: ppt on L} to obtain that 
$$
M_{=1,=t}(L)[I^{=1,=t}_{r,s}]\succeq 0\Longleftrightarrow L({\bx\bx^*}^{\otimes ({r+1\over 2}) } \otimes {\bbx\bbx^*}^{\otimes ({1-r\over 2})}    \otimes {\by\by^*}^{\otimes ({t+s\over 2})} \otimes {\bby\bby^*}^{\otimes ({t-s\over 2})}) \succeq 0.
$$
This holds for all $r,s$ such that $-1\le r\le 1, -t\le s\le t, r\equiv 1, s\equiv t$ modulo 2 if and only if 
$$
L(\bx\bx^* \otimes {\by\by^*}^{\otimes (t-s')} \otimes {\bby\bby^*}^{\otimes s'}) \succeq 0,\ \ 
L(\bbx\bbx^* \otimes {\by\by^*}^{\otimes (t-s')} \otimes {\bby\bby^*}^{\otimes s'} )\succeq 0$$
for all $s'\in \{0\}\cup [t]$ (setting $s'={t-s\over 2}$). In view of \cref{eqrho1t}  we obtain that 
\begin{align} \label{eq: PPT on L}
L(\bx\bx^* \otimes {\by\by^*}^{\otimes (t-s')} \otimes {\bby\bby^*}^{\otimes s'}) =I_A\otimes I_B^{\otimes (t-s')} \otimes \mathsf T_B^{\otimes s'}(\rho_{1,t})
\end{align}
{and, since $L$ is Hermitian,} \begin{align*}
L(\bbx\bbx^* \otimes {\by\by^*}^{\otimes (t-s')} \otimes {\bby\bby^*}^{\otimes s'} )
= \overline{L(\bx\bx^* \otimes {\bby\bby^*}^{\otimes (t-s')} \otimes {\by\by^*}^{\otimes s'}) }
=\overline{ I_A\otimes \mathsf T_B^{\otimes (t-s')} \otimes I_B^{\otimes s'} (\rho_{1,t})}.
\end{align*}
Therefore, the positive semidefiniteness of all the diagonal blocks composing the matrix $M_{=1,=t}(L)$ follows from the PPT condition (\ref{eq:PPT}) combined with the symmetry condition (\ref{eq:symmetry}) and the fact that the conjugate of a Hermitian positive semidefinite matrix remains positive semidefinite.
\end{proof}

\begin{remark}\label{rem: level 2 implies ppt}
Note that it follows from relation \eqref{eq: PPT on L} in the above proof that, 
 if $\rho = L(\bx \bx^* \otimes \by \by^*)$ where the linear functional $L$ satisfies  $L \geq 0$ on $\Sigma_{2,2}$, then $\rho$ satisfies the PPT condition \eqref{eq:PPT}. In particular this implies that the PPT condition is contained in the definition of the parameter $\xidsep{t}$: if the program (\ref{eqxit}) defining $\xidsep{t}$ is feasible (for $t \geq 2$), then $\rho$ satisfies the PPT condition. 
\end{remark}

\appendix 

\section{Deriving the complex results from their real analogs} \label{sec:AppA}
In this appendix we show how the proofs of \cref{theomainmatrix,theomainTchakaloff} can be obtained from their real versions in \cite{Pu93,Tchakaloff,CimpricZalar}. We begin with recalling in Section \ref{AppPrel} the links between the main properties of the complex objects introduced in the paper and their real analogs. Then we give the proof of Theorem \ref{theomainTchakaloff}  in Section \ref{AppProofPutinar} and of Theorem \ref{theomainmatrix} in Section \ref{AppProofCimpric}.

\subsection{Preliminaries on changing variables from complex to real}\label{AppPrel}
\paragraph{Vectors and matrices.} Throughout we set $\i=\sqrt{-1}\in \C$. 
Then any complex scalar $x \in \C$ can be written (uniquely) as  $x = x_\Re + \i x_\Im$, where $x_\Re:=\Re(x)$ and $x_\Im:=\Im(x)$ denote, respectively, the real and imaginary parts of $x$.
This notation  extends  to vectors and matrices by letting the maps $\Re(\cdot)$ and $\Im(\cdot)$ act entrywise.
Any vector 
$x \in \C^n$ can be written $x =  x_\Re+ \i x_\Im$ with  $x_\Re:=\Re(x) , x_\Im := \Im(x)\in \R^n$. This gives a bijection 
\begin{equation} \label{CRbij}
	\phi:\C^n \to \R^n\times \R^n ~;~  \bx \mapsto (\xr,\xim).
\end{equation}
Similarly, for a complex matrix $G \in \C^{m \times m'}$ set $G_\Re := \Re(G), G_\Im := \Im(G)\in \R^{m\times m'}$ and define the $2m\times 2m'$ real matrix
\begin{equation}\label{RCMats}
G^\R:=
\begin{bmatrix}  
	G_\Re&  -G_\Im\\
	G_\Im& G_\Re
\end{bmatrix}.
\end{equation}
Then $G\in \C^{m\times m}$ is  Hermitian, i.e.,  $G^* = G$, if and only if  $G_\Re  = G_\Re^T$ and $G_\Im^T = -G_\Im$. Moreover, for $G\in \C^{m\times m}$  Hermitian and $w\in \C^m$  we have the  identity 
\begin{equation}\label{RCMatsw}
w^*G w = (w_\Re - \i w_\Im )^T(G_\Re + \i G_\Im)(w_\Re + \i w_\Im ) = 
\begin{bmatrix}  
	w_\Re^T & w_\Im ^T   
\end{bmatrix}
\begin{bmatrix}  
	G_\Re &  -G_\Im\\
	G_\Im & G_\Re
\end{bmatrix}
\begin{bmatrix}  
	w_\Re \\
	w_\Im    
\end{bmatrix},
\end{equation}
which implies the well-known equivalence
\begin{equation*} \label{RCMatsequiv}
G \succeq 0 
\iff
G^\R=
\begin{bmatrix}  
	G_\Re&  -G_\Im\\
	G_\Im& G_\Re
\end{bmatrix}
\succeq 0.
\end{equation*}

\paragraph{Polynomials.} Polynomials in $\Cxox$ with complex variables $\bx \in \C^n$ can be transformed into polynomials in $\Rxrxi$ with real variables $\xr,\xim\in \R^n$, via the change of variables $\bx = \xr + \i\xim$. In this way, any $p \in \Cxox$ corresponds to a unique pair of real polynomials
\begin{align*}
p_{\Re}(\xr,\xim) :=&  \Re(p(\xr + \i \xim,\xr - \i \xim))   \in \Rxrxi,\\ 
p_{\Im}(\xr,\xim) :=&  \Im(p(\xr + \i \xim,\xr - \i \xim))  \in \Rxrxi
\end{align*}
satisfying the identity
\begin{equation}\label{pCR}
p(\bx,\bbx)=p(\xr+\i \xim, \xr-\i \xim)= p_\Re(\xr,\xim)+ \i p_\Im(\xr, \xim).
\end{equation}
 Note that the degrees are preserved: $\deg_{\bx,\bbx}(p) = \max\{\deg_{\xr,\xim}(p_\Re), \deg_{\xr,\xim} (p_\Im) \}$.
A polynomial  $p$ is Hermitian, i.e., $\qo p = p$, if and only if $p_{\Im} = 0$.
Hence, the map 
\begin{align}
\Re : & \ \Cxoxh \to \Rxrxi ~;~ p(\bx,\bbx) \mapsto  p_{\Re}(\xr,\xim) \label{RCHpolybij}
\end{align}
is injective. 
This map is also surjective: take any $f \in \Rxrxi$ and define the polynomial $p(\bx,\bbx):= f(\frac{\bx +\bbx}{2},\frac{\bx -\bbx}{2\i})\in \C[\bx,\bbx]$, then $p$ is Hermitian and satisfies $f = p_\Re$. Finally, since any $p\qo p$ is Hermitian we have 
\begin{equation*}\label{eqsos}
\Re(p\qo p) = p_\Re^2 + p_\Im^2.
\end{equation*} 
Hence sums of Hermitian squares in $\Cxox$ are mapped to real sums of squares in $\Rxrxi$ and vice versa. 
 
\paragraph{Polynomial matrices.} For vectors and matrices with polynomial entries in $\Cxox$, the maps $\Re(\cdot)$ and $\Im(\cdot)$ act entrywise.
Additionally, for a  polynomial matrix $G \in \Cxox^{m\times m'}$,  we can define the real polynomial matrix $G^{\R} \in \Rxrxi^{2m\times 2m'}$  using relation (\ref{RCMats}), where $G_\Re,G_\Im$ are defined entrywise: if $G=(G_{ij}) $ then
$G_\Re=((G_{ij})_{\Re})$ and $G_\Im=((G_{ij})_\Im)$. Then $G$ is Hermitian if and only if $G^{\R}$ is symmetric and as we next observe this correspondance extends  to sums of squares.

\begin{lemma}\label{RCGpsd}
	Let $G \in \Cxox^{m\times m}$ be a polynomial matrix and let $G^\R\in \R[\xr,\xim]^{2m\times 2m}$ be the corresponding real polynomial matrix defined via (\ref{RCMats}).  Then $G$ is a Hermitian SoS-polynomial matrix if and only if $G^{\R}$ is a (real) SoS-polynomial matrix. 
\end{lemma}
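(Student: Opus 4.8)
The plan is to treat the real matricization $M \mapsto M^\R$ from \eqref{RCMats} as an injective $\R$-algebra homomorphism (with polynomial coefficients) whose image is characterized by a single commutation relation. First I would record the two structural identities this map satisfies: for complex polynomial matrices $A \in \C[\bx,\bbx]^{m\times k}$ and $B \in \C[\bx,\bbx]^{k \times \ell}$, viewed after the substitution $\bx = \xr + \i \xim$ as matrices over $\C[\xr,\xim]$, one has $(AB)^\R = A^\R B^\R$ and $(A^*)^\R = (A^\R)^T$. Both follow by a direct block computation from the definition \eqref{RCMats}, using that the real/imaginary decomposition of \eqref{pCR} is itself a ring homomorphism $\C[\bx,\bbx] \to \C[\xr,\xim]$. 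Granting these identities, the forward implication is immediate: if $G = UU^*$ with $U \in \C[\bx,\bbx]^{m\times k}$, then $G^\R = U^\R (U^*)^\R = U^\R (U^\R)^T$, and since $U^\R \in \R[\xr,\xim]^{2m\times 2k}$ this exhibits $G^\R$ as a real SoS-polynomial matrix.

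For the converse the task is to manufacture a complex SoS-decomposition of $G$ out of an arbitrary real one $G^\R = VV^T$ with $V \in \R[\xr,\xim]^{2m \times k}$. The key observation is that the real $2p \times 2q$ polynomial matrices $W$ that arise as $W = U^\R$ for some complex $U$ are exactly those satisfying the intertwining relation $J_p W = W J_q$, where $J_p = \left[\begin{smallmatrix} 0 & -I_p \\ I_p & 0\end{smallmatrix}\right]$; this is verified by equating the $p\times q$ blocks of $J_pW$ and $WJ_q$, which forces $W$ into the block form of \eqref{RCMats}. In particular $G^\R$, being literally of the form $M^\R$, commutes with $J_m$, so that $J_m G^\R J_m^T = G^\R$ (using $J_m J_m^T = I$); note also that a real SoS-polynomial matrix is symmetric, so $G^\R$ is symmetric and hence $G$ is Hermitian, as noted above.

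The hard part is passing from the arbitrary square root $V$ to one that respects the complex structure, and I would do this by symmetrizing. Set $W := \tfrac{1}{\sqrt 2}\,[\,V \mid J_m V\,] \in \R[\xr,\xim]^{2m \times 2k}$. On the one hand $WW^T = \tfrac12(VV^T + J_m VV^T J_m^T) = \tfrac12(G^\R + J_m G^\R J_m^T) = G^\R$. On the other hand a one-line computation gives $J_m W = \tfrac{1}{\sqrt2}[\,J_m V \mid -V\,] = W J_k$, so $W$ satisfies the intertwining relation and therefore $W = U^\R$ for some $U \in \C[\bx,\bbx]^{m\times k}$. Then $G^\R = WW^T = U^\R (U^\R)^T = (UU^*)^\R$, and injectivity of $M \mapsto M^\R$ yields $G = UU^*$, so $G$ is a Hermitian SoS-polynomial matrix. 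The only point demanding care is the bookkeeping of the change of variables $\bx = \xr + \i \xim$ throughout, so that the entries of $U$, recovered as real polynomials in $\xr,\xim$, are correctly reinterpreted as elements of $\C[\bx,\bbx]$.
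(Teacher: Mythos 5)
Your proof is correct, and the two directions relate differently to the paper's own argument. The forward implication is the paper's proof in slightly more abstract clothing: the paper computes $G_\Re = U_\Re U_\Re^T + U_\Im U_\Im^T$ and $G_\Im = U_\Im U_\Re^T - U_\Re U_\Im^T$ from $G=UU^*$ and assembles $G^\R=U^\R(U^\R)^T$ by an explicit block computation, which is exactly your identities $(AB)^\R=A^\R B^\R$ and $(A^*)^\R=(A^\R)^T$ specialized to $A=U$, $B=U^*$. For the converse, the paper offers only the sentence ``the converse result follows from retracing the above steps'', which taken literally leaves a gap: an arbitrary real factorization $G^\R=VV^T$ produces a $V$ that need not have the block structure of a realified matrix, so the forward steps cannot simply be run backwards. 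Your symmetrization $W=\tfrac{1}{\sqrt2}\,[\,V\mid J_m V\,]$, combined with the characterization of the image of $M\mapsto M^\R$ by the intertwining relation $J_pW=WJ_q$ (natural, since $J_p=(\i I_p)^\R$, so intertwining with $J$ encodes $\C$-linearity), is a correct and conceptually clean repair of exactly this gap; I checked the computations $WW^T=\tfrac12(G^\R+J_mG^\R J_m^T)=G^\R$ and $J_mW=WJ_{k}$, and both are right. It is worth noting that the ``retracing'' the authors presumably intend can also be carried out directly and yields the same witness as yours: writing $V=\left[\begin{smallmatrix}V_1\\ V_2\end{smallmatrix}\right]$ and matching the blocks of $VV^T$ with those of $G^\R$ forces $V_1V_1^T=V_2V_2^T=G_\Re$ and $V_2V_1^T=-V_1V_2^T=G_\Im$, so $U:=\tfrac{1}{\sqrt2}(V_1+\i V_2)$ satisfies $UU^*=G$ at once; a one-line check shows your $W$ equals $U^\R$ for this same $U$. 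So your route buys a structural explanation of why the complex structure can always be restored (and an honest proof where the paper only gestures), at the modest cost of the extra machinery of the intertwining characterization, while the direct split is shorter but less illuminating.
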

\begin{proof}
	Assume $G$ is a Hermitian  SoS-polynomial matrix. Let  $G = UU^*$ with $U \in \Cxox^{m\times k}$.
	Applying the change of variables from complex to real we get 
	$$
	G(\bx,\bbx) = G_\Re(\xr,\xim) + \i G_\Im(\xr,\xim)
	=	U(\xr + \i \xim,\xr - \i \xim)U^*(\xr + \i \xim,\xr - \i \xim) 
	$$
	$$
	= (U_{\Re} + \i U_{\Im})(U_{\Re}^T - \i U_{\Im}^T) \\
	= U_{\Re} U_{\Re}^T + U_{\Im}U_{\Im}^T + \i \big(U_{\Im}U_{\Re}^T - U_{\Re}U_{\Im}^T \big).
	$$
	This implies $G_\Re = U_{\Re} U_{\Re}^T + U_{\Im}U_{\Im}^T$ and $G_\Im = U_{\Im}U_{\Re}^T - U_{\Re}U_{\Im}^T$.
	and thus
	\begin{align*}
	G^\R :=&	
	\begin{bmatrix}  
	G_{\Re} &  -G_{\Im} \\
	G_{\Im} &  G_{\Re}  
	\end{bmatrix}
	=
	\begin{bmatrix}  
	U_{\Re} U_{\Re}^T + U_{\Im}U_{\Im}^T & -(U_{\Im}U_{\Re}^T - U_{\Re}U_{\Im}^T)\\
	U_{\Im}U_{\Re}^T - U_{\Re}U_{\Im}^T & U_{\Re} U_{\Re}^T + U_{\Im}U_{\Im}^T  
	\end{bmatrix}\\
	=&
	\begin{bmatrix}  
	U_{\Re} &  -U_{\Im} \\
	U_{\Im} &  U_{\Re}  
	\end{bmatrix}
	\begin{bmatrix}  
	U^T_{\Re} &  U^T_{\Im} \\
	-U^T_{\Im} &  U^T_{\Re}  
	\end{bmatrix}
	=: U^\R(U^\R)^T,
	\end{align*}
	which shows $G^\R$ is an SoS-polynomial matrix. 
	The converse result follows from retracing the above steps.
\end{proof}

\paragraph{Quadratic modules.}
Given a set $S \subseteq \Cxoxh$ of Hermitian polynomials we define  its real analog  by applying the map $\Re(\cdot)$ from (\ref{RCHpolybij}) elementwise to the set $S$ and set
\begin{equation}\label{eqSRe}
\Sre := \Re(S) = \{p_{\Re}: p \in S \}  \subseteq \Rxrxi.
\end{equation}
Given a Hermitian  
polynomial matrix $G \in \Cxox^{m \times m}$ we  define the  set of Hermitian polynomials
\begin{equation*}
S^{G} := \{ w^* G  w: w \in \C^m \} \subseteq \Cxoxh
\end{equation*}
and, for  the corresponding real symmetric matrix $G^\R\in \R[\xr,\xim]^{2m\times 2m}$ defined  via (\ref{RCMats}), we define the set of real polynomials   
\begin{equation*}
S^{G^\R}  :=  \{ (w_\Re, w_\Im)^T G^{\R} (w_\Re, w_\Im): w \in \C^{m} \} \subseteq \Rxrxi.
\end{equation*}
These two sets satisfy the expected correspondance:
 $$ 
 S^{G^\R}= \Re(S^{G}),$$ since,   in view of relation (\ref{RCMatsw}),  we have 
 $\Re(w^*Gw) = (w_\Re, w_\Im)^T G^{\R} (w_\Re, w_\Im)$ for all $w\in\C^m$.

This  correspondance extends to   the (real part of the) truncated complex quadratic module $\mathcal M(S)_{2t}$ generated by $S\subseteq \C[\bx,\bbx]^h$  and the truncated real quadratic module generated by the corresponding set $S_\Re\subseteq  \R[\xr,\xim]$ via (\ref{eqSRe}), which is denoted here as $\mathcal M^\R(S_\Re)_{2t}$ and defined by
$$
\calM^\R(\Sre)_{2t} := \text{cone}\{g_{\Re}f^2:  f \in \Rxrxi ~,~ g \in S,~~\deg(g_{\Re}f^2) \leq 2t  \}.
$$
Namely, we have 
$$ \Re(\calM(S)_{2t})= \calM^\R(\Sre)_{2t}.$$
Indeed we have  $\Re(g p \qo p) = g_{\Re}(p_\Re^2 + p_\Im^2)$ and  the next relation, collected for further reference:
\begin{equation} \label{RCHquadbij} 
g p \qo p \in  \calM_{2t}(S)  \iff \Re(g p \qo p) =g_\Re(p_\Re^2+p_\Im^2) \in \calMr[2t](\Sre) \text{ for all } p\in \Cxox,~g \in S.
\end{equation}

\begin{lemma} \label{RCArch}
	For any $S \subseteq \Cxoxh$, the (complex) quadratic module $\calM(S)$ is Archimedean if and only if the real quadratic module $\calMr[](\Sre)$ is Archimedean.
\end{lemma}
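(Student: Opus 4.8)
The plan is to transport the single boundedness certificate back and forth across the bijection $\Re$ between Hermitian complex polynomials $\Cxoxh$ and real polynomials established in~(\ref{RCHpolybij}), using the quadratic-module correspondence recorded in~(\ref{RCHquadbij}). Recall that $\calM(S)$ is Archimedean if $R-\sum_{i=1}^n x_i\overline{x_i}\in\calM(S)$ for some $R>0$, whereas $\calM^\R(\Sre)$ is Archimedean if $R-\sum_{i=1}^n\big((\xr)_i^2+(\xim)_i^2\big)\in\calM^\R(\Sre)$ for some $R>0$. The crux is the observation that these two certificates correspond to one another under $\Re$: the polynomial $\sum_{i=1}^n x_i\overline{x_i}$ is Hermitian and, after the substitution $\bx=\xr+\i\xim$, evaluates to $\sum_{i=1}^n|x_i|^2=\sum_{i=1}^n\big((\xr)_i^2+(\xim)_i^2\big)$, so that
\[
\Re\Big(R-\sum_{i=1}^n x_i\overline{x_i}\Big)=R-\sum_{i=1}^n\big((\xr)_i^2+(\xim)_i^2\big).
\]

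For the forward implication, I would assume $R-\sum_{i=1}^n x_i\overline{x_i}\in\calM(S)$, pick a finite $t$ with $R-\sum_{i=1}^n x_i\overline{x_i}\in\calM(S)_{2t}$, and apply $\Re$. By the correspondence $\Re(\calM(S)_{2t})=\calM^\R(\Sre)_{2t}$ (a consequence of~(\ref{RCHquadbij})) together with the displayed identity, this gives $R-\sum_{i=1}^n\big((\xr)_i^2+(\xim)_i^2\big)\in\calM^\R(\Sre)_{2t}\subseteq\calM^\R(\Sre)$, which is exactly the Archimedean certificate for the real quadratic module.

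For the converse, I would start from $R-\sum_{i=1}^n\big((\xr)_i^2+(\xim)_i^2\big)\in\calM^\R(\Sre)_{2t}$ for some finite $t$ and pull it back. The key point here is that every element of $\calM(S)_{2t}$ is Hermitian: each generator $g\,p\overline p$ with $g\in S\subseteq\Cxoxh$ satisfies $\overline{g\,p\overline p}=\overline g\,\overline p\,p=g\,p\overline p$, and Hermitian polynomials are closed under conic combinations. Consequently, by~(\ref{RCHpolybij}), $\Re$ restricts to a \emph{bijection} from $\calM(S)_{2t}$ onto $\calM^\R(\Sre)_{2t}$. The unique Hermitian preimage of the real certificate therefore lies in $\calM(S)_{2t}$, and by injectivity of $\Re$ on $\Cxoxh$ combined with the displayed identity it can only be $R-\sum_{i=1}^n x_i\overline{x_i}$. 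Hence $R-\sum_{i=1}^n x_i\overline{x_i}\in\calM(S)_{2t}\subseteq\calM(S)$, so $\calM(S)$ is Archimedean.

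Once the dictionary of Section~\ref{AppPrel} is in hand, the argument is essentially bookkeeping, and I expect the main obstacle to be the converse direction: there one must use that $\Re$ is not merely surjective but a genuine bijection on the relevant quadratic modules, so that the preimage of the real certificate is \emph{forced} to be the canonical complex certificate $R-\sum_{i=1}^n x_i\overline{x_i}$ and not some other Hermitian polynomial sharing its real part---which cannot happen precisely because $\Re$ is injective on $\Cxoxh$.
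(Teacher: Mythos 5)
Your proof is correct and takes essentially the same route as the paper, which deduces the lemma directly from the correspondence \eqref{RCHquadbij}: in both cases the argument consists of transporting the single certificate $R-\sum_{i=1}^n x_i\overline{x_i}$ across the identity $\Re(\calM(S)_{2t})=\calM^\R(\Sre)_{2t}$. The only difference is presentational: you make explicit the injectivity of $\Re$ on $\Cxoxh$ used in the converse direction, a bookkeeping point the paper's one-line proof leaves implicit.
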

\begin{proof}
	Directly from \cref{RCHquadbij} since, for any scalar $R\in\R$, 
	$R^2 -\bx^*\bx \in \calM(S)$  if and only if 	$\Re(R^2-\bx^* \bx)=R^2 - \xr^T\xr +\xim^T\xim  \in \calMr[](\Sre)$.
\end{proof}

\paragraph{Positivity domains and measures.}
There is  a natural correspondance between the complex positivity domain $\sD(S)$  of a set $S \subseteq \Cxoxh$ and the real positivity domain of the corresponding set  $S_\Re\subseteq \R[\xr,\xim]$, which is denoted $ \sDr(\Sre)$ and defined by
$$
\sDr(\Sre) := \{ (w_\Re,w_\Im) \in \mathbb{R}^{2n}: g_{\Re}(w_\Re,w_\Im) \geq 0 ~\forall~ g\in S \}.
$$

Indeed, in view of \cref{RCHpolybij} and using the complex/real bijection map $\phi$ from (\ref{CRbij}), we have  $$\sDr(\Sre) = \phi(\sD(S)).$$
{Given a measure $\mu^{\R}$  on $\R^{2n}$ we  define the complex measure $\mu$ on $\C^n$ as $\mu=\mu^{\R}\circ \phi$, the push-forward of $\mu^{\R}$ by the map $\phi^{-1}$, so that  
\begin{equation}\label{eqmu}
\int_{\C^n} p(\bx)d\mu =\int_{\R^{2n}} p\circ \phi^{-1}(\xr,\xim) d\mu^{\R}
= \int_{\R^{2n}} p_\Re(\xr,\xim)d\mu^\R + \i \int_{\R^{2n}}p_\Im(\xr, \xim)d\mu^\R
\end{equation}
for any $p\in\C[\bx,\bbx]$ (using (\ref{pCR})).}

If $\mu^{\R}$ is supported by  $\sDr(\Sre)$ (i.e., $\mu^\R(\R^{2n} \setminus \sDr(\Sre)) = 0$),  then $\mu$ is  supported by $\sD(S)$ (i.e., $\mu(\C^n \setminus \sD(S)) = 0$).
This follows from the fact that $\phi(\C^n \setminus \sD(S)) = \R^{2n} \setminus \sDr(\Sre)$.
	 
\paragraph{Linear functionals.}
For a linear functional $L: \Cxox \to \C$ we have $L(p) = \Re(L(p)) + \i \Im(L(p))$ for all $p\in \Cxox$. Recall that $L$ is Hermitian if $\qo{L(p)} = L(\qo p)$. For any Hermitian $L$,   we can define a real linear functional $\Lr : \Rxrxi \to \R$ by
\begin{equation}\label{llrdef}
 \Lr (f) := L\big(f\big(\frac{\bx + \bbx}{2},\frac{\bx - \bbx}{2\i}\big)\big) ~\text{ for any }~ f\in \Rxrxi.
\end{equation}
For a Hermitian polynomial $p\in \Cxoxh$,  by \cref{RCHpolybij} we have  $p_\Re(\frac{\bx + \bbx}{2},\frac{\bx - \bbx}{2\i}) = p(\bx,\bbx)$ and thus 
\begin{equation}\label{llr}
	L(p) = \Lr (p_{\Re})\quad \text{ for any } p\in \C[\bx,\bbx]^h.
\end{equation}
Then for any   $p\in \Cxox$ we have
\begin{equation}\label{eqLLRe}
L(p) = L\big(p_\Re\big(\frac{\bx + \bbx}{2},\frac{\bx - \bbx}{2\i}\big)\big) + \i L\big(p_\Im\big(\frac{\bx + \bbx}{2},\frac{\bx - \bbx}{2\i}\big)\big) = \Lr (p_{\Re}) + \i \Lr (p_{\Im}).
\end{equation}
 In particular, we have $L(p \qo p) = \Lr (p_{\Re}^2 + p_{\Im}^2)$ for any $p\in\C[\bx,\bbx]$. This implies that $L$ is positive (on Hermitian sums of squares) if and only if $\Lr$ is positive (on real sums of squares). Since $\Re(\cdot)$ preserves degrees, the restriction of $\Lr$ to $\Rxrxi_t$ corresponds to the restriction of $L$ to $ \Cxox_t$.
 This gives the following correspondance for truncated quadratic modules.
 
\begin{lemma}\label{RCPosQuad}
	Given  $S \subseteq \Cxoxh$,  a Hermitian linear map $L\in\Cxox \to \C$, the corresponding  set $S_\Re\subseteq \R[\xr,\xim]$ and the corresponding real linear map $L^\R\in \R[\xr,\xim]\to\R$ we have 
	$$L \geq 0 ~\text{on}~ \calM(S)_{2t} \iff \Lr \geq 0 ~\text{on}~ \calMr[](\Sre)_{2t}\quad \text{ for any } t\in \N\cup\{\infty\}.$$
\end{lemma}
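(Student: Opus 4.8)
The plan is to reduce the equivalence to a matching of the \emph{generators} of the two truncated quadratic modules and to push everything through the identity \cref{llr}, which gives $L(p)=\Lr(p_\Re)$ for every Hermitian $p$. First I would record the generator reductions. Since $\calM(S)_{2t}$ is the conic hull of the Hermitian elements $gp\qo p$ with $g\in S\cup\{1\}$, $p\in\Cxox$ and $\deg(gp\qo p)\le 2t$, linearity of $L$ shows that $L\ge 0$ on $\calM(S)_{2t}$ is equivalent to $L(gp\qo p)\ge 0$ for all such generators; likewise $\Lr\ge 0$ on $\calM^\R(\Sre)_{2t}$ is equivalent to $\Lr(g_\Re f^2)\ge 0$ for all $g\in S\cup\{1\}$ and $f\in\Rxrxi$ with $\deg(g_\Re f^2)\le 2t$. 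Each generator $gp\qo p$ is Hermitian, so combining \cref{llr} with the identity $\Re(gp\qo p)=g_\Re(p_\Re^2+p_\Im^2)$ yields the single computation on which both directions rest:
\[
L(gp\qo p)=\Lr\big(g_\Re(p_\Re^2+p_\Im^2)\big).
\]

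For the implication ``$\Leftarrow$'', I would take an arbitrary generator $gp\qo p$ of $\calM(S)_{2t}$; by \cref{RCHquadbij} the element $g_\Re(p_\Re^2+p_\Im^2)$ lies in $\calM^\R(\Sre)_{2t}$, so if $\Lr\ge 0$ on the real module the displayed identity forces $L(gp\qo p)\ge 0$, and hence $L\ge 0$ on $\calM(S)_{2t}$. For ``$\Rightarrow$'', I would start from a real generator $g_\Re f^2$ and use surjectivity of the map $\Re\colon\Cxoxh\to\Rxrxi$: setting $p:=f\big(\tfrac{\bx+\bbx}{2},\tfrac{\bx-\bbx}{2\i}\big)$ gives a Hermitian polynomial with $p_\Re=f$ and $p_\Im=0$, so that $g_\Re(p_\Re^2+p_\Im^2)=g_\Re f^2$. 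Then \cref{RCHquadbij} places the complex generator $gp\qo p$ in $\calM(S)_{2t}$, and the displayed identity gives $\Lr(g_\Re f^2)=L(gp\qo p)\ge 0$; by the generator reduction, $\Lr\ge 0$ on $\calM^\R(\Sre)_{2t}$.

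The one place that needs care is the degree bookkeeping: passing from a single complex generator to its real counterpart $g_\Re(p_\Re^2+p_\Im^2)=g_\Re p_\Re^2+g_\Re p_\Im^2$ splits it into two summands, and one must check that neither exceeds degree $2t$. This is exactly the content already packaged in \cref{RCHquadbij}, which holds because $p_\Re^2$ and $p_\Im^2$ are sums of squares and hence their leading forms cannot cancel in $p_\Re^2+p_\Im^2=p\qo p$, so $\deg(g_\Re p_\Re^2),\deg(g_\Re p_\Im^2)\le\deg(gp\qo p)\le 2t$. Since both the generator reduction and \cref{llr} are valid verbatim at every truncation order, the argument runs uniformly for all $t\in\N\cup\{\infty\}$, the case $t=\infty$ being the one with no degree restriction. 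I expect no genuine obstacle: the lemma is essentially a repackaging of \cref{llr} and \cref{RCHquadbij}, the only subtlety being the correct handling of the single-square multipliers on each side.
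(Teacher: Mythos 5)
Your proof is correct and takes essentially the same route as the paper's, which likewise reduces to generators by linearity and combines the identity $L(gp\qo p)=\Lr\big(g_\Re(p_\Re^2+p_\Im^2)\big)$ from \cref{llr} with the membership correspondence \cref{RCHquadbij}. Your explicit surjectivity argument for the ``$\Rightarrow$'' direction and the degree bookkeeping merely spell out what the paper's two-line proof leaves implicit in \cref{RCHquadbij}.
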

\begin{proof} 
This  follows form the linearity of $L$ and $\Lr$	since, by \cref{RCHquadbij}, $g p \qo p \in  \calM(S)$ if and only if $ \Re(gp  \qo p)=g_{\Re}(p_\Re^2 + p_\Im^2)  \in \calMr[](\Sre)$ and, by \cref{llr}, $L(g p \qo p) = \Lr(g_{\Re}(p_\Re^2 + p_\Im^2))$.
\end{proof}

Finally note that an  evaluation functional $L_{w}$ at a point $w\in \C^d$ corresponds to the  evaluation functional $L_{(w_\Re,w_\Im)}$ at the point $(w_\Re,w_\Im)\in \R^{2d}$ since,  for every $p\in \Cxox$, we have
  \begin{equation*}\label{eqLLw}
 L_{w}(p) = p(w,\qo w) = p_\Re(w_\Re,w_\Im) + \i p_\Im(w_\Re,w_\Im) = \Lr_{(w_\Re,w_\Im)}(p_\Re) + \i \Lr_{(w_\Re,w_\Im)}(p_\Im).
 \end{equation*}

\paragraph{Matrix-valued linear functionals.}
Consider a complex matrix-valued linear map 
$$
\calL:\Cxox \to \C^{m\times m}, \quad p\mapsto \calL(p) := \big( L_{ij}(p) \big)_{i,j\in[m]},
$$
where  each $L_{ij}:\Cxox \to \C$ is  scalar-valued. Then  $\calL$ is Hermitian if and only if, for all $p\in \C[\bx,\bbx]$, we have  $\calL(\qo p) = \calL(p)^*$, i.e., 
$$
\Big( \Re(L_{ij}(\qo p)) + \i \Im(L_{ij}(\qo p))  \Big)_{i,j=1}^m 
=
\Big( \Re(L_{ji}(p)) - \i \Im(L_{ji}( p))  \Big)_{i,j=1}^m
$$
or, equivalently, 
$\Re(L_{i,j}(\qo p)) = \Re(L_{j,i}(p))$ and $\Im(L_{i,j}(\qo p)) = -\Im(L_{j,i}( p))$  for all $i,j \in [m]$. This implies that if $\calL$ is Hermitian and $p$ is Hermitian then the complex matrix $\calL(p)$ is Hermitian. 
 
Assume $\calL$ is Hermitian. Then we define  the real matrix-valued linear functional 
$$
\calLr: \Rxrxi \to \R^{2m \times 2m}, \quad f\in \R[\xr,\xim]\mapsto \calL^\R(f)
$$
\begin{equation} \label{calLrdef} 
\calL^\R(f):=\Big( \calL\big(f\big(\frac{\bx +\bbx}{2},\frac{\bx -\bbx}{2i}\big)\big)  \Big)^\R 
=
\begin{bmatrix}  
	\Re( \calL(f(\frac{\bx +\bbx}{2},\frac{\bx -\bbx}{2i})) ) &  -\Im( \calL(f(\frac{\bx +\bbx}{2},\frac{\bx -\bbx}{2\i})) ) \\
	\Im( \calL(f(\frac{\bx +\bbx}{2},\frac{\bx -\bbx}{2\i})) ) & \Re( \calL(f(\frac{\bx +\bbx}{2},\frac{\bx -\bbx}{2\i})) )  
\end{bmatrix}.
\end{equation}
Since $f(\frac{\bx +\bbx}{2},\frac{\bx -\bbx}{2\i})$ is Hermitian it follows that  
$-\Im( \calL(f(\frac{\bx +\bbx}{2},\frac{\bx -\bbx}{2\i})) )
 =
\Im( \calL(f(\frac{\bx +\bbx}{2},\frac{\bx -\bbx}{2\i})) )^T$. Hence $\calLr$ takes its values in the cone $\calS^{2m}$ of symmetric matrices.

\begin{lemma} \label{calLgpp}
Given a Hermitian linear map $\calL:\Cxox \to \C^{m\times m}$ and the corresponding map $\calL^\R$ from (\ref{calLrdef}),  $g \in \Cxoxh$ and  $p\in \Cxox$ we have the following equivalence
	$$
	\calL(g p \qo p) \succeq 0 \iff  	\calLr(g_\Re(p_\Re^2 +p_\Im^2)) \succeq 0.
	$$
\end{lemma}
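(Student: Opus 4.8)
The plan is to reduce the desired equivalence to the elementary fact, recorded just after \eqref{RCMatsw}, that a Hermitian complex matrix $G\in\C^{m\times m}$ satisfies $G\succeq 0$ if and only if its real representation $G^\R\succeq 0$. First I would observe that the argument $g p \qo p$ is a Hermitian polynomial: indeed $g\in\Cxoxh$ is Hermitian by assumption and $p\qo p$ is always Hermitian, so their product is Hermitian. Since $\calL$ is a Hermitian linear map, it sends the Hermitian element $g p \qo p$ to a Hermitian matrix $\calL(g p \qo p)\in\C^{m\times m}$. Hence the left-hand side condition is equivalent, via the matrix psd equivalence above, to $\bigl(\calL(g p \qo p)\bigr)^\R\succeq 0$. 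The whole lemma then follows once I verify the single matrix identity
\begin{equation*}
\bigl(\calL(g p \qo p)\bigr)^\R = \calLr\bigl(g_\Re(p_\Re^2+p_\Im^2)\bigr).
\end{equation*}

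To establish this identity I would set $f:=g_\Re(p_\Re^2+p_\Im^2)$ and use the real-part computation $\Re(g p \qo p)=g_\Re(p_\Re^2+p_\Im^2)$ (the formula used in \eqref{RCHquadbij}), so that $f=(g p \qo p)_\Re$. Because $g p \qo p$ is Hermitian, applying the inverse change of variables to its real part recovers the polynomial itself: by \eqref{RCHpolybij} we have $(g p\qo p)_\Re\bigl(\tfrac{\bx+\bbx}{2},\tfrac{\bx-\bbx}{2\i}\bigr)=g p\qo p$, i.e. $f\bigl(\tfrac{\bx+\bbx}{2},\tfrac{\bx-\bbx}{2\i}\bigr)=g p\qo p$. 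Plugging this into the defining formula \eqref{calLrdef} for $\calLr$ gives exactly $\calLr(f)=\bigl(\calL(g p\qo p)\bigr)^\R$, which is the claimed identity.

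Chaining the two facts yields the equivalence: $\calL(g p \qo p)\succeq 0$ iff $\bigl(\calL(g p \qo p)\bigr)^\R\succeq 0$ (matrix psd equivalence) iff $\calLr\bigl(g_\Re(p_\Re^2+p_\Im^2)\bigr)\succeq 0$ (the identity just proved). I do not expect any genuine obstacle here; the only points requiring a little care are confirming that $g p \qo p$ is Hermitian so that both the psd equivalence and the inverse change of variables \eqref{RCHpolybij} apply, and correctly matching the real-part formula $\Re(g p\qo p)=g_\Re(p_\Re^2+p_\Im^2)$ with the definition of $\calLr$. Everything else is a direct substitution into \eqref{calLrdef}.
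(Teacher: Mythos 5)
Your proof is correct and takes essentially the same route as the paper's: the paper's one-line argument likewise notes that $g p \qo p$ is Hermitian with real part $g_\Re(p_\Re^2+p_\Im^2)$, so that by \eqref{calLrdef} the matrix $\calLr(g_\Re(p_\Re^2+p_\Im^2))$ is exactly the real representation $\bigl(\calL(g p \qo p)\bigr)^\R$ from \eqref{RCMats}, and then applies the equivalence $G\succeq 0 \iff G^\R \succeq 0$. You have simply made explicit the substitution via \eqref{RCHpolybij} that the paper compresses into its citation of \eqref{RCMats}, \eqref{RCHpolybij} and \eqref{calLrdef}.
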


\begin{proof}
From \cref{RCMats,RCHpolybij,calLrdef} we obtain that 
$$
0 \preceq
\calL(g p \qo p)  \iff 
0 \preceq
\begin{bmatrix}  
	\Re(\calL(g p \qo p)) & -\Im(\calL(g p \qo p))\\
	\Im(\calL(g p \qo p)) & \Re(\calL(g p \qo p))   
\end{bmatrix}
= \calLr(g_\Re (p_\Re^2 +p_\Im^2)) , 
$$
because $g p \qo p$ is Hermitian.
\end{proof}

\begin{corollary} \label{calLgppcor}
Given $S\subseteq \C[\bx,\bbx]^h$, a	 Hermitian linear map $\calL:\Cxox \to \C^{m\times m}$ is positive on $\calM(S)$ if and only if the corresponding real linear map $\calLr$ from (\ref{calLrdef}) is positive on $\calMr[](\Sre)$.
\end{corollary}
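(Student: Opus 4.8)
The plan is to deduce the corollary directly from Lemma \ref{calLgpp}, which already establishes the single-element equivalence $\calL(g p \qo p)\succeq 0\iff \calLr(g_\Re(p_\Re^2+p_\Im^2))\succeq 0$ for a fixed Hermitian $g$ and a fixed $p\in\Cxox$. The only thing left is to match the \emph{generators} of the two quadratic modules: positivity of $\calL$ on $\calM(S)$ amounts to $\calL(g p\qo p)\succeq 0$ for all $g\in S\cup\{1\}$ and all $p\in\Cxox$, whereas positivity of $\calLr$ on $\calMr[](\Sre)$ amounts to $\calLr(g_\Re f^2)\succeq 0$ for all such $g$ (note $1_\Re=1$) and all $f\in\Rxrxi$.

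First I would treat the implication from the real to the complex side. Assuming $\calLr\geq 0$ on $\calMr[](\Sre)$, fix $g\in S\cup\{1\}$ and an arbitrary $p\in\Cxox$. Writing $g_\Re(p_\Re^2+p_\Im^2)=g_\Re p_\Re^2+g_\Re p_\Im^2$ exhibits it as a conic combination of two generators of $\calMr[](\Sre)$ (taking $f=p_\Re$ and $f=p_\Im$), so it lies in $\calMr[](\Sre)$ and is mapped to a positive semidefinite matrix; Lemma \ref{calLgpp} then returns $\calL(g p\qo p)\succeq 0$. Since $g,p$ are arbitrary this gives $\calL\geq 0$ on $\calM(S)$.

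For the converse, assuming $\calL\geq 0$ on $\calM(S)$, fix $g\in S\cup\{1\}$ and an arbitrary $f\in\Rxrxi$. Here I would invoke the surjectivity of the map $\Re$ recorded just after \eqref{RCHpolybij}: setting $p:=f(\tfrac{\bx+\bbx}{2},\tfrac{\bx-\bbx}{2\i})$ produces a \emph{Hermitian} $p\in\Cxox$ with $p_\Re=f$ and $p_\Im=0$, whence $p_\Re^2+p_\Im^2=f^2$. Because $p$ is Hermitian, $\qo p=p$ and $g p\qo p$ is a bona fide element of $\calM(S)$, so $\calL(g p\qo p)\succeq 0$; Lemma \ref{calLgpp} then yields $\calLr(g_\Re f^2)=\calLr(g_\Re(p_\Re^2+p_\Im^2))\succeq 0$. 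Hence $\calLr\geq 0$ on $\calMr[](\Sre)$.

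I do not expect a genuine obstacle: the statement is a bookkeeping consequence of Lemma \ref{calLgpp} together with the polynomial dictionary of Section \ref{AppPrel}. The one point deserving care---and the likeliest source of a slip---is ensuring that each real generator $g_\Re f^2$ is realized through an honestly Hermitian $p$ (so that $\qo p=p$ keeps $g p\qo p$ inside the complex module), and dually that the cross term $g_\Re p_\Im^2$ arising from a non-Hermitian $p$ still qualifies as a member of $\calMr[](\Sre)$. Both are immediate from the bijectivity of $\Re$ and the cone structure of the modules, so once these identifications are fixed the corollary follows.
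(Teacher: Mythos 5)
Your proposal is correct and follows exactly the route the paper intends: the corollary is an immediate consequence of Lemma \ref{calLgpp} once the generators of the two quadratic modules are matched, using the splitting $g_\Re(p_\Re^2+p_\Im^2)=g_\Re p_\Re^2+g_\Re p_\Im^2$ in one direction and, in the other, the surjectivity of the map $\Re$ in \eqref{RCHpolybij} to realize each real generator $g_\Re f^2$ via the Hermitian polynomial $p=f\big(\tfrac{\bx+\bbx}{2},\tfrac{\bx-\bbx}{2\i}\big)$ with $p_\Re=f$, $p_\Im=0$. Your care about the Hermitianity of $p$ and the conic structure of the modules is exactly the right bookkeeping, and linearity of $\calL$ and $\calLr$ extends positivity from generators to the full modules as needed.
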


\subsection{Deriving Theorem \ref{theomainTchakaloff} from its real analog} \label{AppProofPutinar}
We can now derive Theorem \ref{theomainTchakaloff}, which  we stated for complex polynomials, from the following well-known results for real polynomials  from \cite{Pu93} and \cite{Tchakaloff}.

\begin{theorem} \label{theomainTchakaloffREAL}
	Let $S \subseteq  \R[\bx]$ such that
	$\calMr[](S)$ is Archimedean and 
	let $L : \R[\bx] \to \R$ be a linear map that is nonnegative on $\calMr[](S)$.
	Then the  following holds.
	\begin{itemize}
		\item[(i)] (Putinar \cite{Pu93}) There exists a measure  $\mu^\R$ that is supported on $\sD^\R(S)$, the positivity domain of $S$ 	defined by
		$$
		\sD^\R(S) = \{a \in \R^n: g(a) \geq 0 \text{ for all } g \in S \},
		$$
		 such that $L(f)=\int f d\mu$ for all $f \in  \R[\bx]$.
		\item[(ii)] (Tchakaloff~\cite{Tchakaloff})		
		For any  integer $k \in \N$ there exists a linear map $ \wh L: \R[\bx] \to \R$ 
		such that 	
	$$	\wh L (f)= L(f)\ \forall f\in \R[\bx]_k  \quad \text{ and } \quad \wh L =  \sum_{\ell=1}^{K} \lambda_\ell L_{a^\ell}$$
		for some  integer $K\ge 1$, scalars  $\lambda_1,\ldots, \lambda_K>0$ and vectors $a^1,\ldots,a^K\in\sD^\R(S)$.
	\end{itemize}
	
\end{theorem}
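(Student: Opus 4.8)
The plan is to prove the two assertions separately. Part (i) is a Putinar-type representation theorem, which I would obtain through a functional-analytic (GNS plus spectral theorem) argument; part (ii) then follows from (i) by a Carath\'eodory dimension-counting argument applied to the representing measure produced in (i).

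For part (i), the first observation is that the Archimedean hypothesis gives some $N>0$ with $N-\sum_i x_i^2\in\mathcal{M}^\R(S)$, so that $\sD^\R(S)$ is closed and bounded, hence compact. Nonnegativity of $L$ on $\mathcal{M}^\R(S)$ then makes $\langle p,q\rangle:=L(pq)$ a positive semidefinite symmetric bilinear form on $\R[\bx]$; passing to the quotient by its kernel and completing yields a real Hilbert space $H$ with a cyclic vector $[1]$ (the class of the constant $1$). On $H$ the multiplication operators $M_i\colon[p]\mapsto[x_i p]$ are symmetric and pairwise commuting, and they are bounded because $N-x_i^2=(N-\sum_j x_j^2)+\sum_{j\neq i}x_j^2\in\mathcal{M}^\R(S)$ gives $\|M_i[p]\|^2=L(x_i^2p^2)\le N\,L(p^2)=N\|[p]\|^2$. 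By the spectral theorem for a finite family of commuting bounded self-adjoint operators there is a projection-valued measure $E$ on $\R^n$ with $M_i=\int\lambda_i\,dE(\lambda)$; setting $\mu(\cdot):=\langle E(\cdot)[1],[1]\rangle$ produces a positive Borel measure with $L(p)=\langle p(M_1,\ldots,M_n)[1],[1]\rangle=\int p\,d\mu$ for all $p\in\R[\bx]$. To localize the support I would use that, for each $g\in S$, $\langle g(M)[p],[p]\rangle=L(g\,p^2)\ge 0$ for all $p$ forces $g(M)\succeq 0$, whence $\mu(\{g<0\})=0$; intersecting over $g\in S$ gives $\mathrm{supp}(\mu)\subseteq\sD^\R(S)$, as required. (Alternatively one may invoke Putinar's Positivstellensatz together with Haviland's theorem, using the same boundedness estimate to pass from strict to non-strict positivity by approximation.)

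For part (ii), fix $k$ and let $N_k=\binom{n+k}{k}$ be the number of monomials of degree at most $k$. Let $\mu$ be the measure from (i) and consider the moment map $\Phi\colon\sD^\R(S)\to\R^{N_k}$, $a\mapsto(a^\alpha)_{|\alpha|\le k}$. Since $\sD^\R(S)$ is compact, $\Phi(\sD^\R(S))$ is compact and $\cone(\Phi(\sD^\R(S)))$ is a closed convex cone. The vector $m:=(L(\bx^\alpha))_{|\alpha|\le k}=\int\Phi\,d\mu$ belongs to this cone: a Hahn--Banach separation argument shows that an integral of the $\Phi$-valued function against the positive measure $\mu$ cannot be strictly separated from $\cone(\Phi(\sD^\R(S)))$. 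Carath\'eodory's theorem in $\R^{N_k}$ then expresses $m$ as a conic combination of at most $N_k+1$ points of $\Phi(\sD^\R(S))$, i.e.\ $m=\sum_{\ell=1}^{K}\lambda_\ell\,\Phi(a^\ell)$ with $\lambda_\ell>0$ and $a^\ell\in\sD^\R(S)$. The functional $\wh L:=\sum_{\ell=1}^K\lambda_\ell L_{a^\ell}$ is then finitely atomic, supported on $\sD^\R(S)$, and agrees with $L$ on every monomial of degree at most $k$, hence on all of $\R[\bx]_k$.

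The main obstacle I anticipate lies in part (i): verifying boundedness of the multiplication operators and justifying the joint spectral decomposition, and then converting the operator inequalities $g(M)\succeq 0$ into the set inclusion $\mathrm{supp}(\mu)\subseteq\sD^\R(S)$. Part (ii) is comparatively routine once (i) is available; the only point needing care is the closedness of $\cone(\Phi(\sD^\R(S)))$, which is secured by the compactness of $\sD^\R(S)$ established at the outset of (i).
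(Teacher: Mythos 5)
The paper does not actually prove Theorem~\ref{theomainTchakaloffREAL}: it is quoted as a known result, with (i) attributed to Putinar \cite{Pu93} and (ii) to Tchakaloff \cite{Tchakaloff}, and Appendix~\ref{sec:AppA} only derives the complex analogues (Theorem~\ref{theomainTchakaloff}) from this real statement. Your proposal therefore goes beyond the paper by reconstructing proofs, and it does so along the standard lines: for (i), the GNS construction together with the joint spectral theorem is the classical operator-theoretic proof of Putinar's moment-problem statement, and your estimate $L(x_i^2p^2)\le N\,L(p^2)$, derived from $N-x_i^2=(N-\sum_j x_j^2)+\sum_{j\ne i}x_j^2\in\mathcal{M}^{\R}(S)$, is exactly what makes the multiplication operators well defined on the quotient and bounded; the localization step $g(M)\succeq 0\Rightarrow E(\{g<0\})=0$ via spectral projections is also correct (this route has the virtue of proving the moment statement directly, without passing through the Positivstellensatz, though your parenthetical Putinar-plus-Haviland alternative works too). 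For (ii), Hahn--Banach separation plus Carath\'eodory is the standard proof of the compactly supported Tchakaloff theorem, and it is the right level of generality here. Three small repairs are needed. First, the spectral theorem should be invoked after complexifying the real Hilbert space $H$. Second, if $S$ is uncountable --- which is the regime the paper actually needs, e.g.\ the sets $S^{G^\R}$ of Appendix~\ref{sec:AppA} --- the union $\bigcup_{g\in S}\{g<0\}$ of open null sets is null by a Lindel\"of (countable subcover) argument, not by countable subadditivity alone. Third, your closing claim that closedness of $\cone(\Phi(\sDr(S)))$ is ``secured by compactness'' is imprecise: the conic hull of a compact set need not be closed (a disk tangent to the origin is the standard counterexample); what saves the argument is that every $\Phi(a)$ has its degree-$0$ coordinate equal to $1$, so $0\notin\conv(\Phi(\sDr(S)))$, and the conic hull of a compact convex set avoiding the origin is closed. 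With these one-line fixes your argument is a complete and correct proof of the statement the paper merely cites.
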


We now  indicate how to derive Theorem \ref{theomainTchakaloff} from Theorem \ref{theomainTchakaloffREAL}. 
For this consider  $S\subseteq \C[\bx,\bbx]^h$ and a linear map $L:\C[\bx,\bbx]\to\C$. Assume $\calM(S)$ is Archimedean and $L\ge 0$ on $\calM(S)$.
We consider the  set $S_\Re\subseteq \R[\xr,\xim]$ of real polynomials defined via (\ref{eqSRe}) and the associated linear map $L^\R:\R[\xr,\xim]\to\R$ defined via (\ref{llrdef}).
By Lemma \ref{RCArch} the quadratic module $\calM^\R(S_\Re)$ is Archimedean and, by Lemma \ref{RCPosQuad}, $L^\R\ge 0$ on $\calM^\R(S_\Re)$.
Hence we can apply Theorem \ref{theomainTchakaloffREAL} to $S_\Re$ and $L^\R$.

By Theorem \ref{theomainTchakaloffREAL} (i), there exists a (real) measure $\mu^\R$ that is supported by $\sD^\R(S_\Re)$ and satisfies $L^\R(f)=\int fd\mu^\R$ for all $f\in\R[\xr,\xim]$.
Consider the (complex) measure $\mu$ defined by relation (\ref{eqmu}), which is therefore supported by the set $\sD(S)$.
We claim that $\mu$ is a representing measure for $L$. Indeed, for $p\in\C[\bx,\bbx]$, using (\ref{eqLLRe})  we have
$$L(p) = L^\R(p_\Re)+\i L^\R(p_\Im) = \int p_\Re d\mu^\R +\i \int p_\Im d\mu^\R = \int pd\mu.$$
This completes the proof of Theorem \ref{theomainTchakaloff} (i).
We now derive its part (ii). 

Fix an integer $k\in\N$. By Theorem \ref{theomainTchakaloffREAL} (ii), there exists $\wh L:\R[\xr,\xim]\to\R$ such that $\wh L(f)=L^\R(f)$ for all $f\in \R[\xr,\xim]_k$ and $\wh L=\sum_{\ell=1}^K\lambda_\ell L_{a^\ell}$ for some $K\in\N$, $\lambda_\ell>0$ and $a^\ell \in\sD^\R(S_\Re)$.
Define the complex linear map
$\widetilde L:\C[\bx,\bbx]\to\C$ by $\widetilde L(p) :=\wh L(p_\Re)+\i \wh L(p_\Im)$ for any $p\in\C[\bx,\bbx]$.
Then, in view of (\ref{eqLLRe}), we have $\widetilde L(p)=L(p)$ for any $p\in\C[\bx,\bbx]_k$.
For each $\ell\in [K]$ let $w^\ell$ be the complex vector such that $(w^\ell_\Re,w^\ell_\Im)=a^\ell$. Then each $w^\ell$ belongs to $\sD(S)$ and we have 
$$\widetilde L(p) = \wh L(p_\Re)+\i \wh L(p_\Im) = \sum_\ell \lambda_\ell (p_\Re(a^\ell) +\i p_\Im(a^\ell)) =\sum_\ell \lambda_\ell p(w^\ell),$$
which shows $\widetilde L=\sum_\ell\lambda_\ell L_{w^\ell},$ and thus concludes the proof of Theorem \ref{theomainTchakaloff} (ii).

\subsection{Deriving Theorem \ref{theomainmatrix} from its real analog }\label{AppProofCimpric}
In this section we prove the implication (ii) $\Longrightarrow$ (i) in Theorem \ref{theomainmatrix}  from its  real analog  in \cite{CimpricZalar}, which we restate below for convenience. 

\begin{theorem}\cite{CimpricZalar} \label{theomainmatrixREAL}
	Let $S\subseteq \R[\bx]$ be a set of polynomials  such that the quadratic module  $\calMr[](S)$ is Archimedean.
	Let $\calL: \R[\bx] \to \mathcal S^m$ be a matrix-valued linear functional  that is positive on $\calM^\R(S)$, i.e., 
	 $\calL(g f^2) \succeq 0$ for all $g \in S \cup \{1\} $ and $ f \in \R[\bx]$.
	 Then 
	there exists a matrix-valued measure $\mu$ that is supported on $\sDr(S)$ and takes values in the cone $\calS^{m}_+$ of $m \times m$ positive semidefinite matrices such that $\calL(f) =\int fd\mu$ for all $f\in \R[\bx].$
\end{theorem}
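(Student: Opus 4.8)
The plan is to use the Archimedean assumption to convert the algebraic positivity of $\calL$ into an analytic continuity estimate, extend $\calL$ to the continuous functions on the (compact) positivity domain, and then invoke a matrix-valued Riesz representation theorem. Throughout write $K := \sDr(S)$. Since the Archimedean hypothesis supplies $R - \sum_i x_i^2 \in \calM^\R(S)$ for some $R > 0$, and since every element of $\calM^\R(S)$ is a sum $\sum_j g_j f_j^2$ with $g_j \in S \cup \{1\}$ and is therefore nonnegative on $K$, the set $K$ lies in the ball of radius $\sqrt R$; being also closed, $K$ is compact. Note too that taking $g=f=1$ gives $\calL(1) \succeq 0$, and that by linearity the hypothesis $\calL(gf^2)\succeq 0$ upgrades to $\calL \succeq 0$ on all of $\calM^\R(S)$.

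First I would establish the key norm bound. Fix $p \in \R[\bx]$ and set $c := \max_{a \in K} |p(a)|$. For every $\varepsilon > 0$ the polynomials $c + \varepsilon \pm p$ are strictly positive on $K$, so by Putinar's Positivstellensatz \cite{Pu93} (applicable since $\calM^\R(S)$ is Archimedean) they both lie in $\calM^\R(S)$; applying $\calL$ gives $-(c+\varepsilon)\calL(1) \preceq \calL(p) \preceq (c+\varepsilon)\calL(1)$, and letting $\varepsilon \to 0$ yields $\|\calL(p)\|_{\mathrm{op}} \leq \|\calL(1)\|_{\mathrm{op}}\,\max_{a\in K}|p(a)|$. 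Thus $\calL$ is a bounded linear map from the polynomials, equipped with the supremum norm over $K$, into $\calS^m$.

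Next, since $K$ is compact the restrictions of polynomials form a subalgebra of $C(K)$ that separates points and contains the constants, hence is dense by Stone--Weierstrass. The bound above therefore lets $\calL$ extend uniquely to a bounded linear map $\widehat\calL : C(K) \to \calS^m$. I would then verify that $\widehat\calL$ is positive: for $h \in C(K)$ with $h \geq 0$ and any $\varepsilon > 0$, choose a polynomial $p$ with $\|p - (h+\varepsilon)\|_{\infty,K} < \varepsilon$, so $p > 0$ on $K$ and hence $\calL(p) \succeq 0$; passing to the limit gives $\widehat\calL(h+\varepsilon) \succeq 0$, and $\varepsilon \to 0$ gives $\widehat\calL(h) \succeq 0$. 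Finally I would apply the operator-valued Riesz representation theorem for positive maps $C(K)\to\calS^m$ on a compact space (as in \cite{hadwin81}) to obtain a regular $\calS^m_+$-valued Borel measure $\mu$ supported on $K$ with $\widehat\calL(h) = \int_K h\,d\mu$; restricting to polynomials gives $\calL(f) = \int_K f\,d\mu$ for all $f \in \R[\bx]$, which is exactly the claim.

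The two steps carrying the real content are, first, the passage from algebra to analysis via Putinar --- it is precisely strict positivity on the compact set $K$ together with the Archimedean hypothesis that turns $\max_{a\in K}|p(a)|$ into a genuine bound on $\|\calL(p)\|_{\mathrm{op}}$ --- and, second, the matrix-valued Riesz step, which I expect to be the main obstacle. There one must produce a single $\calS^m_+$-valued measure rather than merely the family of scalar positive measures $\mu_v$ representing $v^\top\widehat\calL(\cdot)\,v$ for $v \in \R^m$. One route is to recover the off-diagonal entries of $\mu$ from the $\mu_v$ by polarization and then check that the resulting matrix measure is well defined, $\calS^m_+$-valued (which follows from $\mu_v \geq 0$ for all $v$), and regular; alternatively, and more cleanly, one cites the operator-valued Riesz theorem directly, which is the option taken here.
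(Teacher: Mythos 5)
Your proposal is correct and takes essentially the same route as the argument the paper relies on: the paper does not prove \cref{theomainmatrixREAL} itself but cites \cite{CimpricZalar}, and the proof sketch it gives just before \cref{theomainmatrix} --- Putinar's Positivstellensatz \cite{Pu93}, a density argument passing from polynomials to continuous functions on the compact set $\sDr(S)$, and a Riesz representation theorem for positive operator-valued maps \cite{hadwin81} --- is exactly your chain of steps. The only cosmetic repairs needed are the degenerate case $\sDr(S)=\emptyset$ (where $-1\in\calM^\R(S)$, hence $\calM^\R(S)=\R[\bx]$, forcing $\calL=0$ so the zero measure works) and, in your positivity step, replacing the single approximant $p$ by a sequence $p_n>0$ on $\sDr(S)$ converging uniformly to $h+\varepsilon$ and invoking closedness of the cone $\calS^m_+$.
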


We now indicate how to derive the implication (ii) $\Longrightarrow$ (i) in Theorem  \ref{theomainmatrix} from Theorem \ref{theomainmatrixREAL}.
For this let $S\subseteq \C[\bx,\bbx]^h$ such that $\calM(S)$ is Archimedean and let $\calL:\C[\bx,\bbx]\to\calH^m$ which is Hermitian and satisfies $\calL(gp\overline p)\succeq 0$ for all $g\in S$ and $p\in\C[\bx,\bbx]$.
Then the set $S_\Re\subseteq \R[\xr,\xim]$ from (\ref{eqSRe}) has a Archimedean quadratic module $\calM^\R(S_\Re)$ by
\cref{RCArch}. 
Consider  the linear map
$\calL^\R:\R[\xr,\xim]\to\mathcal S^{2m}$ defined via (\ref{calLrdef}).
 Then, by
  \cref{calLgpp}, $\calLr$ is positive on $\calMr[](\Sre)$. Hence we can apply \cref{theomainmatrixREAL} and conclude that $\calLr$ has a representing measure $\mu^\R$, which is supported on $\sDr(\Sre)$ and  takes values in the cone $\calS^{2m}_+$.
	We will now construct a (complex) measure $\mu$, which represents $\calL$ and is supported on the set $\sD(S)$,  using the following two claims.
	
	\begin{claim} \label{calLrincalW}
	The map 	$\calLr$ takes  values in the set 
		$
		\calW := 
		\Big{\{} 
		\begin{bmatrix}  
		A & B^T \\
		B & C
		\end{bmatrix} 
		\in \calS^{2m}: \ A = C,\ B^T = -B
		\Big{\}}.
		$
	\end{claim}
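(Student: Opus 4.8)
The plan is to unfold the definition \cref{calLrdef} of $\calLr$ and reduce the statement to the elementary correspondence, already recorded around \cref{RCMats}, between Hermitian complex matrices and their real $2m \times 2m$ counterparts. The key point is that, for each $f$, the value $\calLr(f)$ is entirely determined by the single complex matrix $M := \calL\big(f\big(\frac{\bx+\bbx}{2}, \frac{\bx-\bbx}{2\i}\big)\big)$, so the claim amounts to understanding the symmetry pattern of $M^\R$ when $M$ is Hermitian.

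First I would fix $f \in \Rxrxi$ and set $p := f\big(\frac{\bx+\bbx}{2}, \frac{\bx-\bbx}{2\i}\big)$. As noted just after \cref{RCHpolybij}, performing this substitution on a real polynomial produces a Hermitian polynomial $p \in \Cxoxh$ (with $p_\Re = f$). Next I would invoke the observation made right after the definition of Hermitian matrix-valued functionals, namely that a Hermitian $\calL$ sends Hermitian polynomials to Hermitian matrices; hence $M = \calL(p)$ satisfies $M^* = M$. By the equivalence recalled around \cref{RCMats}, Hermiticity of $M$ is precisely the pair of conditions $\Re(M)^T = \Re(M)$ and $\Im(M)^T = -\Im(M)$.

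Finally, substituting into \cref{calLrdef} gives
$$
\calLr(f) = \begin{bmatrix} \Re(M) & -\Im(M) \\ \Im(M) & \Re(M) \end{bmatrix}.
$$
Writing this in the block form $\begin{bmatrix} A & B^T \\ B & C \end{bmatrix}$ forces $A = C = \Re(M)$, so the condition $A = C$ holds automatically, and $B = \Im(M)$, so $B^T = \Im(M)^T = -\Im(M) = -B$ by the antisymmetry of $\Im(M)$ established above. This last relation $-\Im(M) = \Im(M)^T$ is exactly the one already used right after \cref{calLrdef} to conclude that $\calLr$ takes symmetric values, so no new computation is needed; it now yields the additional structural constraints $A=C$ and $B^T=-B$ defining $\calW$. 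Hence $\calLr(f) \in \calW$, as claimed.

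I do not expect any genuine obstacle here: the entire content is Hermiticity bookkeeping, and the only points requiring care are the two preliminary facts — that the substitution yields a Hermitian $p$, and that a Hermitian $\calL$ preserves Hermiticity — both of which are already available in the preliminaries of this appendix.
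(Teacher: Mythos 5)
Your proposal is correct and follows essentially the same route as the paper: both read the block structure of $\calLr(f)$ directly off the definition \cref{calLrdef}, identify the diagonal blocks with $\Re(M)$ and the off-diagonal blocks with $\pm\Im(M)$, and derive $B^T=-B$ from the Hermiticity of $M=\calL\big(f\big(\frac{\bx+\bbx}{2},\frac{\bx-\bbx}{2\i}\big)\big)$, which in turn rests on the two preliminary facts you cite (the substitution yields a Hermitian polynomial, and a Hermitian $\calL$ maps Hermitian polynomials to Hermitian matrices). No gaps; this matches the paper's argument.
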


	\begin{proof}
		Since $\calLr$ takes values in $\calS^{2m}$ it has the following block-form
		$$
		\calL^\R
		=
		\begin{bmatrix}  
		\calL^\R_{11} & (\calL^\R_{21})^T  \\
		\calL^\R_{21} & \calL^\R_{22}
		\end{bmatrix},
		$$
		where $\calL^\R_{11}$ and $\calL^\R_{22}$ take values in $\calS^m$, $\calL^\R_{21}$ takes values in $\R^{m\times m}$ and,
by construction, 
		\begin{equation}\label{eqLa}
		\calL^\R_{11}(f) = \calL^\R_{22}(f) = \Re\big(\calL\big(f\big(\frac{\bx +\bbx}{2},\frac{\bx -\bbx}{2\i}\big)\big)\big),\  \ \ 
		\calL^\R_{21}(f) = \Im\big(\calL\big(f\big(\frac{\bx +\bbx}{2},\frac{\bx -\bbx}{2\i}\big)\big)\big)
		\end{equation}
		and $(\calL^\R_{21}(f))^T = -\Im(\calL(f(\frac{\bx +\bbx}{2},\frac{\bx -\bbx}{2\i})))$
		for any $f \in \Rxrxi$. Hence $(\calLr_{21})^T =  -\calLr_{21}$ and thus $\calL^\R$ takes values in $\calW$ as desired.
		\end{proof}

	\begin{claim} \label{muRTakesW}
		Without loss of generality we may assume  the measure $\mu^\R$ takes values in 
		 $\calW \cap \mathcal S^{2m}_+$.
	\end{claim}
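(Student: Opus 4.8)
The plan is to produce the required measure by \emph{symmetrizing} $\mu^\R$ with respect to the complex structure underlying the real/complex correspondence, exploiting the fact that, by \cref{calLrincalW}, the functional $\calLr$ already takes its values in $\calW$. In other words, I would average the matrix values of $\mu^\R$ over a group action that fixes $\calLr$ pointwise but projects arbitrary symmetric matrices into $\calW$.

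The first step is to recognize $\calW$ as a \emph{commutant}. Introduce the orthogonal matrix $J := \begin{bmatrix} 0 & -I_m \\ I_m & 0\end{bmatrix}$, which satisfies $J^{-1}=J^T=-J$ and $J^2=-I_{2m}$. A short direct computation shows that a symmetric matrix $M=\begin{bmatrix} A & B^T \\ B & C\end{bmatrix}\in\calS^{2m}$ satisfies $MJ=JM$ if and only if $A=C$ and $B^T=-B$; that is, $\calW = \{M\in\calS^{2m}: MJ=JM\}$. This is precisely the classical characterization of the real representations of complex Hermitian matrices, so it fits the setup of \cref{RCMats,RCMatsw}.

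The second step is to build an averaging projection onto $\calW$. Since $J^2=-I$ is central, conjugation by the cyclic group generated by $J$ collapses to the order-two action $M\mapsto JMJ^{-1}=JMJ^T$, and I would define $\Phi:\calS^{2m}\to\calS^{2m}$ by $\Phi(M):=\tfrac12\big(M+JMJ^T\big)$. I expect to verify three properties, each by a one-line computation: (i) $\Phi$ maps into $\calW$, since $J\Phi(M)J^{-1}=\tfrac12(JMJ^{-1}+M)=\Phi(M)$ using $J^2=-I$; (ii) $\Phi$ fixes $\calW$ pointwise, since $MJ=JM$ forces $JMJ^T=M$; and (iii) $\Phi$ preserves the cone $\calS^{2m}_+$, because $JMJ^T$ is a congruence of $M$ by the invertible matrix $J$ and a sum of positive semidefinite matrices is positive semidefinite. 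Hence $\Phi$ maps $\calS^{2m}_+$ into $\calW\cap\calS^{2m}_+$.

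The final step is to push the measure forward: set $\tilde\mu^\R := \Phi\circ\mu^\R$, i.e.\ $\tilde\mu^\R(E):=\Phi(\mu^\R(E))$ for measurable $E$. By (iii) this is a positive $\calW\cap\calS^{2m}_+$-valued measure, still supported on $\sDr(\Sre)$ since only the matrix values are altered. As $\Phi$ is a fixed linear endomorphism of the finite-dimensional space $\calS^{2m}$, it commutes with integration, so for every $f\in\R[\xr,\xim]$ one obtains $\int f\,d\tilde\mu^\R=\Phi\big(\int f\,d\mu^\R\big)=\Phi(\calLr(f))=\calLr(f)$, where the last equality combines property (ii) with \cref{calLrincalW}. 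Thus $\tilde\mu^\R$ still represents $\calLr$ and takes values in $\calW\cap\calS^{2m}_+$, as desired. I do not expect a genuine obstacle here beyond bookkeeping, as this is a group-averaging argument; the only point requiring care is that the averaging must simultaneously preserve positive semidefiniteness and the representing identity, which works precisely because $J$ is orthogonal (so congruence by $J$ keeps the PSD cone invariant) and because $\calLr$ already lies in the fixed-point set $\calW$, rendering $\Phi$ invisible to $\calLr$ while projecting the measure values into $\calW$.
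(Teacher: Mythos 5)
Your proposal is correct and is essentially the paper's own proof: the paper defines $\mu' := \tfrac12\mu^\R + \tfrac12 J\mu^\R J^T$ with exactly your matrix $J$, verifies membership in $\calW$ and positivity via the same congruence argument, and uses \cref{calLrincalW} in the same way to check that the averaged measure still represents $\calLr$. You merely phrase the construction more conceptually (as a commutant and an averaging projection) where the paper works with explicit block computations.
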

	
	\begin{proof}
		We can write the measure  $\mu^\R$ in block-form as   
		$$
		\mu^\R=\begin{bmatrix}  
		\mu^\R_{11} & (\mu^\R_{21})^T \\
		\mu^\R_{21} & \mu^\R_{22}
		\end{bmatrix},
		$$
		where each block is a measure taking its values in $\R^{m\times m}$.
		Then we can define the following new matrix-valued measure
		$$
		\mu' :=
		\frac{1}{2}
		\begin{bmatrix}  
		\mu^\R_{11} + \mu^\R_{22} & -(\mu^\R_{21}-(\mu^\R_{21})^T) \\
		(\mu^\R_{21}-(\mu^\R_{21})^T) &  \mu^\R_{11} + \mu^\R_{22}
		\end{bmatrix}
		 =: 
		 \begin{bmatrix} \mu'_{11} & -(\mu'_{21})^T \\ \mu'_{21} & \mu'_{11}\end{bmatrix}.
		 		$$
		First, by construction, $\mu'$ takes its values in the set $\calW$. Second, $\mu'$ takes its values in  $\calS_+^{2m}$. Indeed, by Theorem \ref{theomainmatrixREAL}, $\mu^\R$ takes values in $\calS_+^{2m}$ and we have
		$$
		\mu' = 
		\frac{1}{2}
		\begin{bmatrix}  
		0 & -I_m \\
		I_m  & 0
		\end{bmatrix}
		\mu^\R
		\begin{bmatrix}  
		0 & I_m \\
		-I_m  & 0
		\end{bmatrix}
		 + 
		 \frac{1}{2}\mu^\R.
		$$
		Finally, $\mu'$ also represents $\calLr$. Indeed, for any $f \in \Rxrxi$ we have $\calLr_{11}(f) = \calLr_{22}(f)$ and $-\calLr_{21}(f)= (\calLr_{21}(f))^T$  by \cref{calLrincalW}. This implies		
		\begin{align*}
		\calLr_{11}(f) &=  \frac{1}{2}(\calLr_{11}(f) + \calLr_{22}(f)) = {1\over 2} \int f (d\mu^\R_{11} + d\mu^\R_{22}) =\int fd\mu'_{11},\\
		\calLr_{21}(f) &=  \frac{1}{2}(\calLr_{21}(f) - (\calLr_{21}(f))^T)  ={1\over 2} \int f (d\mu^\R_{21} -  d(\mu^\R_{21})^T)= \int f d\mu'_{21},
		\end{align*}
and thus 		$\calLr(f) = \int f d\mu'.$
Therefore we may replace the measure $\mu^\R$ by $\mu'$, which shows the claim.
	\end{proof}

	We now define the  complex measure $\mu$ by setting
	\begin{equation} \label{mucomp}
	d\mu := d\mu^\R_{11} \circ\phi  + \i d\mu^\R_{21}\circ\phi,
	\end{equation}
	where $\phi$ is the complex/real bijection in \cref{CRbij}. So $\mu$ 	takes values in $\C^{m\times m}$.
	 	As shown above in Claim \ref{muRTakesW},   $\mu^\R$ takes values in the set $\calW \cap \calS^{2m}_+$. Hence, in view of  \cref{RCMats}, we can conclude that $\mu$ takes its values in $\calH^m_+$. 
	In addition, as $\mu^\R$ is supported by $\sDr(\Sre)$, it follows that $\mu$ is supported by $\sD(S)$.		
	Finally, we verify that $\mu$ represents $\calL$. Indeed, for any $p\in \Cxox$, using (\ref{eqLa}) we obtain  
		\begin{align*}
	\calL(p) &= \calL\big(p_\Re\big(\frac{\bx +\bbx}{2},\frac{\bx -\bbx}{2i}\big)\big) + \i \calL\big(p_\Im\big(\frac{\bx +\bbx}{2},\frac{\bx -\bbx}{2i}\big)\big) \\ 
	&= ( \calLr_{11}(p_\Re(\xr,\xim)) + \i \calLr_{21}(p_\Re(\xr,\xim)) )
	+  \i \big( \calLr_{11}(p_\Im(\xr,\xim) ) + \i\calLr_{21}(p_\Im(\xr,\xim)) \big) \\
	&= 
	\int p_\Re  d\mu^\R_{11}
	+ \i \int p_\Re  d\mu^\R_{21}
	+ \i \int p_\Im d\mu^\R_{11}
	- \int p_\Im d\mu^\R_{21} \\
	&=
	\int (p_\Re + \i p_\Im)  d\mu^\R_{11}
	+\i  \int (p_\Re+  \i p_\Im)  d\mu^\R_{21}\\
	&=
	\int (p_\Re + \i p_\Im)  (d\mu^\R_{11} +  \i d\mu^\R_{21})\\
	&= \int p d\mu.
	\end{align*}
This concludes the proof of the implication (ii) $\Longrightarrow$ (i) in 	Theorem  \ref{theomainmatrix}.

\end{document}